\newcommand{\E}{\mathbb{E}}
\newcommand{\F}{\mathcal{F}}
\renewcommand{\H}{\mathbb{H}}
\newcommand{\N}{\mathbb{N}}
\renewcommand{\P}{\mathbb{P}}
\newcommand{\R}{\mathbb{R}}
\newcommand{\Z}{\mathbb{Z}}
\newcommand{\cA}{\mathcal{A}}
\newcommand{\cC}{\mathcal{C}}
\newcommand{\cF}{\mathcal{F}}
\newcommand{\cG}{\mathcal{G}}
\newcommand{\cH}{\mathcal{H}}
\newcommand{\cQ}{\mathcal{Q}}
\newcommand{\cR}{\mathcal{R}}
\newcommand{\cS}{\mathcal{S}}
\newcommand{\cT}{\mathcal{T}}
\newcommand{\cU}{\mathcal{U}}
\newcommand{\cX}{\mathcal{X}}
\DeclareMathOperator{\trace}{Tr}
\DeclareMathOperator{\e}{e}
\newcommand{\dd}{\,\mathrm{d}}
\newcommand{\eps}{\varepsilon}
\newtheorem{theorem}{Theorem}[section]
\newtheorem{definition}[theorem]{Definition}
\newtheorem{lemma}[theorem]{Lemma}
\newtheorem{proposition}[theorem]{Proposition}
\newtheorem{assumption}[theorem]{Assumption}
\theoremstyle{remark}
\newtheorem{remark}[theorem]{Remark}
\numberwithin{equation}{section}
\renewcommand{\leq}{\leqslant}
\renewcommand{\le}{\leqslant}
\renewcommand{\geq}{\geqslant}
\def\<{\langle}
\def\>{\rangle}
\def\eps{\varepsilon}
\title{Edwards-Wilkinson limit for a stochastic advection-diffusion PDE}
\author{Sotirios Kotitsas}
\thanks{Dipartimento di Matematica, Universit\`a di Pisa,
Largo Bruno Pontecorvo 5, 56127 Pisa, Italy.
\href{mailto:sotirios.kotitsas@dm.unipi.it}{sotirios.kotitsas@dm.unipi.it}}
\author{Dejun Luo}
\thanks{SKLMS, Academy of Mathematics and Systems Science, Chinese Academy of Sciences,
Beijing 100190, China; and School of Mathematical Sciences, University of Chinese Academy of Sciences,
Beijing 100049, China. \href{mailto:luodj@amss.ac.cn}{luodj@amss.ac.cn}}
\author{Mario Maurelli}
\thanks{Dipartimento di Matematica, Universit\`a di Pisa,
Largo Bruno Pontecorvo 5, 56127 Pisa, Italy.
\href{mailto:mario.maurelli@unipi.it}{mario.maurelli@unipi.it}} 
\date{}
\begin{document}

\begin{abstract}
We consider a diffusion in a Gaussian random environment that is white in time and study the large-scale behavior of the quenched density with respect to the Lebesgue measure. We show that under diffusive rescaling, the fluctuations of the density converge to a Gaussian limit, described by an additive stochastic heat equation. In the case where the environment is divergence-free our result can be interpreted as computing the scaling limit of the first-order correction to the quenched Central Limit Theorem.
\end{abstract}

\maketitle
\tableofcontents

\section{Introduction}
We are interested in the large-scale behavior of the following SPDE
\begin{align}\label{eq:transport}
    \partial_t \theta + \nabla\cdot(V\circ\theta) = \kappa \Delta \theta,
\end{align}
where \( V(t,x) \) is a vector Gaussian noise and $\circ$ denotes Stratonovich integration. The noise is centered, white in time, and with the following correlation structure
\begin{equation}\label{eq:Noise_corr}
     \E[V_i(t,x)V_j(s,y)]=\delta(t-s)Q_{i,j}(x-y),
\end{equation}
for an appropriate function \(Q:\R^d\rightarrow\R^{d\times d}\). Here $\delta$ means the Dirac delta function. \par

Our main interest in \eqref{eq:transport} stems from the fact that we can interpret the solution as the density of a diffusion in a random environment. Specifically, we can consider the following diffusion 
\begin{equation}\label{eq:Langragian_view}
    \dd X_t = V(t,X_t) \dd t +\sqrt{2\kappa} \dd W_t,
\end{equation}
where $W$ is a Brownian motion independent of $V$. Then, for almost every realization of \(V\), \( \theta(t,x) \) is the density of \( X_t \) with respect to the Lebesgue measure.\par

Since the vector field \( V(t,x) \) is white in time, it is not straightforward to give meaning to \eqref{eq:Langragian_view}, and therefore, the previous observation is purely formal. Nevertheless, it is possible to make this rigorous by setting up a solution theory for this SDE. This is done, for example, using Kunita's theory of stochastic flows  \cite{Kunita90}, or the theory developed in \cite{Ljan}, in the case where \( V \) has a rough correlation function. See also \cite{Dunlap-Gu} for a streamlined version of Kunita's arguments. This solution theory gives a meaning to both \eqref{eq:transport} and \eqref{eq:Langragian_view}, and makes the connection between them rigorous.\par

As such, studying the large-scale behavior of \eqref{eq:transport}, yields `local' information for a diffusion in a (white-in-time) random environment. This can be done in different scaling regimes. Specifically, \cite{PhysPaper1} distinguishes three different scaling regimes: the diffusive regime, the moderate deviation regime, and the large deviation regime. These correspond to studying the diffusion \eqref{eq:Langragian_view} under a diffusive rescaling, under a moderate tilting of the diffusion, or the large deviation behavior, respectively. Moreover, in  \cite{PhysPaper1}, the authors point out a very interesting connection to the KPZ equation and the KPZ universality class \cite{KPZref}. These conjectures were recently proved rigorously in \cite{Parekh1,Parekh2} in \(d=1\), and for the moderate deviation regime\footnote{The model studied in \cite{Parekh1,Parekh2} is a discrete analog of \eqref{eq:transport}. However, their methods also apply to the continuous case, as is pointed out in \cite{Parekh2}, Section 6.3.}. We also refer to \cite{Gu-Corwin-paper} for a study of a related, integrable model, under the large deviation regime.\par

Here, we are interested in the behavior of \eqref{eq:transport} in the diffusive scaling regime. In \cite{PhysPaper1}, it is predicted that, in this regime, the fluctuations of \(\theta(t,x)\), viewed as a random field, fall into the Edwards-Wilkinson universality class. To this end, we point out again the reference \cite{Dunlap-Gu}. There, the authors studied the point-wise behavior of \(\theta(t,x)\) and proved that 
\[
    \sup_{x\in\R^d}\E\big[ |n^{d} \theta(n^2 t, n x)- q_t(x)\Psi(n^2 t, n x)|^2 \big]\rightarrow0,
\]
as \(n\rightarrow\infty\), where \( q_t(x) \) is the standard \(d-\)dimensional heat kernel with a specific diffusivity and \( \Psi(t,x) \) is an appropriate space-time stationary random field (see \cite[Section 3]{Dunlap-Gu} for more details).  Instead, what we are interested in is the behavior of
\begin{equation}\label{eq:fluctuations_quant}
    \cX_n(t,x):=n^{d/2}(\theta^n(t,x)-\E[\theta^n(t,x)]),
\end{equation}
where \(\theta^n(t,x):=n^{d}\theta( n^2 t, n x )\), viewed as a random element of a Sobolev space with negative order. Our main result shows that \( \cX_n(t,x)\) converges in distribution to an explicit Gaussian limit, confirming the predictions of \cite{PhysPaper1}, see \textbf{Theorem \ref{thm:Main_thm}} for the precise statement. We also mention \cite{rel_paper_1, rel_paper_3, rel_paper_2} for related results on discrete models.\par
Clearly \( n^{d} \theta(n^2 t, n x) \) corresponds to the density of \( n^{-1}X_{n^2 t}\) and therefore, the result of \cite{Dunlap-Gu} corresponds to a quenched local central limit theorem for the diffusion, with a random correction due to the presence of \(\Psi\). This result yields a quenched invariance principle as well, i.e., for almost all realizations of \( V(t,x) \), \( (n^{-1}X_{n^2 t})_{t\in[0,T]} \) converges to a Brownian motion with an effective diffusivity \( D_{\rm eff} \).\par
In this context, \( \cX_n(t,x) \) can be seen as the next order correction to this invariance principle. Formally, we can write
\[
    \E[g(n^{-1}X_{n^2 t})|V]= \E[g(D_{\rm eff}B_t)]+\cR_n,
\]
where \( \cR_n \) denotes a (mean zero) random term that goes to \(0\), almost surely as \(n\rightarrow0\). Our main result is a central limit theorem for this error term (see \textbf{Remark \ref{Rem:CLT_Correction}} for more details).\par
Finally, we point out that by taking the noise \( V(t,x) \) in \eqref{eq:transport} to be a vector space-time white noise, the equation is a singular SPDE, and as such it does not make any sense\footnote{When \(V(t,x)\) has the regularity of the spacetime white noise the product \( \nabla\cdot (\theta V)\) does not make sense. }. Even worse, a formal computation, using the scaling properties of the space-time white noise, shows that it is scaling supercritical for all \( d\geq1 \), which means the theories of regularity structures \cite{Hairer} or paracontrolled distributions \cite{GIP15} cannot be used to make sense of the equation. Here, the noises we consider have better regularity, but in our scaling regime, they converge to the spacetime white noise. As such, our result can also be seen as studying the behavior of a supercritical SPDE. In fact, the  Gaussian fluctuations of \eqref{eq:fluctuations_quant} that we prove here are analogous to recent results regarding fluctuations of supercritical SPDEs (see \cite{mSHE1, mSHE2, KPZ, Burgers}).\par

We end this introduction by describing the structure of the rest of the paper. In \textbf{Section \ref{Sec:setup_assum}} we set up the solution theory for \eqref{eq:transport} and state our assumptions. We present our main result and briefly describe our methods in \textbf{Section \ref{subs-main-result}}. In \textbf{Section \ref{Sec:Cor_Fun_estimates}} we collect estimates on the correlation functions of the model and prove some a priori estimates for \eqref{eq:transport}. In \textbf{Section \ref{Sec:Proof_compr}} we prove our main result, while in \textbf{Section \ref{Sec:Proof_inc_compr}} we show that our result still holds under weaker assumptions on the correlation function of the noise, if we assume that the latter is divergence-free. 

\subsection{Notation}
\begin{itemize}
    \item For \(x\in \R^d\), we write \(x^\ast\) as the transposition of \(x\). The notation \(|x|\) stands for the usual Euclidean norm while $\langle x \rangle= (1+|x|^2)^{1/2}$.
    \item Let $\cS$ be the space of Schwartz test functions and denote the Fourier transform of a function $f:\R^d\to \R^m$ as
            \[
                \widehat{f}(\xi)= \cF(f)(\xi) = \frac1{(2\pi)^{d/2}} \int_{\R^d} {\rm e}^{-{\rm i} \xi\cdot x} f(x) \dd x, \quad \xi\in \R^d. 
            \]
        We write as usual $L^p_x= L^p(\R^d)$ the Lebesgue spaces with norm $\|\cdot \|_p= \|\cdot \|_{L^p_x},\, p\ge 1$. For $\alpha\in \R$, the notation $H^\alpha_x= H^\alpha(\R^d)$ stands for the usual inhomogeneous Sobolev space on $\R^d$, with the norm
           \[ 
                \|f\|_{H^\alpha_x}= \bigg(\int_{\R^d} \langle \xi\rangle^{2\alpha} |\widehat f(\xi)|^2 \dd\xi \bigg)^{1/2}, 
           \]
        while $\dot H^\alpha_x$ denotes the homogeneous Sobolev space where the norm is defined by replacing $\langle \xi\rangle^{2\alpha}$ with $|\xi|^{2\alpha}$. We shall adopt the same notations for spaces of vector fields on $\R^d$.
    \item For a function \( f:[0,T]\times \R^d\rightarrow\R^m\), we will use interchangeably the notation \( f_t\) and \( f(t)\) to denote the map \( x\rightarrow f(t,x)\).
    \item Given $T>0$ and $p,q\ge 1$, we denote $L^p_t L^q_x$ for the time-dependent space $L^p([0,T], L^q(\R^d))$; similarly, $C_t L^q_x$ and $C_t H^\alpha_x$ are abbreviations of $C([0,T], L^q(\R^d))$ and $C([0,T], H^\alpha(\R^d))$, respectively. Sometimes, we replace the subscript $t$ by $T$ to stress the length of the time interval $[0,T]$.
    \item We write $a\lesssim b$ to mean that there is some unimportant constant $C>0$ such that $a\le Cb$; to emphasize the dependence of $C$ on some parameters $d, \kappa$, we use the notation $a\lesssim_{d,\kappa} b$. 
    \item We will denote by \(q_t(x)=\frac{1}{(2\pi t)^{d/2}}e^{-\frac{|x|^2}{2t}}\) the standard \(d-\)dimensional heat kernel; the notation $I_d$ stands for the $d\times d$ unit matrix.
    \item We will make use of the notation \( x_{1:p}=(x_1,\dots,x_p)\) and for a function \(g:\R^d\rightarrow\R\), we define \(g^{\otimes p}:\R^{pd}\rightarrow\R\), \(g^{\otimes p}(x_{1:p}):=g(x_1)g(x_2)\dots g(x_p)\).
\end{itemize}

\subsection{The Setup and Assumptions}\label{Sec:setup_assum}
As mentioned in the introduction, we are interested in the following SPDE:
\[
    \partial_t \theta + \nabla\cdot(V\circ\theta) = \kappa \Delta \theta,
\]
where $\circ$ denotes Stratonovich integration. Before stating our main result, we first need to give a precise meaning to \eqref{eq:transport}, and to prove that this is well-posed. To do this, we will first define the noise term $V$, and write down an appropriate representation. Then we use this representation to write \eqref{eq:transport} in It\^o form, leading us to a natural notion of solution to \eqref{eq:transport}, for which we can prove existence and uniqueness. We believe that the details of these three steps are standard. Nevertheless, we write them here for the convenience of the reader.\par
Recall the correlation function of the noise in \eqref{eq:Noise_corr}.  We consider two cases: when \( Q \) is divergence-free (the incompressible case) and when \( Q \) has possibly non-zero divergence (the compressible case). For the incompressible case, we assume the following

\begin{assumption}\label{assump-covariance}
The covariance function $Q$ has a Fourier transform given by
  \begin{equation}\label{covar-Fourier}
  \widehat{Q}(\xi)= g(\xi) \bigg(I_{d} - \frac{\xi\xi^\ast}{|\xi|^2} \bigg),
  \end{equation}
where $g(\xi)= g(|\xi|)$ is a continuous nonnegative radial function satisfying $g\in (L^1\cap L^\infty)(\R^d)$. It is easy to show that $Q(0)= 2\nu I_{d}$ for some $\nu>0$.
\end{assumption}

Observe that the matrix appearing in the right-hand side of \eqref{covar-Fourier} is the projection to the subspace orthogonal to \(\xi\), so that \(Q\) is indeed divergence-free. From \eqref{covar-Fourier} one can deduce that $Q(x)$ is a symmetric matrix for any $x\in \R^d$. As $Q$ is a covariance function, it holds $Q(-x)= Q(x)^\ast =Q(x)$, i.e. $Q$ is even. \par 

If we do not wish to assume that \(Q\) is divergence-free, we instead put a stronger assumption (in terms of regularity).

\begin{assumption}\label{Assump_compressible}
    The matrix \( Q \) is smooth, even and compactly supported, such that $Q(0)= 2\nu I_{d}$.
\end{assumption}

Rigorously, one usually interprets $V$ as a cylindrical Wiener process. Specifically, we define the Hilbert space $\H$ as the completion of $C^{\infty}_c(\R\times\R^d;\R^d)$ under the following inner product
\[\langle\mathbf{f},\mathbf{g}\rangle_{\H}:=\int_\R\int_{\R^d\times\R^d}\mathbf{f}(t,x)\cdot Q(x-y)\mathbf{g}(t,y)\dd x\dd y\dd t.\]
Then over a filtered probability space $(\Omega,\cA,(\cF_t)_{t\geq0},\P)$ one views the noise $V$, as a mean zero Gaussian process $(V(\mathbf{h}))_{h\in\H}$, with covariance function given by the inner product on $\H$.\par
For our purposes, however, we will need a more refined representation of the noise. As such we interpret $V$ as the (distributional) time derivative of an appropriate $\cQ-$Wiener process on $L^2(\R^d;\R^d)$, denoted by $W_\cQ$. Here, the operator $\cQ$ is given by
\[
    \cQ f(x)=\int_{\R^d}Q(x-y)f(y)\dd y,
\]
where $f\in C^{\infty}(\R^d;\R^d)$. We refer to \cite{SPDEbook} for standard facts about Wiener processes on Hilbert spaces.\par
Observe that the operator $\cQ$ acts as a Fourier multiplier. Furthermore, since $\cQ$ is the correlation of the noise \( V(t,x) \) and satisfies either \textbf{Assumption \ref{assump-covariance}} or \textbf{Assumption \ref{Assump_compressible}}, \( \widehat{Q}(\xi) \) is positive definite for all \(\xi\in\R^d\). This implies that $\cQ^{\alpha}$ is a well-defined Fourier multiplier operator, for all $\alpha\in\R$. This allows us to define a Gaussian measure with covariance operator $\cQ$ and with Cameron-Martin space the Hilbert space $\cH:=\cQ^{1/2}L^2(\R^d;\R^d)$, equipped with the inner product
\[
    \langle \mathbf{f},\mathbf{g}\rangle:=\int_{\R^d}\cQ^{-1/2}\mathbf{f}(x)\cdot\cQ^{-1/2}\mathbf{g}(x)\dd x.
\]
It can be proved that the space $\cH$ consists of continuous, bounded vector fields. We refer to \cite[Lemma 2.2]{GalLuo-weak} and the discussion below for more details. We also record the following lemma from the same paper, see Lemma $2.3$ therein\footnote{Strictly speaking, in \cite{GalLuo-weak} the authors consider only the case where \textbf{Assumption \ref{assump-covariance}} holds, but it is easy to see that similar arguments can be used to establish the same results under \textbf{Assumption \ref{Assump_compressible}}.}:

\begin{lemma}\label{lem-Q-series}
    Let $\{\sigma_k\}_{k\in\N}$, be any orthonormal basis of $\cH$, consisting of smooth vector fields. Then 
        \[
            Q(x-y)=\sum_{n\in\N}\sigma_k(x) \sigma_k(y)^\ast,
        \]
    where the series converges absolutely and uniformly on compact sets. If \( Q \) is divergence-free, then \(\sigma_k\) is also divergence-free, for all \( k\in\N\).
\end{lemma}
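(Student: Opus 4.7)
The plan is to identify $\cH$ as the reproducing kernel Hilbert space whose reproducing kernel is $(x,y)\mapsto Q(x-y)$, and then to read off the stated series as the Mercer-type expansion of this kernel in any orthonormal basis.

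For $y,v\in\R^d$, set $K_{y,v}(x):=Q(x-y)v$. The first step is to verify that $K_{y,v}\in\cH$ and enjoys the reproducing property $\langle K_{y,v},f\rangle_\cH = v\cdot f(y)$ for every $f\in\cH$. Since $\cQ$ acts as a Fourier multiplier by a constant multiple of $\widehat Q(\xi)$, passing to Fourier and applying Plancherel reduces the reproducing identity to the formal computation $\widehat Q(\xi)^{1/2}v\cdot \widehat Q(\xi)^{-1/2}\widehat f(\xi) = v\cdot \widehat f(\xi)$, which is valid on the range of $\widehat Q(\xi)^{1/2}$ (all of $\R^d$ under \textbf{Assumption \ref{Assump_compressible}}, and $\{\xi\}^\perp$ under \textbf{Assumption \ref{assump-covariance}}). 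Once this is in place, $\|K_{y,v}\|_\cH^2 = v\cdot Q(0)v = 2\nu|v|^2$.

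The second step is Parseval applied to the expansion of $K_{x,v}$ in the given ONB $\{\sigma_k\}$:
\[
\sum_k (v\cdot\sigma_k(x))(\sigma_k(y)\cdot w) = \sum_k \langle K_{x,v},\sigma_k\rangle_\cH\,\overline{\langle K_{y,w},\sigma_k\rangle_\cH} = \langle K_{x,v}, K_{y,w}\rangle_\cH = v^T Q(x-y)\, w
\]
for every $v,w\in\R^d$, where the last equality uses the reproducing property together with the symmetry $Q(-z)=Q(z)$ (which holds because $\widehat Q$ is real and even under either assumption). Letting $v,w$ range over the standard basis yields the stated matrix identity entrywise, at least pointwise.

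The third step upgrades the convergence. Setting $x=y$ and $v=w=e_i$ in the display above gives $\sum_k(\sigma_k^i(x))^2 = 2\nu$ for every $x\in\R^d$ and every component $i$. The partial sums are continuous, non-decreasing in $N$, and converge pointwise to the continuous constant $2\nu$, so Dini's theorem produces uniform convergence on every compact set. Combined with Cauchy--Schwarz,
\[
\sum_{k>N}|\sigma_k^i(x)\sigma_k^j(y)| \leq \bigg(\sum_{k>N}|\sigma_k^i(x)|^2\bigg)^{1/2}\bigg(\sum_{k>N}|\sigma_k^j(y)|^2\bigg)^{1/2},
\]
this turns uniform control on the diagonal into absolute and locally uniform convergence of the full series on $\R^d\times\R^d$. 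Finally, in the divergence-free case one has $\widehat Q(\xi)\xi=0$, hence $\widehat Q(\xi)^{1/2}\xi = 0$, so $\widehat{\cQ^{1/2}f}(\xi)\cdot\xi = 0$ for every $f\in L^2$; thus every element of $\cH=\cQ^{1/2}L^2$, and in particular each $\sigma_k$, is divergence-free. The main delicate point is making the Fourier manipulation in the first step rigorous in the incompressible case, where $\widehat Q(\xi)$ is only invertible on $\{\xi\}^\perp$; everything else reduces to Parseval, Dini, and Cauchy--Schwarz.
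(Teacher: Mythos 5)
The paper does not prove this lemma itself: it is quoted from \cite{GalLuo-weak} (Lemma 2.3 there), and the argument given in that reference is precisely the reproducing-kernel computation you carry out. Your proof is correct and is essentially that standard route --- the reproducing property $\langle K_{y,v},f\rangle_{\cH}=v\cdot f(y)$, Parseval in the orthonormal basis to get $\sum_k (v\cdot\sigma_k(x))(w\cdot\sigma_k(y))=v^TQ(x-y)w$, and Dini plus Cauchy--Schwarz on the diagonal for the absolute, locally uniform convergence --- and you correctly identify the one point needing care, namely that under Assumption \ref{assump-covariance} the symbol $\widehat Q(\xi)^{-1/2}$ must be interpreted as a pseudo-inverse acting on the range $\{\xi\}^{\perp}$ of $\widehat Q(\xi)^{1/2}$, which is also exactly where the divergence-free claim for the $\sigma_k$ comes from.
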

Moreover, we have the following representation of the Fourier transform of \(Q\).

\begin{proposition}\label{prop:sum-fourier-sigma}
    Under Assumption \ref{assump-covariance} or \ref{Assump_compressible}, the following identity holds in the sense of distribution:
    \begin{equation*}
        \sum_{k\in\N} \widehat{\sigma}_k(\xi) \overline{\widehat{\sigma}_k(\eta)^\ast} = \widehat{Q}(\xi) \delta(\xi-\eta), \quad \xi, \eta\in \R^d,
    \end{equation*}
where the overline means complex conjugate, and $\delta$ is the Dirac delta.
\end{proposition}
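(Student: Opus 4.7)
The plan is to take Fourier transforms in both variables $x,y$ of the pointwise identity $Q(x-y)=\sum_k \sigma_k(x)\sigma_k(y)^T$ supplied by \Cref{lem-Q-series}. Since the claimed identity lives in $\cD'(\R^{2d})$, it suffices to test against tensor test functions of the form $\phi(\xi)\overline{\psi(\eta)}$ with $\phi,\psi\in\cS(\R^d)$ (which span a dense subspace of $\cS(\R^{2d})$), and to prove the identity componentwise for each pair $(i,j)\in\{1,\dots,d\}^2$.

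For such a test function, the defining property of the distributional Fourier transform (equivalently Parseval) yields
\begin{equation*}
\int (\widehat{\sigma}_k)_i(\xi)\,\phi(\xi)\,d\xi \;=\; \int (\sigma_k)_i(x)\,\widehat{\phi}(x)\,dx,
\end{equation*}
and analogously for the $j$-th component. Using that $\sigma_k$ is real-valued, the conjugation in $\overline{(\widehat{\sigma}_k)_j(\eta)}$ is equivalent to a sign flip $\eta\mapsto-\eta$; this is precisely what converts the natural Fourier transform of a function of $x-y$ (which produces $\delta(\xi+\eta)$) into one producing $\delta(\xi-\eta)$. After exchanging the sum with the integrals in $x$ and $y$, \Cref{lem-Q-series} collapses $\sum_k (\sigma_k)_i(x)(\sigma_k)_j(y)$ into $Q_{ij}(x-y)$, and a short computation via the convolution theorem (or direct Plancherel) matches the resulting expression with $\int \widehat{Q}_{ij}(\xi)\,\phi(\xi)\,\overline{\psi(\xi)}\,d\xi$, which is exactly the pairing of the right-hand side.

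The one step that requires genuine justification is the interchange of $\sum_k$ with the double integral. I would handle it by a Cauchy--Schwarz bound: evaluating \Cref{lem-Q-series} at $x=y$ gives $\sum_k (\sigma_k)_i(x)^2 = Q_{ii}(0) = 2\nu$ uniformly in $x$, so
\begin{equation*}
\sum_k \big|(\sigma_k)_i(x)\,(\sigma_k)_j(y)\big| \;\leq\; \sqrt{Q_{ii}(0)\,Q_{jj}(0)} \;=\; 2\nu,
\end{equation*}
which is integrable against $\widehat{\phi}(x)\,\overline{\widehat{\psi}(y)}\in\cS(\R^{2d})$; Fubini then legitimates the swap. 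Beyond this estimate, the remaining work is pure bookkeeping of Fourier conventions and of the complex conjugation, needed to land on the delta $\delta(\xi-\eta)$ with the correct sign. I expect no further obstacle: the proposition is essentially a distributional restatement of \Cref{lem-Q-series} on the Fourier side.
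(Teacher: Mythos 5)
Your proposal is correct and follows essentially the same route as the paper: pair against tensor test functions, use Parseval to pass to physical space, invoke Lemma~\ref{lem-Q-series} to identify the sum with $Q(x-y)$, and compute the Fourier transform of the difference kernel. Your Cauchy--Schwarz bound $\sum_k \bigl|(\sigma_k)_i(x)(\sigma_k)_j(y)\bigr| \le 2\nu$ in fact supplies the domination needed to pass to the limit over all of $\R^{2d}$, a point the paper's appeal to uniform convergence on compact sets leaves implicit.
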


\begin{proof}
    Let $\phi, \psi\in \cS(\R^d;\R^d)$. Then
    \begin{equation*}
        \begin{aligned}
            &\lim_{N\to\infty}  \iint_{\R^d\times\R^d}  \phi(\xi)^\ast \Bigg(\sum_{k=0}^N\widehat{\sigma}_k(\xi) \overline{\widehat{\sigma}_k(\eta)^\ast}\Bigg) \overline{\psi(\eta)} ~ \dd \xi \dd \eta\\
            &= \lim_{N\to\infty}  \iint_{\R^d\times\R^d}  \widehat{\phi}(x)^\ast \Bigg( \sum_{k=0}^N \sigma_k(x) \sigma_k(y)^\ast \Bigg) \overline{\widehat{\psi}(y)}~ \dd x \dd y\\
            &= \iint_{\R^d\times\R^d}  \widehat{\phi}(x)^\ast\, Q(x-y)\, \overline{\widehat{\psi}(y)}~ \dd x \dd y\\
            &= \int_{\R^d} \phi(\xi)^\ast \widehat{Q}(\xi) \frac1{(2\pi)^{d/2}} \int_{\R^d} \overline{\e^{{\rm i} y\cdot \xi} \widehat{\psi}(y)}~ \dd y \dd \xi  \\
            &= \int_{\R^d} \phi(\xi)^\ast \widehat{Q}(\xi) \overline{\psi(\xi)}~ \dd \xi,
        \end{aligned}
    \end{equation*}
    where in the second identity we used the fact that $\sum_{k=0}^N \sigma_k(x) \sigma_k(y)^\ast \to Q(x-y)$ uniformly on any compact sets.
\end{proof}

With this representation of the covariance function, we can write
\begin{equation}\label{eq:noise-series-expansion}
    W_{\cQ}(t,x)=\sum_{k\in\N}\sigma_k(x)B_k(t),
\end{equation}
where $(B_{k}(\cdot))_{k\in\N}$ is a collection of independent standard Brownian motions on $\R$, given by
\[ 
    B_k(t)=\frac{\langle W_{\cQ}(t),\cQ^{-1/2}\sigma_k\rangle}{\|\sigma_k \|_{L^2}}.
\]
Finally, going back to the noise appearing in \eqref{eq:transport}, we can write
\[
    V(t,x)=\sum_{k\in\N}\sigma_k(x) \dot B_k(t),
\]
in the sense that
\[
   V(\mathbf{h})= \sum_{k\in\N} \int_{\R_{+}} \biggl(\int_{\R^d} \sigma_k(x)\cdot \mathbf{h}(t,x) \dd x\biggr) \dd B_k(t),
\]
where $\mathbf{h}\in\H$.\par
The previous observations establish the first step listed at the beginning of this section. Now we move on to making sense of the Stratonovich integration.\par
At a formal level, the term $\nabla\cdot (V\circ \theta)$ is understood in the Stratonovich sense, namely
\[
    \partial_t \theta + \nabla\cdot(V\circ\theta) = \kappa \Delta \theta,
\]
and, using the fact that $Q(x)$ is an even function, can be written (at least formally) as It\^o integral plus correction, that is
\begin{equation}\label{eq:transport_Ito}
    \partial_t \theta + \nabla\cdot (\theta V) = \nu\Delta \theta +\kappa \Delta \theta,
\end{equation}
where $\nu$ is as in \textbf{Assumptions \ref{assump-covariance}}, \textbf{\ref{Assump_compressible}}.  This leads us to the following notion of solution to \eqref{eq:transport} (see also \cite[Definition 2.16]{GalLuo-weak}):

\begin{definition}\label{def:Ito_sol}
   Let $(\Omega,\cA,(\cF_t)_t,\P)$ be a given filtered probability space satisfying the standard assumptions, let $V$ be as above. Let $p\in (1,\infty)$ and let $\theta_0\in L^1\cap L^p$. A solution to \eqref{eq:transport} is an $(\cF_t)_t$-progressively measurable process $\theta:[0,T]\times\Omega \to L^1\cap L^p$, satisfying
    \begin{itemize}
        \item[(i)] $\theta$ is weakly continuous and in $L^\infty([0,T];L^1\cap L^p) \quad \P\text{-a.s.}$;
        \item[(ii)] For all $\phi\in C_c^\infty (\R^d)$, we have
                 \begin{align*}
                    \langle\theta_t,\phi\rangle = \langle\theta_0,\phi\rangle +\int_0^t \langle\theta_s, \nabla  \phi\cdot V(\dd s)\rangle +(\kappa+\nu)\int_0^t \langle \theta_s,\Delta\phi\rangle \dd s,
                \end{align*}
            where we interpret the stochastic It\^o integral as
            \begin{align*}
                \int_0^t \langle\theta_s, \nabla  \phi\cdot V(\dd s)\rangle = \sum_k \int_0^t \langle\nabla\phi, \sigma_k \theta_s \rangle \dd B_k(s).
            \end{align*}
    \end{itemize}
\end{definition}
Note that the above It\^o integral makes sense since, by \textbf{Lemma \ref{lem-Q-series}},
\begin{align*}
    \sum_k |\langle\nabla\phi, \sigma_k \theta_s \rangle|^2 &= \iint \theta_s(x)\nabla\phi(x) \cdot Q(x-y)\nabla\phi(y) \theta_s(y) \dd x\dd y \\
    &\le |Q(0)|\, \|\nabla \phi\|_{L^\infty}^2 \|\theta_s\|_{L^1}^2.
\end{align*}

\begin{remark}
    It can be shown, see for example \cite[Appendix B]{GalLuo-weak}, that, when the spectral intensity $g$ in \eqref{covar-Fourier} is decaying rapidly at infinity, that is, when $Q$ is sufficiently smooth, the Stratonovich formulation \eqref{eq:transport} makes sense and the equivalence with the It\^o formulation \eqref{eq:transport_Ito} holds rigorously. Hence, the point (ii) in Definition \ref{def:Ito_sol} is equivalent to:
    \begin{itemize}
        \item For  all $\phi\in C_c^\infty(\R^d)$, the process $t\rightarrow\langle\phi,\theta_t\rangle$ is a semimartingale;
        \item For all $\phi\in C_c^\infty(\R^d)$, we have
            \begin{align*}
                \langle\theta_t,\phi\rangle = \langle\theta_0,\phi\rangle +\lim_{n\rightarrow\infty}\sum_{k\leq n}\int_0^t \langle\nabla  \phi,\theta_s\sigma_k\rangle\circ \dd B_k(s) +\kappa\int_0^t \langle \theta_s,\Delta\phi\rangle \dd s.
            \end{align*}
    \end{itemize}
\end{remark}


We have the following well-posedness result:

\begin{theorem}\label{thm:Well_posed}
    Assume that \(Q\) satisfies \textbf{Assumption \ref{assump-covariance}} or \textbf{Assumption \ref{Assump_compressible}}. Then, for all \(\theta_0\in L^1\cap L^p\), \eqref{eq:transport} has a unique solution, in the sense of \textbf{Definition \ref{def:Ito_sol}}.\par Further, in the case \(Q\) satisfies \textbf{Assumption \ref{assump-covariance}}, we have the following estimates
    \begin{equation}\label{eq:inc_Lp_est}
         \quad \sup_{0\le t\le T} \|\theta_t \|_{ L^1\cap L^p} \le \|\theta_0 \|_{ L^1\cap L^p}
    \end{equation}
    and
    \begin{equation}\label{eq:energy_est}
        \quad \sup_{0\le t\le T} \|\theta_t \|_{L^2}^2 + 2\kappa \int_0^T \|\nabla\theta_t \|_{L^2}^2 \dd t \le 2 \|\theta_0 \|_{L^2}^2.
    \end{equation}
    In the case \(Q\) satisfies \textbf{Assumption \ref{Assump_compressible}}, if the initial condition $\theta_0$ is in $C^\infty_c(\R^d)$, then, for every $t>0$, the solution $\theta_t$ is also in $C^\infty_b(\R^d)$.
\end{theorem}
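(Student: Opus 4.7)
The plan is the one standard for linear transport--diffusion SPDEs driven by Kraichnan-type noise. I would construct the solution by truncating the noise series \eqref{eq:noise-series-expansion} to $V^N := \sum_{k\le N} \sigma_k \dot B_k$ and mollifying the initial data to $\theta_0^\eps \in C^\infty_c$; for the resulting equation with smooth, finite-dimensional noise, Kunita's theory \cite{Kunita90} produces a unique classical solution $\theta^{N,\eps}$. Uniform a priori bounds in $(N,\eps)$, combined with tightness and identification of the limiting stochastic integral, yield a solution to \eqref{eq:transport_Ito} in the sense of \textbf{Definition \ref{def:Ito_sol}}, and uniqueness follows from the same estimates applied, by linearity, to the difference of two solutions.

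\textbf{A priori estimates in the incompressible case.} For $p\ge 2$, I would apply It\^o's formula to a regularized $|\theta_t|^p$ in the It\^o form \eqref{eq:transport_Ito} and integrate in space. Using $\nabla\cdot\sigma_k = 0$, integration by parts gives
\[
\int |\theta|^{p-1}\mathrm{sgn}(\theta)\,\nabla\cdot(\sigma_k\theta)\dd x = \tfrac{1}{p}\int \sigma_k\cdot \nabla |\theta|^p \dd x = 0,
\]
so the stochastic integral vanishes pathwise. The It\^o correction equals
\[
\tfrac{p-1}{2}\sum_k \int |\theta|^{p-2}(\sigma_k\cdot\nabla\theta)^2 \dd x = \nu(p-1)\int |\theta|^{p-2}|\nabla\theta|^2 \dd x,
\]
where I used \textbf{Lemma \ref{lem-Q-series}} at $x=y$ to obtain $\sum_k \sigma_k(x)\sigma_k(x)^T = Q(0) = 2\nu\,I$. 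This cancels exactly the $\nu\Delta\theta$ part of the deterministic drift, leaving a nonpositive dissipation term. Specialising to $p=2$ gives the pathwise identity
\[
\|\theta_t\|_{L^2}^2 + 2\kappa\int_0^t \|\nabla\theta_s\|_{L^2}^2 \dd s = \|\theta_0\|_{L^2}^2,
\]
which implies \eqref{eq:energy_est}; general $p$ yields $\|\theta_t\|_{L^p}\le \|\theta_0\|_{L^p}$, and the case $p=1$ follows by decomposing $\theta_0 = \theta_0^+ - \theta_0^-$ and invoking mass conservation (since \eqref{eq:transport_Ito} is Fokker--Planck type and preserves positivity of densities).

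\textbf{Passage to the limit and the compressible case.} These estimates, together with parabolic regularity provided by $\kappa\Delta$, give tightness of $\theta^{N,\eps}$ in $L^2(\Omega; C_T H^{-1}_x)$; one passes to the limit in each term of \textbf{Definition \ref{def:Ito_sol}}, using \textbf{Proposition \ref{prop:sum-fourier-sigma}} to identify the limiting It\^o integral. Under \textbf{Assumption \ref{Assump_compressible}}, the $\sigma_k$ are smooth with $\nabla\cdot\sigma_k \in L^\infty$; the same It\^o computation now produces additional lower order terms proportional to $\nabla\cdot\sigma_k$, which are absorbed by Gr\"onwall's inequality to give $L^p$ bounds with an exponential-in-time prefactor. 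Propagation of smoothness starting from $\theta_0 \in C^\infty_c$ follows directly from Kunita's theory of smooth stochastic flows \cite{Kunita90}: differentiating \eqref{eq:transport_Ito} in space produces linear SPDEs with smooth bounded coefficients for the derivatives, whose a priori bounds are handled as above.

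\textbf{Main obstacle.} The principal technical subtlety is the limit $N\to\infty$ in the stochastic integral: the individual terms $\langle\nabla\phi,\sigma_k \theta\rangle \dd B_k$ are not summable termwise. What is summable is the full quadratic variation, since by \textbf{Lemma \ref{lem-Q-series}} one has
\[
\sum_k |\langle\nabla\phi,\sigma_k\theta\rangle|^2 = \iint \theta(x)\nabla\phi(x)\cdot Q(x-y)\nabla\phi(y)\theta(y)\dd x\dd y \le |Q(0)|\,\|\nabla\phi\|_{L^\infty}^2\|\theta\|_{L^1}^2,
\]
and this is precisely what controls the It\^o integral uniformly in $N$ and drives its convergence. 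Once this passage is justified, the remainder is routine It\^o calculus and parabolic mollification.
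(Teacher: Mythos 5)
The paper does not prove this theorem from scratch: its ``proof'' is three sentences of citations, delegating existence/uniqueness in the compressible case to \cite[Proposition 2.1]{Dunlap-Gu}, existence/uniqueness and the bounds \eqref{eq:inc_Lp_est}--\eqref{eq:energy_est} in the incompressible case to \cite[Theorem 1.3 and Remark 3.2]{GalLuo-weak}, and the propagation of $C^\infty_c$ regularity to the flow representation in \cite{Dunlap-Gu}. Your proposal is therefore necessarily a different route --- a self-contained reconstruction --- and as a sketch of the \emph{existence} part and of the a priori estimates it is essentially faithful to what those references actually do: truncation/mollification, Kunita's theory for the regularized problem, the It\^o computation in which $\nabla\cdot\sigma_k=0$ kills the martingale term for $\|\theta\|_{L^p}^p$ and the It\^o correction $\sum_k\sigma_k(x)\sigma_k(x)^T=Q(0)=2\nu I$ exactly cancels the $\nu\Delta$ part of the drift, leaving the $\kappa$-dissipation. (Minor slip: the It\^o correction for $\int|\theta|^p$ carries the factor $\tfrac{p(p-1)}{2}$, not $\tfrac{p-1}{2}$; the matching drift term carries $p(p-1)$ as well, so the cancellation and the $p=2$ identity you state are unaffected.) Your identification of the convergence of the truncated stochastic integrals via the summed quadratic variation is also the right mechanism, and is the same computation the paper records just after Definition \ref{def:Ito_sol}.

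The genuine gap is uniqueness. You assert that it ``follows from the same estimates applied, by linearity, to the difference of two solutions,'' but the energy identity you derive is established only for the smooth approximations $\theta^{N,\eps}$ and survives the limit merely as an inequality for the \emph{constructed} solution. An arbitrary weak solution in the sense of Definition \ref{def:Ito_sol} lives in $L^\infty_t(L^1\cap L^p)$ with no a priori $H^1$ regularity, so you cannot apply It\^o's formula to $\|\theta^1_t-\theta^2_t\|_{L^2}^2$ directly; one must first prove that every weak solution satisfies the energy inequality, which requires a mollification--commutator argument (DiPerna--Lions type, using $\kappa>0$ to absorb the commutator errors), or an alternative duality/mild-formulation argument. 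In the incompressible case this is precisely the content of \cite{GalLuo-weak} --- note that under Assumption \ref{assump-covariance} the covariance $Q$ need only be continuous (Kraichnan-type), so the $\sigma_k$ are not Lipschitz and Kunita's flow theory does not apply to the untruncated equation; the title ``weak well-posedness by transport noise'' of that reference signals that uniqueness there is not a routine consequence of the a priori bounds. Your sketch would be complete if you either carried out this commutator step or, as the paper does, cited it.
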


\begin{proof}
    In the case where \( Q \) satisfies \textbf{Assumption \ref{Assump_compressible}}, existence and uniqueness, as well as smoothness for $\theta_0\in C^\infty_c$, follow from \cite[Proposition 2.1]{Dunlap-Gu} and \cite[Theorem 3.1]{Kunita_ref_2}, the representation formula (2.5) in \cite{Dunlap-Gu} and the smoothness of the associated stochastic flow $X$ (precisely, $\P$-a.s., $X$ and its space derivatives are jointly continuous in $(t,x)$ and $DX$ is non-singular for every $(t,x)$), see e.g. \cite{Kunita84} (see also the comments before \cite[Proposition 2.1]{Dunlap-Gu}). 
    In the case when \( Q \) satisfies \textbf{Assumption \ref{assump-covariance}}, existence and uniqueness follows from \cite[Theorem 1.3]{GalLuo-weak}; the estimate \eqref{eq:inc_Lp_est} follows again from \cite[Theorem 1.3]{GalLuo-weak}, while \eqref{eq:energy_est} follows similarly by taking into account Remark 3.2 therein.
\end{proof}

If $\theta$ is a solution to \eqref{eq:transport}, $\P$-a.s., $\theta$ is in $L^\infty_t(H^{-d/2-\varepsilon})$ by Sobolev embedding, hence $\Delta \theta$ is in $L^\infty_t(H^{-d/2-2-\varepsilon})$. By Proposition \ref{prop:sum-fourier-sigma} (see also the proof of Proposition \ref{prop:quant_est} below), we have 
($\langle \xi\rangle:= (1+|\xi|^2)^{1/2}$)
\begin{align*}
       \sum_k\|\nabla\cdot(\theta_s\sigma_k)\|_{H^{-d/2-2}}^2
        &\le \sum_k\|\theta_s\sigma_k\|_{H^{-d/2-1}}^2 \\
        &= \sum_k \int |\widehat\theta_s \ast \widehat\sigma_k(\xi)|^2 \langle \xi\rangle^{-d-2} \dd\xi \\
        &= \int |\widehat\theta_s|^2\ast \trace \widehat Q(\xi) \langle\xi \rangle^{-d-2} \dd\xi \\
        &\lesssim \|\widehat\theta_s\|_{L^{\infty}}^2 \|{\rm Tr} \, \widehat Q \|_{L^{1}} \int \langle \xi\rangle^{-d-2} \dd\xi \\
        &\lesssim \|\theta_s\|_{L^1}^2.
    \end{align*}
    Hence $\sum_k\|\nabla\cdot(\theta_s\sigma_k)\|_{H^{-d/2-2}}^2$ is in $L^\infty_t$, $\P$-a.s., and so
    \begin{equation*}
        \int_0^t \nabla \cdot (\theta_s V(\dd s) ) = \sum_k \int_0^t \nabla \cdot (\theta_s \sigma_k) \dd B_k(s)
    \end{equation*}
    makes sense as $H^{-d/2-2}$-valued stochastic It\^o integral and is in $C_t^{\gamma}(H^{-d/2-2})$ for every $\gamma<1/2$, $\P$-a.s.. Hence \eqref{eq:transport} holds as the following SDE on $H^{-d/2-2-\varepsilon}$:
    \begin{equation*}
        \theta_t = \theta_0 -\int_0^t \nabla\cdot ( \theta_s V(\dd s)) +(\kappa+\nu)\int_0^t \Delta \theta_s \dd s.
    \end{equation*}
    In particular, $\theta$ is in $C_t^{\gamma}(H^{-d/2-2-\varepsilon})$ for every $\gamma<1/2$, $\P$-a.s.. By interpolation with the $L^\infty_t(H^{-d/2-\varepsilon})$ bound, we get that, for every $\epsilon>0$, $\theta$ is in $C_t(H^{-d/2-\epsilon})$, $\P$-a.s.\par
    These observations naturally lead us to the notion of a $H^{-d/2-\epsilon}$-mild solution to \eqref{eq:transport_Ito} (equivalently to \eqref{eq:transport}), which we will use extensively in our proofs. We say that $(\theta_t)_{t\in [0,T]}$ is a $H^{-d/2-\epsilon}$-mild solution if 
    \begin{equation}\label{eq:mild_sol}
    \theta_t=P_t\theta_0-\int_{0}^t P_{t-s}\nabla\cdot  (\theta_s V(\dd s)),
\end{equation}
a.s. in $H^{-d/2-\epsilon}$, where we interpret the It\^o integral as in \cite{SPDEbook}, $(P_t)_{t\geq0}$ be the heat semigroup generated by the operator $(\kappa+\nu) \Delta$. It is a standard fact that weak solutions to SPDEs are also mild solutions. 

\begin{proposition}
    Let $\epsilon>0$, \(\theta_0\in L^1\cap L^2 \) and let $\theta$ be a weak solution of \eqref{eq:transport}, as in \textbf{Definition \ref{def:Ito_sol}}, with \(\theta_0\) as the initial data. Then $\theta$ is $H^{-d/2-\epsilon}$-mild solution to \eqref{eq:transport_Ito}.
\end{proposition}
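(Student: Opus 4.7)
The plan is to use the standard weak-to-mild technique: test the weak formulation against a time-dependent test function that solves the backward heat equation with terminal value $\phi$, so that the drift Laplacian in \textbf{Definition \ref{def:Ito_sol}} cancels the time derivative of the test function, leaving only the stochastic term, which will reproduce the mild convolution after self-adjointness and stochastic Fubini.

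First, I would extend the weak formulation from $\phi\in C_c^\infty(\R^d)$ to Schwartz test functions. This is a routine density argument based on the $L^\infty_t(L^1\cap L^p)$ bound on $\theta$ and on the estimate
\[
    \sum_k |\langle \nabla\phi,\sigma_k\theta_s\rangle|^2 \le |Q(0)|\,\|\nabla\phi\|_{L^\infty}^2\|\theta_s\|_{L^1}^2
\]
already noted after \textbf{Definition \ref{def:Ito_sol}}, which remains valid for $\phi\in\cS(\R^d)$. Then, fixing $t\in[0,T]$ and $\phi\in\cS(\R^d)$, I would set $\phi_s:=P_{t-s}\phi$, a Schwartz function for every $s\in[0,t]$ satisfying $\partial_s\phi_s=-(\kappa+\nu)\Delta\phi_s$ and $\phi_t=\phi$. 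Approximating $\phi_s$ by step functions in $s$ and applying the (extended) weak formulation on each subinterval, and passing to the limit using the $C_t(H^{-d/2-\epsilon})$ regularity of $\theta$ established just before the proposition, yields
\begin{align*}
    \langle\theta_t,\phi\rangle
    &= \langle\theta_0,P_t\phi\rangle + \int_0^t \langle\theta_s,\partial_s P_{t-s}\phi + (\kappa+\nu)\Delta P_{t-s}\phi\rangle\dd s + \int_0^t \langle\theta_s,\nabla P_{t-s}\phi\cdot V(\dd s)\rangle \\
    &= \langle P_t\theta_0,\phi\rangle + \int_0^t \langle\theta_s,\nabla P_{t-s}\phi\cdot V(\dd s)\rangle,
\end{align*}
the two Laplacian contributions cancelling by construction of $\phi_s$.

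Next, I would identify the remaining stochastic term with the mild convolution tested against $\phi$. Integrating by parts (valid since $\theta_s\sigma_k\in L^1$ and $P_{t-s}\phi\in\cS$) rewrites each summand as $-\int_0^t \langle P_{t-s}\phi,\nabla\cdot(\theta_s\sigma_k)\rangle\dd B_k(s)$; the self-adjointness of the heat semigroup on $L^2$ together with a stochastic Fubini argument then recasts this as $-\big\langle\phi,\int_0^t P_{t-s}\nabla\cdot(\theta_s V(\dd s))\big\rangle$. The Fubini step is legitimate thanks to the $H^{-d/2-2}$ bound on $\sum_k\|\nabla\cdot(\theta_s\sigma_k)\|_{H^{-d/2-2}}^2$ derived immediately before the proposition statement, combined with the boundedness of $P_{t-s}$ on negative-order Sobolev spaces. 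Since $\cS(\R^d)$ is dense in $H^{d/2+\epsilon}(\R^d)$, this upgrades the tested identity to \eqref{eq:mild_sol} as an equality in $H^{-d/2-\epsilon}$, almost surely.

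The main technical obstacle is the rigorous implementation of the time-dependent test function step, since \textbf{Definition \ref{def:Ito_sol}} only provides the weak formulation for time-independent, compactly supported $\phi$. This requires a two-stage approximation—first extending from $C_c^\infty$ to $\cS$ by truncation, then approximating $\phi_s=P_{t-s}\phi$ by piecewise-constant-in-time Schwartz functions—and verifying that both the Bochner and the stochastic integrals converge under the available a priori regularity of $\theta$. Once this is in place, the cancellation of the Laplacian terms and the stochastic Fubini computation are essentially algebraic.
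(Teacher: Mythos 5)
Your proposal is correct and follows essentially the same route as the paper: both pass to the time-dependent test function $P_{t-s}\phi$ so the Laplacian terms cancel, then use self-adjointness of the semigroup and a density argument to upgrade the tested identity to an equality in $H^{-d/2-\epsilon}$. The only cosmetic difference is that the paper places the stochastic convolution in $H^{-d/2-\epsilon}$ by adapting Proposition \ref{prop:quant_est} (using the $L^2$ moment bound on $\theta_s$), whereas you invoke the cruder $H^{-d/2-2}$ bound and recover the better space a posteriori from the identity itself; both are legitimate.
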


\begin{proof}
    The proof is similar to the proof from \cite[Chapter 6]{SPDEbook}. Let $(\theta_s)_{s\in[0,T]}$ be a weak solution, in the sense of Definition \ref{def:Ito_sol}. Using a density argument one can show that for all $f\in C^1([0,t];\cS(\R^d))$ we have
    \[
         \langle\theta_t,f_t\rangle- \langle\theta_0,f_0\rangle 
         =\int_0^t \langle\theta_s, \nabla  f_s\cdot V(\dd s)\rangle +\int_0^t \big\langle \theta_s, \partial_s f_s+(\kappa+\nu)\Delta f_s \big\rangle \dd s.
    \]
    We choose $f_s=P_{t-s}\phi$, where $\phi\in C_c^\infty(\R^d)$. Since $\partial_s f_s+(\kappa+\nu)\Delta f_s=0$, we get
    \[
        \langle\theta_t,\phi\rangle-\langle\theta_0,P_t\phi\rangle=-\int_0^t\langle P_{t-s}\phi , \nabla\cdot (\theta_s V(\dd s)) \rangle.
    \]
    A straightforward adaptation of the proof of \textbf{Proposition \ref{prop:quant_est}} below,  shows that  
    \[
        \int_0^t P_{t-s} \nabla\cdot (\theta_s V(\dd s)),
    \]
    is in $H^{-d/2-\epsilon}$, when $\{\theta_s\}_{s\geq0}$ is a progressively measurable process such that 
    \[
        \sup_{s\in[0,t]}\E\big[\|\theta_s\|_{L^2_x}^2 \big]<\infty.
    \]
    The latter is true by \eqref{eq:inc_Lp_est}, when \( Q \) satisfies \textbf{Assumption \ref{assump-covariance}}. If \( Q \) satisfies \textbf{Assumption \ref{Assump_compressible}} this bound can be obtained by adapting the proof of \textbf{Lemma \ref{lemm:Lp_moment_bound}} (see also \textbf{Remark \ref{rem:quant_est_unscaled}}).\par
    As such, we can write
    \[
        \langle\theta_t,\phi\rangle-\langle P_t\theta_0,\phi\rangle=- \bigg\langle\int_0^t P_{t-s}\nabla\cdot (\theta_s V(\dd s)),\phi \bigg\rangle,
    \]
    where we also used the fact that $P_t$ is self-adjoint. This identity holds a.s. for all $\phi\in C_c^\infty(\R^d)$. By the density of $C_c^\infty(\R^d)$ in $H^{-d/2-\epsilon}$ (recall that $\theta$ is in $C_t(H^{-d/2-\epsilon})$ by our observations before the statement of the proposition), we conclude that \eqref{eq:mild_sol} holds a.s. in $H^{-d/2-\epsilon}$. 
\end{proof}

Finally, we point out that, in the case where \( Q \) satisfies \textbf{Assumption \ref{Assump_compressible}}, \cite[Proposition 3.1 and Corollary 3.2]{Dunlap-Gu} construct a space-time stationary solution\footnote{By stationary field we mean a field whose law is invariant under space-time translations $f\mapsto f(\cdot+s,\cdot+y)$. Technically, to define $\Psi$ as solution to \eqref{eq:transport} in our setting, we should extend Definition \ref{def:Ito_sol} to include negative times and data in $L^1_{loc}$. While this is possible (as in \cite{Dunlap-Gu}), in this paper we only need that $\Psi$ satisfies \eqref{eq:DG_result_compressible} and \eqref{converg-second-moment}.} to \eqref{eq:transport}, which we denote by \(\Psi(t,x)\), such that \(\E[\Psi(t,x)]=1\), and \(\E[\Psi(t,x)^2]<\infty\). The correlation function of this field will appear in the statement of our main result. As such, define \(V_{\rm eff}\) to be the symmetric matrix, such that 
\begin{equation}\label{eq:eff_var}
    V_{\rm eff}^2=\int_{\R^d}\E[\Psi(0,0)\Psi(0,z)]Q(z)\dd z.
\end{equation}
Observe that, since $Q\in L^1$ and \(\Psi(t,z)\) has a finite second moment, this integral is finite.

\subsection{Main Result and Outline of the proof}\label{subs-main-result} 
To study the fluctuations of \eqref{eq:transport}, we introduce a parameter $n\in \N$ (which we will send to $\infty$). First we rescale the initial condition to  \eqref{eq:transport}:
\[
    \theta(0,x)=n^{-d} \varphi(x/n),
\]
where $\varphi(x)$ is a smooth and compactly supported function, independent of $n$. Let $\theta$ be the corresponding solution to \eqref{eq:transport}, which is understood in the It\^o form \eqref{eq:transport_Ito} and is well-posed by Theorem \ref{thm:Well_posed} (of course $\theta_0$ and so $\theta$ depend on $n$, but, with a small abuse of notation, we will omit this dependence, to make notation lighter). We then consider the diffusive scaling for $\theta$:
\begin{equation}\label{eq:densityrescaled}
   \theta^n(t,x)=n^{d}\theta(n^2t, nx).
\end{equation}
Note that $\theta^n$ satisfies \eqref{eq:transport_Ito}, with initial condition $\theta^n(0,x)=\varphi(x)$, but with $V$ replaced by $V^n(t,x) = nV(n^2t,nx)$. In particular, the mean of $\theta^n$ solves the heat equation  
\begin{equation}\label{eq:heat}
   \partial_t \bar\theta= (\kappa+\nu) \Delta \bar\theta, \quad \bar\theta(0)= \varphi.
\end{equation}
With these assumptions, \eqref{eq:fluctuations_quant} is written as
\begin{equation}\label{eq:chi_def}
    \cX_n(t,x):=n^{d/2}(\theta^n(t,x)-\bar\theta(t,x)).
\end{equation}
Our main result shows that \(\cX_n(t,x) \) has a Gaussian limit, as expected by the CLT-type scaling:

\begin{theorem}\label{thm:Main_thm}
      Let $\alpha> d/2$ and $\gamma\in (0,1/2)$. Assume that \( Q \) satisfies \textbf{Assumption \ref{Assump_compressible}}. Then, for every $\varphi\in C^{\infty}_c(\R^d)$, $\cX_n(t,x)$, as defined in \eqref{eq:chi_def}, converges in distribution in $C^\gamma([0,T]; H^{-\alpha}_{\rm loc})$ to  $\cU(t,x)$,  the solution of the following additive stochastic heat equation:
    \begin{equation}\label{eq:limit_eq}
        \partial_t\cU=(\kappa+\nu)\Delta\cU+ \nabla\cdot(\bar\theta\, V_{\rm eff} \xi),\quad \cU(0,x)=0,
    \end{equation}
    where $\xi$ is a vector-valued space-time white noise, and \( V_{\rm eff}\) as in \eqref{eq:eff_var}.\par
    In the case where \( Q \) satisfies \textbf{Assumption \ref{assump-covariance}}, the same is true, where the limiting equation is given by \eqref{eq:limit_eq} with \(V_{\rm eff}^2= g(0) \Pi\), where $\Pi$ is the Helmholtz-Leray projection. 
\end{theorem}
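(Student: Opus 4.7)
My approach combines tightness of $\cX_n$ with an identification of its limit points via the martingale problem associated with \eqref{eq:limit_eq}. Taking expectation in \eqref{eq:transport_Ito} (applied to $\theta_n$) shows that the mean of $\theta_n$ is exactly $\bar\theta$, so $\cX_n$ admits the mild formulation
\begin{equation*}
    \cX_n(t)=-\,n^{d/2}\int_0^t P^{\kappa+\nu}_{t-s}\,\nabla\cdot(\theta_n(s)\,V_n(\dd s)),
\end{equation*}
where the rescaled noise $V_n(t,x):=n\,V(n^2t,nx)$ is white in time with spatial covariance $Q(n(\cdot))$ and $P^{\kappa+\nu}_t$ denotes the heat semigroup of $(\kappa+\nu)\Delta$. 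Equivalently, for every $\phi\in C_c^\infty(\R^d)$ the process
\begin{equation*}
    M_n^\phi(t):=\langle \cX_n(t),\phi\rangle-(\kappa+\nu)\int_0^t\langle \cX_n(s),\Delta\phi\rangle\,\dd s
\end{equation*}
is a continuous martingale, and its quadratic variation (after the change of variable $z=n(y-x)$) reads
\begin{equation*}
    [M_n^\phi]_t=\int_0^t\!\!\iint\nabla\phi(x)^T\theta_n(s,x)\,Q(z)\,\theta_n(s,x+z/n)\,\nabla\phi(x+z/n)\,\dd x\,\dd z\,\dd s.
\end{equation*}

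\textbf{Tightness.} Applying the It\^o isometry to the mild form of $\cX_n$ and exploiting the uniform $L^2_x$ bound on $\theta_n$ (given by \textbf{Theorem \ref{thm:Well_posed}} in the incompressible case, and an analogous second-moment estimate in the compressible case), I would establish
\begin{equation*}
    \E\bigl[\|\cX_n(t)-\cX_n(s)\|_{H^{-\alpha}}^2\bigr]\lesssim |t-s|^{\beta}
\end{equation*}
for some $\beta>2\gamma$, together with a matching spatial bound. The compactness of the embedding $H^{-\alpha}\hookrightarrow H^{-\alpha'}_{\mathrm{loc}}$ for $\alpha<\alpha'$ and Kolmogorov's continuity criterion then yield tightness of $(\cX_n)$ in $C^\gamma([0,T];H^{-\alpha}_{\mathrm{loc}})$.

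\textbf{Identification of the limit and main obstacle.} The core of the proof is to show that
\begin{equation*}
    [M_n^\phi]_t\xrightarrow{\;n\to\infty\;}\int_0^t\!\!\int\bar\theta(s,x)^2\,\nabla\phi(x)^T V_{\mathrm{eff}}^2\nabla\phi(x)\,\dd x\,\dd s
\end{equation*}
in probability. For this, I would invoke the quenched local CLT of \cite{Dunlap-Gu}, which asserts $\theta_n(s,x)\approx\bar\theta(s,x)\,\Psi(n^2 s,nx)$ on microscopic scales; this reduces the question to showing that, after integration against the smooth density $\bar\theta^2|\nabla\phi|^2$ and against $Q(z)$ in $z$, the microscopic two-point function $\Psi(n^2 s,nx)\Psi(n^2 s,nx+z)$ may be replaced by its stationary expectation $\E[\Psi(0,0)\Psi(0,z)]$. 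This replacement follows from the spatial ergodicity of the stationary field $\Psi$, and the compact support of $Q$ under \textbf{Assumption \ref{Assump_compressible}} makes the $z$-integration finite and produces the matrix $V_{\mathrm{eff}}^2$ of \eqref{eq:eff_var}. Once $[M_n^\phi]_t$ converges, any weak limit $\cX^*$ inherits a continuous martingale $M^{*,\phi}$ with deterministic quadratic variation equal to $\int_0^t\!\int\bar\theta(s,x)^2|V_{\mathrm{eff}}\nabla\phi(x)|^2\,\dd x\,\dd s$; hence $M^{*,\phi}$ is Gaussian and $\cX^*$ solves \eqref{eq:limit_eq}. Well-posedness of this additive SHE identifies the limit uniquely and gives $\cX_n\Rightarrow\cU$. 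The main obstacle is making the local-CLT/ergodicity replacement quantitative: one must control the remainder in the local CLT uniformly over the microscopic window $|z|/n$, and translate the spatial ergodicity of $\Psi$ into convergence in probability of a spatial average against the smooth (possibly signed) weight $\bar\theta^2|\nabla\phi|^2$. The incompressible case (\textbf{Assumption \ref{assump-covariance}}) is considerably simpler since $\Psi\equiv 1$ (the uniform density being invariant), so one can replace $\theta_n(s,x)\theta_n(s,x+z/n)$ by $\bar\theta(s,x)^2$ directly using \cite{Dunlap-Gu} together with the $L^2$-bound \eqref{eq:inc_Lp_est}, yielding $V_{\mathrm{eff}}^2=\int Q$.
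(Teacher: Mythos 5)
Your overall architecture (mild/martingale formulation of $\cX_n$, tightness in $C^\gamma H^{-\alpha}_{\rm loc}$, convergence of the quadratic variation, identification via the martingale problem and well-posedness of the additive SHE) matches the paper's. However, there are two genuine gaps at the step you yourself flag as the core of the proof. First, you justify the replacement of $\Psi(n^2s,nx)\Psi(n^2s,nx+z)$ by $\E[\Psi(0,0)\Psi(0,z)]$ by ``spatial ergodicity of the stationary field $\Psi$''. Spatial ergodicity of $\Psi$ is not established in \cite{Dunlap-Gu} or in the paper, and proving it is essentially as hard as the concentration statement you need. The paper instead proves convergence in probability of $[\langle M_n,\phi\rangle]_t$ by an explicit second-moment computation: it shows $\E[\cT_2^2]\to(\E[\cT_2])^2$ via the four-point function $\E[\Psi^n_r(x_1)\Psi^n_r(x_2)\Psi^n_s(x_3)\Psi^n_s(x_4)]$ (Lemma \ref{converg-correlation-funct}), and the decorrelation mechanism there is \emph{temporal}, not spatial: for $r\neq s$ the fields $\Psi(n^2r,\cdot)$ and $\Psi(n^2s,\cdot)$ become asymptotically independent as $n\to\infty$ because the time gap $n^2|r-s|$ diverges and the noise is white in time (one approximates $\Psi(0,\cdot)$ by $\theta^{[n^2(r-s)]}(0,\cdot)$, which is exactly independent of the past). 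No analogous exact independence is available in the spatial direction, so your ergodicity step needs either a proof of spatial mixing of $\Psi$ (including its four-point function) or a different argument. Relatedly, the bound \eqref{eq:DG_result_compressible} only holds for $t\geq\eps$, so the contribution of $[0,\eps]$ to the quadratic variation must be controlled separately (the paper does this with the two-point heat-kernel bound of Lemma \ref{lemm:resc_corr_bound}); your plan is silent on this.

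Second, in the incompressible case you propose to ``replace $\theta_n(s,x)\theta_n(s,x+z/n)$ by $\bar\theta(s,x)^2$ directly using \cite{Dunlap-Gu}'', but the local limit theorem of \cite{Dunlap-Gu} is proved under Assumption \ref{Assump_compressible} (smooth, compactly supported $Q$) and is not available under Assumption \ref{assump-covariance}, which allows rough, non-integrable Kraichnan-type kernels. The paper instead exploits the divergence-free structure through the pathwise energy estimate \eqref{eq:energy_est}: the resulting $\dot H^1$ control is interpolated against the $\dot H^{-d/2-\eps}$ fluctuation bound of Proposition \ref{prop:quant_est} to obtain $\E\int_0^T\|\theta^n_s-\bar\theta_s\|_2^2\,\dd s\to0$, which suffices to replace $\theta^n$ by $\bar\theta$ in the quadratic variation. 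A minor further point: a second-moment increment bound $\E\|\cX_n(t)-\cX_n(s)\|^2_{H^{-\alpha}}\lesssim|t-s|^\beta$ necessarily has $\beta\le1$ (the quadratic variation grows linearly in time), so Kolmogorov's criterion with $q=2$ yields no H\"older regularity at all; to reach $C^\gamma$ for $\gamma$ up to $1/2$ you must prove the increment bound for arbitrarily high moments via Burkholder--Davis--Gundy, as in Lemma \ref{lem-continuity}.
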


\begin{remark}\label{rem:Q_assum}
    The field \(\Psi(t,x)\) is equal to \(1\) iff \(Q\) is divergence-free, see \cite{Dunlap-Gu}; if $Q$ is also integrable, then a slight modification of the proof in Section \ref{Sec:Proof_inc_compr} shows that $V_{\rm eff}^2= \int_{\R^d} Q(z)\dd z$ in this case, thus the first part of the theorem covers the case where the correlation function is divergence-free and integrable. The point is that the divergence-free condition allows for the control of the SPDE, \eqref{eq:transport}, even if the correlation function of the noise is not smooth, or compactly supported. In fact, \textbf{Assumption \ref{assump-covariance}} allows for Kraichnan-type noises, precisely we can take \(g(\xi)=\langle \xi\rangle^{-(d+\zeta)} \), for \(\zeta\in(0,2)\). On the other hand, it seems possible \textbf{Assumption \ref{Assump_compressible}} is too strong. Indeed, the results of \textbf{Section \ref{Sec:Cor_Fun_estimates}} can be proved under the assumption that \( Q \) is twice continuously differentiable, see \textbf{Remark \ref{rem:weaker_assum}}. Here, we work under \textbf{Assumption \ref{Assump_compressible}} in order to use the results of \cite{Dunlap-Gu}, specifically, the estimate \eqref{eq:DG_result_compressible} below. We believe that \eqref{eq:DG_result_compressible} remains true under weaker assumptions on \( Q \), but we do not pursue this here in order not to detract from our main result.
\end{remark}

\begin{remark}
    If one assumes that the correlation function \( Q \) is divergence-free, \textbf{Assumption \ref{assump-covariance}} is nearly optimal. Indeed, if one restricts even further and takes \(g(\xi)=\langle \xi\rangle^{-(d+\zeta)} \) as in the previous remark, then we cannot prove a statement as in \textbf{Theorem \ref{thm:Main_thm}} when \( \zeta=0 \). In this case, the It\^o-Stratonovich correction is infinite and \eqref{eq:transport} does not make sense. This is still interesting, however, as in \( d=2\) it formally corresponds to a white in time noise that spatially 'looks like' the \(2d\) Gaussian Free Field. With this choice of noise, \eqref{eq:transport} is formally scaling critical, so in analogy to recent works on critical SPDEs \cite{Burgers, Nikos_1, Nikos_2, Simon_paper, Dun_paper, XM_paper} we expect that a logarithmic correction to the scaling considered here is needed in order to see non-trivial behavior.
\end{remark}

\begin{remark}\label{Rem:CLT_Correction}
    Observe that we take as the initial data \( \varphi\in C_c^\infty(\R^d) \). When \(\varphi\) is nonnegative and \( \| \varphi \|_{L^1}=1 \), we can interpret this choice as starting the diffusion \eqref{eq:Langragian_view} with an initial condition \( X_0\sim n^{-d}\varphi(x/n) \dd x\).
    Then, we can write 
    \[
       \langle\cX_n(t,\cdot),g\rangle= n^{d/2}\big(\E_{\varphi}[g(n^{-1}X_{n^2 t})|V]-\E[g(\tilde B_t)]\big),
    \]
    where \(\E_{\varphi}\) is the expectation with respect to the law of the diffusion \eqref{eq:Langragian_view}, with \(\varphi(x)\dd x\) as the initial distribution and conditional on \(V\), and \((\tilde B_t)_{t\geq0}\)  denotes a Brownian motion with diffusivity \((\kappa+\nu) I_{d}\), with \( \varphi(x)\dd x\) as an initial distribution.\par
    The result of \cite{Dunlap-Gu}, implies that 
    \[
        \E_{\varphi}[g(n^{-1}X_{n^2 t})|V]-\E[g(\tilde B_t)] \rightarrow0,
    \]
    almost surely. This is a form of a quenched central limit theorem. Therefore, as mentioned in the introduction, \textbf{Theorem \ref{thm:Main_thm}} shows that the scaling limit of the first order correction to the quenched CLT is Gaussian with an explicit variance. Notably, we cannot take \(\varphi\) to be a Dirac delta function centered at \(0\) (i.e., start \(X_t\) from \( 0\)). We believe this to be an artifact of the proof, and similar methods can be used to extend our main results in this case as well. 
\end{remark}

Let us sketch the basic idea of the proof. We make use of the linearity of \eqref{eq:transport_Ito} (more exactly, its rescaled version \eqref{eq-n}) and the white-in-time correlations of the noise to write \eqref{eq:chi_def} as a stochastic integral. In particular,  \eqref{eq:chi_def} is a martingale. Therefore, to prove \textbf{Theorem \ref{thm:Main_thm}}, we show that \eqref{eq:chi_def} is tight in $C^\gamma([0,T]; H^{-\alpha}_{\rm loc})$, and that its quadratic variation converges to the quadratic variation of the martingale part of the mild solution of \eqref{eq:limit_eq}. This, combined with the Skorohod representation theorem, will allow us to conclude.\par
To prove tightness for \eqref{eq:chi_def}, we rely on quantitative estimates  for moments of 
\[
    \|\theta_t^n - \bar \theta_t\|_{\dot H^{-\alpha}_x}.
\]
This is the content of \textbf{Proposition \ref{prop:quant_est}}. To prove these estimates, we need to control the moments of \( L^2\) norms of \(\theta_t^n\), which is done in \textbf{Section \ref{Sec:Cor_Fun_estimates}}. This control is immediate in the case where the noise is divergence-free, as we can use \eqref{eq:inc_Lp_est}. To obtain a similar control under \textbf{Assumption \ref{Assump_compressible}}, we rely on the correlation functions of \eqref{eq:transport_Ito}. These functions satisfy a closed-form PDE that has a fundamental solution which, in turn, satisfies appropriate heat kernel bounds (see \textbf{Propositions \ref{prop:Cor_PDE_derivation}} and \textbf{\ref{prop:corr_bound}}). Using these observations, we can obtain the required control of the \(L^2\) moments of \(\theta_t^n\), see \textbf{Lemma \ref{lemm:Lp_moment_bound}}.

Having these estimates at hand, we can prove tightness of the laws of \( \{\cX_n\}_{n\ge 1} \) in $C^\gamma([0,T]; H^{-\alpha}_{\rm loc})$, see \textbf{Lemma \ref{lem-continuity}}  and \textbf{Proposition \ref{prop-tightness-X-n}}. To calculate the limiting covariance, we make use of the pointwise limiting statistics of \( \theta^n(t,x)\). In particular, under \textbf{Assumption \ref{Assump_compressible}}, we make use of the result of \cite{Dunlap-Gu}. More specifically, for a fixed \(\eps>0\), we use the bound from \cite[Theorem 1.1]{Dunlap-Gu}\footnote{Actually, in \cite{Dunlap-Gu} this is proved in the case where $\varphi$ is a delta function, centered at $0$, and for all \(t\geq t_0\), for some \(t_0>0\). A straightforward adaptation of the arguments in \cite{Dunlap-Gu} can show \eqref{eq:DG_result_compressible} as well.}
\begin{equation}\label{eq:DG_result_compressible}
    \sup_{x\in\R^d} \E\big[|\theta^n(t,x)- q_t\ast \varphi(x)\Psi(n^2t,nx)|^2 \big]\lesssim_{\eps} n^{-\gamma}
\end{equation}
for some $\gamma>0$, and all $t\geq \eps$, where $q_t$ is the standard heat kernel on $\R^d$.\par

\section{Proofs}\label{Sec:Proofs}

\subsection{Correlation Functions and a priori estimates}\label{Sec:Cor_Fun_estimates}

Throughout this section, we work under \textbf{Assumption \ref{Assump_compressible}}. We consider \eqref{eq:transport_Ito}, with initial data $\theta_0 \in C_c^\infty(\R^d)$. It will become apparent that to control moments of the $H^{-\alpha}_x$ norm of $\theta_t-\bar \theta_t$, we will need to control $L^p_x$ norms of $\theta(t,x)$. In particular, we seek to prove a bound of the form 
\begin{equation}\label{eq:ideal_bound}
    \sup_{t\in[0,T]}\E\big[\|\theta_t\|_{L^p_x}^{r} \big]\lesssim 1.
\end{equation}
To prove this, we utilize the correlation functions of the model \eqref{eq:transport_Ito}.  More specifically, for any $p\in\N$, we define the $p$-th correlation function
\begin{equation}\label{eq:p_point_corr}
    \cS_p(t,x_{1:p}):=\E[\theta(t,x_1)\dots\theta(t,x_p)],
\end{equation}
where we recall the notation $x_{1:p}=(x_1,\dots,x_p)$. As we will see,  good pointwise bounds for the correlation functions imply bounds of the form \eqref{eq:ideal_bound} (see \textbf{Proposition \ref{prop:corr_bound}} and \textbf{Lemma \ref{lemm:Lp_moment_bound}}, below). As mentioned in the previous section, the advantage of dealing with the correlation functions, instead of $\|\theta\|_{L^p_x}$ directly, is that \eqref{eq:p_point_corr} satisfies an explicit parabolic PDE. Indeed, define the matrix
\begin{equation}\label{eq:p_eddy_diff_matrix}
    \cC_p(x_{1:p})= \begin{pmatrix}
                        (\kappa+\nu) I_d && Q(x_1-x_2)^\ast && \dots && Q(x_1- x_p)^\ast\\
                        Q(x_2-x_1) && (\kappa+\nu) I_d && \ldots && Q(x_2-x_p)^\ast \\
                        \vdots && \vdots && \ddots && \vdots\\
                        Q(x_p-x_1)      && Q(x_p-x_2) && \dots &&  (\kappa+\nu) I_d
                    \end{pmatrix}.
\end{equation}
We have the following proposition.

\begin{proposition}\label{prop:Cor_PDE_derivation}
    For all $p\in\N$, the correlation function \eqref{eq:p_point_corr} is a weak solution to 
    \begin{equation}\label{eq:p_point_corr_PDE}
\partial_t\cS_p(t,x_{1:p})= \trace(\nabla^2(\cC_p(x_{1:p}) \cS_p(t,x_{1:p}))),
     \end{equation}
     with $\cS_p(0,x_{1:p})=\theta_0^{\otimes p}(x_{1:p})$, where $\theta_0\in C^\infty_c(\R^d)$ is the initial data for \eqref{eq:transport}.
\end{proposition}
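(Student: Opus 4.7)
The plan is to derive \eqref{eq:p_point_corr_PDE} in weak form by applying It\^o's product rule to $\prod_{i=1}^p\theta(t,x_i)$, using the It\^o formulation \eqref{eq:transport_Ito} together with the noise expansion $V(t,x)=\sum_k\sigma_k(x)\dot B_k(t)$. \textbf{Assumption \ref{Assump_compressible}} with $\varphi\in C_c^\infty(\R^d)$ ensures, via \textbf{Theorem \ref{thm:Well_posed}}, that $\theta_t\in C_c^\infty(\R^d)$ almost surely for every $t$, so all pointwise and integration-by-parts operations are justified. To avoid evaluation at single points I shall test against tensor products $\psi=\phi_1\otimes\cdots\otimes\phi_p$ with $\phi_i\in C_c^\infty(\R^d)$; density of tensor products in $C_c^\infty(\R^{pd})$ then extends the identity to all test functions on $\R^{pd}$.

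Fix such $\phi_1,\dots,\phi_p$ and set $M_i(t):=\langle\theta_t,\phi_i\rangle$. By \textbf{Definition \ref{def:Ito_sol}} each $M_i$ is a real-valued It\^o semimartingale with drift $(\kappa+\nu)\langle\theta_t,\Delta\phi_i\rangle\,dt$ and martingale part $\sum_k\int_0^t\langle\nabla\phi_i,\sigma_k\theta_s\rangle\,dB_k(s)$. Applying the multi-factor It\^o product rule to $\prod_iM_i(t)$, then taking expectation (which kills the martingale contributions), yields
\[
\E\Big[\prod_iM_i(t)\Big]-\langle\varphi^{\otimes p},\psi\rangle = (\kappa+\nu)\sum_i\int_0^t\!\E\Big[\langle\theta_s,\Delta\phi_i\rangle\!\prod_{j\ne i}\!M_j(s)\Big]ds + \sum_{i<j}\int_0^t\!\E\Big[\!\prod_{k\ne i,j}\!M_k(s)\sum_\ell\langle\nabla\phi_i,\sigma_\ell\theta_s\rangle\langle\nabla\phi_j,\sigma_\ell\theta_s\rangle\Big]ds.
\]
By \textbf{Lemma \ref{lem-Q-series}}, $\sum_\ell\sigma_\ell^\alpha(x)\sigma_\ell^\beta(y)=Q^{\alpha\beta}(x-y)$, so the covariation summand rewrites, after two integrations by parts in $x,y$ using the compact support of $\phi_i,\phi_j$, as
\[
\iint\phi_i(x)\phi_j(y)\sum_{\alpha,\beta}\partial_{x^\alpha}\partial_{y^\beta}\bigl[\theta_s(x)\theta_s(y)Q^{\alpha\beta}(x-y)\bigr]\,dx\,dy.
\]
Moving the expectation inside the spatial integrals by Fubini and recognizing $\cS_p(s,x_{1:p})=\E[\theta_s(x_1)\cdots\theta_s(x_p)]$, the drift contribution assembles into the diagonal $(\kappa+\nu)I_d$-blocks of $\cC_p$, producing $(\kappa+\nu)\sum_i\Delta_{x_i}\cS_p$, while the pairwise terms reproduce the off-diagonal $Q(x_i-x_j)$-blocks; the sum matches the right-hand side of \eqref{eq:p_point_corr_PDE} paired against $\psi$. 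The initial condition $\cS_p(0,x_{1:p})=\varphi^{\otimes p}(x_{1:p})$ is immediate from $\theta(0)=\varphi$.

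The main technical point is justifying the interchange of expectation with the infinite series of stochastic integrals and with the spatial integrations against $\phi_i$. The key bound is
\[
\sum_k|\langle\nabla\phi,\sigma_k\theta_s\rangle|^2 \le |Q(0)|\,\|\nabla\phi\|_{L^\infty}^2\,\|\theta_s\|_{L^1}^2,
\]
recorded just after \textbf{Definition \ref{def:Ito_sol}}; combined with the $L^1$-mass bound $\|\theta_t\|_{L^1}=\|\varphi\|_{L^1}$ (and, where higher moments are needed, \textbf{Lemma \ref{lemm:Lp_moment_bound}}), it furnishes the uniform integrability needed to legitimize all Fubini-type exchanges and the BDG-type controls on the martingale contributions. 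Once these are in place, the derivation is a routine application of It\^o calculus.
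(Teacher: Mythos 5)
Your proposal follows essentially the same route as the paper: apply the multi-factor It\^o product rule to $\prod_i\langle\theta_t,\phi_i\rangle$ using the weak formulation of \textbf{Definition \ref{def:Ito_sol}}, identify the cross-variation via \textbf{Lemma \ref{lem-Q-series}} with the $Q(x_i-x_j)$ off-diagonal blocks of $\cC_p$, take expectations, and conclude by density of tensor-product test functions. The additional care you take with the summability bound and Fubini interchanges is sound and only makes explicit what the paper leaves implicit.
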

Here, for a matrix $A$, we denote
\[
    \trace(\nabla^2(Af)):=\sum_{i,j=1}\partial_{i,j}^2(a_{i,j}f)
\]

\begin{proof}
    
Let \(g_1,\dots, g_p\in C^\infty_c(\R^d)\). From \textbf{Definition \ref{def:Ito_sol}} we have
    \begin{equation}
        \langle\theta_t,g_i\rangle= \langle\theta_0,g_i\rangle+\int_0^t\langle\theta_s,(\kappa+\nu)\Delta g_i\rangle \dd s-\int_0^t\langle\nabla g_i, \theta_s V(\dd s)\rangle,
    \end{equation}
    for all \(i=1,\dots,p\).
    Now, for
    \[
        f(z_1,\dots,z_p):=\prod_{i=1}^pz_i,
    \]
    we apply It\^o's formula on $f(\langle\theta_t,g_1\rangle,\dots,\langle\theta_t,g_p\rangle)$. This yields
    \begin{equation}\label{eq:Ito_for_app}
        \dd f(\langle\theta_t,g_1\rangle,\dots,\langle\theta_t,g_p\rangle)=\nabla f\cdot \dd\Theta_t^{(p)} + \frac{1}{2}\sum_{i\neq j} \partial_{x_i, x_j}f \dd[\langle\theta_t,g_i\rangle, \langle\theta_t,g_j\rangle]_t,
    \end{equation}
    where $\Theta^{(p)}_t:=(\langle\theta_t,g_1\rangle,\dots,\langle\theta_t,g_p\rangle)$. The first term on the right-hand side is equal to 
    \begin{align}\label{eq:first_corr_term}
       \sum_{i=1}^p \partial_{x_i} f \dd \langle\theta_t,g_i\rangle=\sum_{i=1}^p\prod_{\substack{j=1,\\ j\neq i}}^p \langle\theta_t,g_j\rangle \bigl(\langle\theta_t,(\kappa+\nu)\Delta g_i\rangle \dd t-\dd\langle\nabla g_i, \theta_tV\rangle\bigr).
    \end{align}
    On the other hand, the second term on the right-hand side of \eqref{eq:Ito_for_app} is equal to 
    \begin{align}\label{eq:second_cor_term}
        &\sum_{i<j}\prod_{\substack{m=1,\\ m\neq i,j}}\langle\theta_t,g_m\rangle\dd[\langle\theta_t,g_i\rangle,\langle\theta_t,g_j\rangle]_t\nonumber\\
        &=\sum_{i<j}\prod_{\substack{m=1,\\ m\neq i,j}}\langle\theta_t,g_m\rangle\biggl(\int_{\R^{2d}}\nabla g_i(x_1)Q(x_1-x_2)\nabla g_j(x_2) \theta_t(x_{1})\theta_t(x_{2})\dd x_{1:2}\biggr)\dd t,
    \end{align}
    where we used the fact that
    \[\aligned
        &\dd[\langle\nabla g_i, \theta_tV(\dd t)\rangle, \langle\nabla g_j, \theta_tV(\dd t)\rangle]_t \\
        &=\biggl(\int_{\R^{2d}}\nabla g_i(x_1)Q(x_1-x_2)\nabla g_j(x_2) \theta_t(x_{1})\theta_t(x_{2})\dd x_{1:2}\biggr)\dd t,
    \endaligned\]
    Plugging \eqref{eq:first_corr_term} and \eqref{eq:second_cor_term} to \eqref{eq:Ito_for_app}, and then taking the expectation, shows that 
    \[
        \langle\cS_p(t,\cdot),G\rangle=\langle\theta_0^{\otimes p}, G\rangle-\int_0^t\langle\cS_p(s,\cdot), \cC_p\nabla^2G\rangle\dd s,
    \]
    where \(G(x_{1:p})=g_1(x_1)g_2(x_2)\dots g_p(x_p)\). Arguing by density (as $\theta_t$ is smooth in space by the last assertion of Theorem \ref{thm:Well_posed}) concludes the proof.
\end{proof}

The point is that the PDE \eqref{eq:p_point_corr_PDE} is well-behaved, as the next proposition shows.

\begin{proposition}\label{prop:corr_bound}
    The PDE \eqref{eq:p_point_corr_PDE} has a fundamental solution, which we denote by $G_p(t,y_{1:p}, x_{1:p})$. We also have the following heat kernel bound 
    \begin{equation}\label{eq:pth_hkernel_bound}
        G_p(t,y_{1:p},x_{1:p})\lesssim q^{\otimes p}_{ct}((x-y)_{1:p}),
    \end{equation}
    where we recall that $q_t(x)$ is the standard $d$-dimensional heat kernel and $c>0$ is a constant.
\end{proposition}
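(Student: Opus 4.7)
The plan is to view equation \eqref{eq:p_point_corr_PDE} as a uniformly parabolic PDE on $\R^{pd}$ with smooth, bounded coefficients, and then to invoke classical parabolic theory to construct a fundamental solution with Gaussian upper bounds.

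The first and main task is to verify uniform ellipticity of the principal-symbol matrix. I would consider the $pd\times pd$ block matrix $\cG_p(x_{1:p}):=(Q(x_i-x_j))_{i,j=1}^{p}$ and show it is positive semi-definite at every point: for any $\xi=(\xi_1,\dots,\xi_p)\in\R^{pd}$,
\[ \xi^{T}\cG_p\,\xi = \sum_{i,j}\xi_i\cdot Q(x_i-x_j)\xi_j = \E\Bigl|\sum_i \xi_i\cdot V(0,x_i)\Bigr|^2 \ge 0, \]
because $\cG_p$ is the covariance matrix of the centered Gaussian vector $(V(0,x_1),\dots,V(0,x_p))$. Tracking the factor of $\tfrac12$ coming from It\^o's formula in the proof of Proposition \ref{prop:Cor_PDE_derivation}, the principal-order matrix of the operator $\mathrm{Tr}(\nabla^2(\cC_p\,\cdot))$ can be rewritten in the form $\kappa\,I_{pd}+\tfrac12\cG_p(x_{1:p})$, and is therefore bounded below by $\kappa\,I_{pd}$ uniformly in $x_{1:p}$.

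Next, under Assumption \ref{Assump_compressible}, $Q$ is smooth and compactly supported, so every entry of $\cC_p$ together with its partial derivatives of any order is bounded on $\R^{pd}$. Expanding
\[ \mathrm{Tr}(\nabla^2(\cC_p u)) = \sum_{I,J}(\cC_p)_{IJ}\,\partial^2_{IJ}u + 2\sum_{I,J}\bigl(\partial_I(\cC_p)_{IJ}\bigr)\partial_J u + \sum_{I,J}\bigl(\partial^2_{IJ}(\cC_p)_{IJ}\bigr)u, \]
equation \eqref{eq:p_point_corr_PDE} becomes a non-divergence form parabolic equation on $\R^{pd}$ with smooth, bounded coefficients and uniformly elliptic principal part. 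I would then invoke standard parabolic regularity theory---for instance, Friedman's parametrix construction of the fundamental solution, or the Nash--Aronson argument applied to the dual divergence-form problem---to obtain a smooth fundamental solution $G_p(t,y_{1:p},x_{1:p})$ of \eqref{eq:p_point_corr_PDE} satisfying a Gaussian upper bound
\[ G_p(t,y_{1:p},x_{1:p}) \le C\,t^{-pd/2}\exp\!\bigl(-c\,|x_{1:p}-y_{1:p}|^2/t\bigr), \]
with $C,c>0$ depending on $p$, $\kappa$, $\nu$ and a finite number of derivatives of $Q$. Since $|x_{1:p}-y_{1:p}|^2 = \sum_{i=1}^{p}|x_i-y_i|^2$ and $t^{-pd/2}=\prod_{i}t^{-d/2}$, this factorizes as $\prod_{i} q_{c't}(x_i-y_i) = q_{c't}^{\otimes p}((x-y)_{1:p})$ after absorbing constants, giving exactly \eqref{eq:pth_hkernel_bound}.

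The hard part is the first step: the matrix $\cC_p$ as written in \eqref{eq:p_eddy_diff_matrix} need not itself be uniformly positive definite (at coinciding points $x_1=\dots=x_p$ its smallest eigenvalue equals $\kappa-\nu$, which may fail to be positive). The correct principal-part matrix, after carefully accounting for the factor of $\tfrac12$ in It\^o's formula, is $\kappa I_{pd}+\tfrac12\cG_p$, whose uniform ellipticity is automatic from the positive semi-definiteness of $\cG_p$ coming from its interpretation as the covariance matrix of the driving Gaussian field. Once this bookkeeping is done, the remaining parabolic regularity arguments are entirely classical.
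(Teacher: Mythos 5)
Your proof follows the same route as the paper's: classical non-divergence-form parabolic theory for the dual operator $\trace(\cC_p\nabla^2\cdot)$, existence of the fundamental solution via Friedman's parametrix, a Gaussian upper bound, and factorization of $Ct^{-pd/2}\e^{-|x_{1:p}-y_{1:p}|^2/(ct)}$ into $q_{c't}^{\otimes p}$. What you add, however, is genuinely valuable: the paper's proof opens with ``observe that $\cC_p$ is uniformly elliptic'' and gives no justification, and you are right that the matrix as displayed in \eqref{eq:p_eddy_diff_matrix} is \emph{not} uniformly elliptic in general --- at coinciding points $x_1=\dots=x_p$ its spectrum contains $\kappa-\nu$, which is negative when $\nu>\kappa$. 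Your repair is the correct one: carrying the factor $\tfrac12$ from It\^o's formula through the derivation of \eqref{eq:p_point_corr_PDE}, the principal part is $\kappa I_{pd}+\tfrac12\cG_p$ with $\cG_p=(Q(x_i-x_j))_{i,j=1}^p$, and $\cG_p\succeq 0$ because $Q$ is a covariance function; the cleanest way to see this here is via Lemma \ref{lem-Q-series}, which gives $\sum_{i,j}\xi_i^TQ(x_i-x_j)\xi_j=\sum_k\bigl|\sum_i\sigma_k(x_i)\cdot\xi_i\bigr|^2\ge 0$, since the expression $\E\bigl|\sum_i\xi_i\cdot V(0,x_i)\bigr|^2$ is only formal ($V$ is white in time). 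One small quibble: the adjoint of $\trace(\nabla^2(\cC_p\,\cdot))$ is in non-divergence form, so Nash--Aronson (a divergence-form tool) is not the right reference for the Gaussian bound --- the parametrix construction, which you also invoke and which the paper cites through \cite{PDE_ref} and \cite{Heat_k_ref}, is the applicable one. With that adjustment, your argument is complete and in fact more careful than the paper's.
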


\begin{proof}
    Observe that the matrix \( \cC_p(x_{1:p})\) is uniformly elliptic and, from \textbf{Assumption \ref{Assump_compressible}} it is also smooth. From standard results in parabolic PDEs, \cite{PDE_ref}, this implies the existence of \( G_p(t,y_{1:p}, x_{1:p})\). To prove \eqref{eq:pth_hkernel_bound}, we observe that the adjoint problem associated to \eqref{eq:p_point_corr_PDE}:
    \[
        \partial_t f =\trace(\cC_p\nabla^2 f),
    \]
    is a non-divergence form parabolic PDE with smooth, uniformly elliptic coefficients. Similarly to before, this PDE has a fundamental solution \(\tilde G_{p}(t,y_{1:p}, x_{1:p})\). From \cite[Remark 5.12]{Heat_k_ref},  \( \tilde G_p\) satisfies the bound \eqref{eq:pth_hkernel_bound}. By noticing that \(G_p(t,y_{1:p},x_{1:p})=\tilde G_p(t,x_{1:p},y_{1:p})\), we conclude the proof.
\end{proof}

\begin{remark}\label{rem:weaker_assum}
    This bound is used  to prove \textbf{Lemma \ref{lemm:Lp_moment_bound}}, \textbf{Lemma \ref{lemm:resc_corr_bound}} and \textbf{Proposition \ref{prop:quant_est}}. As such, it is one of the central ingredients of our arguments. We note that this proposition holds under weaker assumptions. Indeed \cite{Heat_k_ref} requires the coefficients to have finite Dini mean oscillation. On the other hand, to make sense of the equation using Kunita's theory, one convenient assumption would be to take \( Q \) to be twice continuously differentiable. As such, we expect that we can prove \textbf{Theorem \ref{thm:Main_thm}} under the latter assumption. As mentioned in \textbf{Remark \ref{rem:Q_assum}}, we do not pursue this here to avoid re-proving statements we need from \cite{Dunlap-Gu}, which are proved there under \textbf{Assumption \ref{Assump_compressible}}.
\end{remark}

With the bound \eqref{eq:pth_hkernel_bound}, one can estimate moments of the $L^2$ norm of $\theta_t$. For our purposes, we will need to estimate moments of the rescaled solutions $\theta^n$. In particular, we have the following lemma

\begin{lemma}\label{lemm:Lp_moment_bound}
   Let $\varphi\in C_c^\infty(\R^d)$, and start equation \eqref{eq:transport_Ito} with $\theta_0(x)=n^{-d}\varphi(x/n)$ as the initial data. Recall that $\theta^n(t,x)=n^d\theta(n^2t,nx)$. Then for all $r\in\N$, we have
    \[ 
    \sup_{t\in[0,T]} \E\big[\|\theta^n(t) \|_{L^2_x}^{2r} \big]^{1/2r} \lesssim\|\theta_0 \|_{L^2}.
    \]
\end{lemma}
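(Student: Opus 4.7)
The plan is to reduce moments of $\|\theta_t\|_{L^2_x}$ to the $2r$-point correlation function evaluated on a diagonal, and then exploit the heat-kernel bound \eqref{eq:pth_hkernel_bound} together with the product structure of the initial data $\varphi^{\otimes 2r}$.

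First I would write
\[
    \E\bigl[\|\theta_t\|_{L^2_x}^{2r}\bigr] = \E\Biggl[\int_{\R^{rd}} \prod_{i=1}^r \theta_t(x_i)^2 \dd x_{1:r}\Biggr] = \int_{\R^{rd}} \cS_{2r}\bigl(t, x_1, x_1, \ldots, x_r, x_r\bigr) \dd x_{1:r},
\]
using Fubini (justified since $\theta_t\in C^\infty_c$ almost surely by \textbf{Theorem \ref{thm:Well_posed}}, so all integrals are absolutely convergent) and the definition \eqref{eq:p_point_corr}. Next, by \textbf{Proposition \ref{prop:Cor_PDE_derivation}} and \textbf{Proposition \ref{prop:corr_bound}}, $\cS_{2r}$ can be represented via the fundamental solution $G_{2r}$ of the PDE \eqref{eq:p_point_corr_PDE}, with initial datum $\varphi^{\otimes 2r}$; together with the bound \eqref{eq:pth_hkernel_bound} this gives
\[
    \cS_{2r}(t, y_{1:2r}) = \int_{\R^{2rd}} G_{2r}(t, y_{1:2r}, z_{1:2r})\, \varphi^{\otimes 2r}(z_{1:2r}) \dd z_{1:2r} \lesssim \prod_{j=1}^{2r} (q_{ct} \ast |\varphi|)(y_j),
\]
where I used that the heat-kernel majorant factorises as $q_{ct}^{\otimes 2r}$ and $\varphi^{\otimes 2r}$ factorises as well.

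Specialising to the diagonal $(y_1, y_2, \ldots, y_{2r}) = (x_1, x_1, \ldots, x_r, x_r)$ collapses the product to $\prod_{i=1}^r (q_{ct}\ast|\varphi|)(x_i)^2$, so
\[
    \E\bigl[\|\theta_t\|_{L^2_x}^{2r}\bigr] \lesssim \int_{\R^{rd}} \prod_{i=1}^r (q_{ct}\ast|\varphi|)(x_i)^2 \dd x_{1:r} = \|q_{ct} \ast |\varphi|\|_{L^2_x}^{2r}.
\]
Young's inequality gives $\|q_{ct} \ast |\varphi|\|_{L^2_x} \leq \|q_{ct}\|_{L^1_x} \|\varphi\|_{L^2_x} = \|\varphi\|_{L^2_x}$, uniformly in $t\in[0,T]$, and taking $2r$-th roots concludes.

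I do not expect a serious obstacle: the only delicate point is evaluating $\cS_{2r}$ on the diagonal $(x_1, x_1, \ldots, x_r, x_r)$, which is legitimate because under \textbf{Assumption \ref{Assump_compressible}} and $\varphi\in C^\infty_c$ the solution $\theta_t$ stays in $C^\infty_c$ by \textbf{Theorem \ref{thm:Well_posed}}, so $\cS_{2r}$ is a smooth function of its $2r$ spatial arguments and the fundamental-solution formula extends continuously to coincident points. The factorisation of the majorant into a product of one-variable heat convolutions is what makes the argument work cleanly; without it one could not expect a bound of the form $\|\varphi\|_{L^2}^{2r}$.
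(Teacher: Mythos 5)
Your proposal is correct and follows essentially the same route as the paper: reduce the $2r$-th moment to the diagonal evaluation of $\cS_{2r}$, apply the fundamental-solution representation and the heat-kernel bound \eqref{eq:pth_hkernel_bound}, use the factorisation of the majorant, and conclude with $\|q_{ct}\ast|\varphi|\|_{L^2}\le\|\varphi\|_{L^2}$. The only addition is your explicit justification for evaluating $\cS_{2r}$ at coincident points, which the paper leaves implicit.
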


\begin{proof}
    Observe that the expectation in the statement of the lemma is equal to
    \[
        \int_{\R^{dr}} \E\bigg[\prod_{i=1}^{r} |\theta_t^n(x_i)|^2 \bigg] \dd x_{1:r}.
    \]
    This motivates us to consider the correlation function $\cS_{2r}(t,y_{1:2r})$. Using the fundamental solution of \eqref{eq:p_point_corr_PDE}, we can write
    \[
        \cS_{2r}(t,y_{1:2r})=\int_{\R^{2dr}} G_{2r}(t,y_{1:2r},z_{1:2r}) \theta_0^{\otimes 2r}(z_{1:2r})\dd z_{1:2r}.
    \]
    We use the bound \eqref{eq:pth_hkernel_bound} which yields
    \[
        |\cS_{2r}(t,y_{1:2r})|\lesssim\int_{\R^{2dr}} q_{ct}^{\otimes 2r}(y_{1:2r}-z_{1:2r})| \theta_0^{\otimes 2r}(z_{1:2r})|\dd z_{1:2r}.
    \]
    This holds for all $y_{1:2r}\in\R^{2dr}$. We choose
    \[
        y_1=y_2=x_1,\quad y_3=y_4=x_2,\dots, y_{2r-1}= y_{2r} = x_{r}.
    \]
    With this choice, and recalling that \(\theta_0(z)=n^{-d}\varphi(z/n)\) we have
    \[\aligned
        \E\bigg[\prod_{i=1}^{r}|\theta_t(x_i)|^2 \bigg] 
        &= |\cS_{2r}(t,y_{1:2r}) |\lesssim \prod_{i=1}^{r} q_{ct}\ast|\theta_0|(x_i)^2 \\
        &=\prod_{i=1}^{r} \bigg(\int_{\R^d}q_{ct}(x_i-nz)|\varphi(z)|\dd z \bigg)^2.
    \endaligned\]
    Taking \( t\rightarrow n^2 t\), \( x_i\rightarrow n x_i\), multiplying both sides by $n^{2dr}$, and integrating over $x_{1:r}$, yields the bound
    \[
        \E\big[\|\theta^n(t) \|_{L^2_x}^{2r} \big]=\int_{\R^{dr}} \E\bigg[\prod_{i=1}^{r} |\theta_t^n(x_i)|^2 \bigg] \dd x_{1:r}\lesssim\|\varphi\|_{L^2},
    \]
    where we also used the scaling properties of the standard heat kernel and the bound $\|q_t\ast \varphi\|_{L^2_x}\leq \|\varphi\|_{L^2_x}$. This concludes the proof.
\end{proof}

\begin{remark}\label{rem:quant_est_unscaled}
    More generally, the same argument works to bound moments of the \( L^2_x \) norm of a solution to \eqref{eq:transport_Ito} without rescaling. In particular, if \( \theta_t \) solves \eqref{eq:transport_Ito} with \( \varphi \in L^2(\R^d)\) as the initial data, then 
    \[
        \sup_{t\in[0,T]} \E\big[\|\theta(t) \|_{L^2_x}^{2r} \big]^{1/2r} \lesssim\|\theta_0 \|_{L^2}
    \]
    One could use a similar argument to derive bounds on \( \E[\| \theta_t\|_{L^p_x}^{pr}]\), for \(p,r\in\N\), but we do not pursue this here.
\end{remark}

We are also going to need a pointwise bound on the correlation function. This is given by the following lemma. 

\begin{lemma}\label{lemm:resc_corr_bound}
Under the same setting as in the previous lemma, we have
\[
    |\cS^n_{2}(t,x,y)|=|\E[\theta^n(t,x) \theta^n(t,y)]| \lesssim q_{ct+1}(x) q_{ct+1}(y).
\]
\end{lemma}

\begin{proof}
The proof is very similar to the proof of the previous lemma. We can write 
\[
    \cS_2(t,x,y)=\int_{\R^{2d}}G_2(t,x,y;z_1,z_2)n^{-d}\varphi(z_1/n)n^{-d}\varphi(z_2/n)\dd z_1 \dd z_2.
\]
From \eqref{eq:pth_hkernel_bound}, we get
\[
    |\cS_2(t,x,y)|\lesssim\int_{\R^{2d}} q_{ct}(x-z_1) q_{ct}(y-z_2)|n^{-d}\varphi(z_1/n)n^{-d}\varphi(z_2/n)| \dd z_1 \dd z_2.
\]
Since $\varphi$ is compactly supported, we have $|n^{-d}\varphi(z_1/n)|\lesssim n^{-d} q_1(z_1/n)= q_{n^2}(z_1)$. Therefore, we get the bound
\[
   |\cS_2(t,x,y)|\lesssim q_{ct+ n^2}(x) q_{ct+ n^2}(y).
\]
Again, taking $t\rightarrow n^2t$, $(x,y)\rightarrow (nx,ny)$, and multiplying both sides by $n^{2d}$ concludes the proof.
\end{proof}

\subsection{Proof of \textbf{Theorem \ref{thm:Main_thm}}: The compressible case}\label{Sec:Proof_compr}

Here, we prove \textbf{Theorem \ref{thm:Main_thm}}, under the \textbf{Assumption \ref{Assump_compressible}}. As mentioned in the introduction, the idea is the following: First, we will prove that \((\cX_n)_n\), defined in \eqref{eq:chi_def}, is tight and then we will characterize the law of all limiting points. 

Recall that $\theta$ solves equation \eqref{eq:transport_Ito} with initial condition $\theta(0,x)=n^{-d} \varphi(x/n)$; $\theta^n$, defined by $\theta^n(t,x) = n^d \theta(n^2 t, nx)$, satisfies the equation
\begin{equation}\label{eq-n}
    \partial_t\theta^n +\nabla\cdot (\theta^n V^n) = (\kappa+\nu) \Delta \theta^n,
\end{equation}
with the rescaled initial condition
\[
\theta^n(0,x)=\varphi(x),
\]
which is independent of $n$. Here $V^n$ in \eqref{eq-n} is defined by $V^n(t,x) = nV(n^2t,nx)$
and has spatial covariance function $Q^n(x)= Q(nx),\, x\in \R^d$; therefore, it has the Fourier transform
\begin{align}\label{Fourier-Q-n}
  \widehat{Q^n}(\xi)= n^{-d} \widehat{Q}(n^{-1}\xi).
\end{align}
Note that $V^n= \dot W^n$ and thus by \eqref{eq:noise-series-expansion},
\begin{align*}
  V^n(t,x)= n V(n^2 t, nx) = \sum_{k=1}^\infty \sigma_k(nx)\, n \dot B_k(n^2 t);
\end{align*}
as a result,
\begin{align}\label{eq-noise-n}
  W^n(t,x)= \sum_{k=1}^\infty \sigma_k(nx)\, n^{-1} B_k(n^2 t) \stackrel{\mathcal L}{=} \sum_{k=1}^\infty \sigma_k(nx) B_k(t)
\end{align}
by the scaling property of Brownian motions. Finally, note that $Q^n(0)= Q(0)= 2\nu I_{d}$ is independent of $n$.

To prove \textbf{Theorem \ref{thm:Main_thm}}, we first prove an estimate for the moments of negative Sobolev norms of the solution to \eqref{eq-n}. Similar estimates appear in \cite{LXZ24}, see also \cite{FGL24} for the estimates in the case of torus. The difference here is that we do not assume that the correlation function $Q$ is divergence-free. Recall that \(\bar\theta_t(x)\) solves \eqref{eq:heat} with \( \varphi \) as the initial data.

\begin{proposition}\label{prop:quant_est}
    For all $\alpha\in(\frac{d}{2},\frac{d}{2}+1)$, and all $q\in\N$, we have
    \begin{equation}\label{converg-theta-n}
            \E \Big[ \sup_{t\in [0,T]} \|\theta^n_t - \bar \theta_t\|_{\dot H^{-\alpha}_x}^{q} \Big]^{1/q}
                \lesssim_{d, \alpha, q, T} \|\varphi\|_{2} \big\| \widehat{Q} \big\|_{\infty}^{1/2} n^{-d/2},
    \end{equation}
    In particular,
     \[ 
        \E \Big[ \sup_{t\in [0,T]} \|\cX_n(t) \|_{\dot H^{-\alpha}_x}^{q} \Big]^{1/q} \lesssim_{d, \alpha, q } \|\varphi\|_{2} \big\| \widehat{Q} \big\|_{\infty}^{1/2} .
    \]
\end{proposition}

\begin{proof}
    Recall that $P_t= \e^{(\kappa+ \nu)t \Delta},\, t\geq 0$ is the heat semigroup; using the mild form of \eqref{eq-n} and of the heat equation \eqref{eq:heat}, we see that the difference $\theta_t^n - \bar \theta_t$ is nothing but the stochastic convolution
\begin{equation*}
    Z_t^n := -\sum_{k} \int_0^t P_{t-r}\nabla\cdot(\theta_r^n\sigma_k^n) \dd B_k^n(r) ,
\end{equation*}
where $\sigma^n_k(x)= \sigma_k(nx)$, $B^n_k(r)= n^{-1} B_k(n^2 r)$, $k\ge 1$
We have
  \begin{equation}\label{proof-stoch-convol-1}
  \aligned
  \big[ \E\|Z_t^n\|_{\dot H^{-\alpha}}^{2q} \big]^{1/q}
  &= \bigg[ \E \Big\| \sum_k \int_0^t P_{t-r}\nabla\cdot (\theta_r^n \sigma_k ^n) \dd B_k^n(r) \Big\|_{\dot H^{-\alpha}}^{2q} \bigg]^{1/q} \\
  &\lesssim_q \bigg[ \E \Big( \sum_k \int_0^t \big\| P_{t-r}\nabla\cdot (\theta_r ^n\sigma_k^n ) \big\|_{\dot H^{-\alpha}}^2 \dd r \Big)^q  \bigg]^{1/q} ,
  \endaligned
  \end{equation}
where in the second step we have used the Burkholder-Davis-Gundy inequality in the Hilbert space $\dot H^{-\alpha}$. Noting that
  $$\aligned
  \big\| P_{t-r}\nabla\cdot (\theta_r^n \sigma_k^n ) \big\|_{\dot H^{-\alpha}}^2
  &= \int_{\R^d} |\xi|^{-2\alpha} \big|\mathcal F\big( P_{t-r}\nabla\cdot (\theta_r^n \sigma_k^n ) \big)(\xi) \big|^2 \dd\xi \\
  &= \int_{\R^d} |\xi|^{-2\alpha} {\rm e}^{-2(\kappa+\nu) |\xi|^2(t-r) } \big|\xi\cdot \mathcal F(\theta_r^n \sigma_k^n )(\xi) \big|^2 \dd\xi \\
  &= (2\pi)^{-d} \int_{\R^d} |\xi|^{-2\alpha} {\rm e}^{-2(\kappa+\nu) |\xi|^2 (t-r)} \big|\xi\cdot \big( \widehat{\theta^n}(r)\ast \widehat{\sigma^n_k} \big)(\xi) \big|^2 \dd\xi ,
  \endaligned $$
therefore,
  $$\aligned
  &\big[ \E\|Z_t^n\|_{\dot H^{-\alpha}}^{2q} \big]^{1/q} \\
  &\lesssim_{d,q} \bigg[ \E \Big( \sum_k \int_0^t\! \int_{\R^d} |\xi|^{-2\alpha} {\rm e}^{-2(\kappa+\nu) |\xi|^2 (t-r)} \big|\xi\cdot \big( \widehat{\theta^n}(r)\ast \widehat{\sigma^n_k} \big)(\xi) \big|^2 \dd\xi \dd r \Big)^q  \bigg]^{1/q} \\
  &= \bigg[ \E \Big(  \int_{\R^d} |\xi|^{-2\alpha} \int_0^t {\rm e}^{-2(\kappa+\nu) |\xi|^2 (t-r)} \sum_k \big|\xi\cdot \big( \widehat{\theta^n}(r)\ast \widehat{\sigma^n_k} \big)(\xi) \big|^2 \dd r \dd\xi \Big)^q \bigg]^{1/q}.
  \endaligned $$
By Proposition \ref{prop:sum-fourier-sigma}, and the scaling properties of the Fourier transform, we see that 
  \begin{equation}\label{proof-stoch-convol-2}
  \begin{aligned}
      &\sum_k \big| \xi\cdot (\widehat{\theta^n}(r)\ast \widehat{\sigma^n_k})(\xi) \big|^2 \\
      &=\lim_{N\to\infty} \sum_{k=0}^N \iint_{\R^d\times\R^d} \widehat{\theta^n}(r, \xi-\eta)~ \xi^\ast \widehat{\sigma^n_k}(\eta) \overline{\widehat{\sigma^n_k} (\zeta)^\ast} \xi ~ \overline{\widehat{\theta^n}(r, \xi-\zeta)} \dd \eta \dd \zeta \\
      &= \int_{\R^d} \xi^\ast\widehat{Q^n}(\eta) \xi\, \big| \widehat{\theta^n}(r,\xi-\eta) \big|^2 \dd \eta.
      \end{aligned}
  \end{equation}
Plugging this into the inequality above, we get
  \begin{equation*}
  \big[ \E\|Z_t^n\|_{\dot H^{-\alpha}}^{2q} \big]^{1/q} \lesssim \bigg[ \E \Big( \int_{\R^d} \! |\xi|^{-2\alpha}\! \int_0^t \! {\rm e}^{-2(\kappa+\nu) |\xi|^2 (t-r)} \xi^\ast \big(|\widehat{\theta^n}(r)|^2 \ast \widehat{Q^n} \big)(\xi) \xi \dd r \dd\xi \Big)^q \bigg]^{1/q} .
  \end{equation*}
 Using Young's inequality, we get
    \[
        \big|\big(|\widehat{\theta^n}(r)|^2\ast\widehat{Q^n}\big)(\xi) \big| \leq \big\| |\widehat{\theta^n}(r)|^2 \big\|_{1} \big\|\widehat{Q^n} \big\|_{\infty} = \|\widehat{\theta^n}(r)\|_{2}^2 \big\|\widehat{Q^n} \big\|_{\infty} =\|\theta^n_r\|_{2}^2 \big\|\widehat{Q^n} \big\|_{\infty}.
    \]
    Plugging this into the previous bound yields
    \[
        \big[ \E\|Z_t^n\|_{\dot H^{-\alpha}}^{2q} \big]^{1/q} \lesssim \bigg[ \E \Big( \int_0^t\!\! \int_{\R^d} \! |\xi|^{-2\alpha+2} {\rm e}^{-2(\kappa+\nu) |\xi|^2 (t-r)}  \|\theta^n_r\|_{2}^2 \big\|\widehat{Q^n} \big\|_{\infty} \dd r\dd\xi \Big)^q  \bigg]^{1/q}.
    \]
    Now, from Minkowski's inequality, we get
    \[
        \big[ \E\|Z_t^n\|_{\dot H^{-\alpha}}^{2q} \big]^{1/q} \lesssim \big\|\widehat{Q^n} \big\|_{\infty} \int_0^t\!\! \int_{\R^d} \! |\xi|^{-2\alpha+2} {\rm e}^{-2(\kappa+\nu) |\xi|^2 (t-r)}  \E\big[\|\theta^n_r\|_{2}^{2q} \big]^{1/q} \dd r\dd\xi.
    \]
    By \textbf{Lemma \ref{lemm:Lp_moment_bound}}, we have
$$\aligned
   \big[ \E\|Z_t^n\|_{\dot H^{-\alpha}}^{2q} \big]^{1/q}
   &\lesssim \|\varphi \|_{L^2_x}^2 \big\|\widehat{Q^n} \big\|_{\infty} \int_0^t \!\! \int_{\R^d} \! |\xi|^{-2\alpha+2} {\rm e}^{-2(\kappa+\nu) |\xi|^2 (t-r)} \dd r\dd\xi \\
   &=  \|\varphi \|_{L^2_x}^2 \big\|\widehat{Q^n} \big\|_{\infty} \int_{\R^d} |\xi|^{-2\alpha} \frac{1- {\rm e}^{-2(\kappa+\nu) |\xi|^2 t}}{2(\kappa+\nu)} \dd\xi  \\
   &\le  \|\varphi \|_{L^2_x}^2 \big\|\widehat{Q^n} \big\|_{\infty} \int_{\R^d} |\xi|^{-2\alpha} \big[(2\nu)^{-1} \wedge (|\xi|^2 t) \big] \dd\xi .
  \endaligned $$
Using spherical coordinates, one has
  $$\aligned
  & \int_{\R^d} |\xi|^{-2\alpha} \big[(2\nu)^{-1}\wedge (|\xi|^2 t) \big] \dd\xi \\
  &= c_d \int_0^\infty \rho^{-2\alpha} \big[(2\nu)^{-1}\wedge (\rho^2 t) \big] \rho^{d-1} \dd\rho \\
  &\lesssim_{d, \alpha} t \int_0^{(2\nu t)^{-1/2}} \rho^{-2\alpha + d+1} \,\dd\rho + (2\nu)^{-1} \int_{(2\nu t)^{-1/2}}^\infty \rho^{-2\alpha + d-1} \dd\rho \\
  &\lesssim_{d, \alpha, \nu} t^{\alpha-d/2},
  \endaligned $$
thanks to the constraint $\alpha\in \big(\frac d2, 1+\frac d2 \big)$. Substituting this estimate into the above inequality yields
\[\big[ \E\|Z_t^n\|_{\dot H^{-\alpha}}^{q} \big]^{1/q} \leq \big[ \E\|Z_t^n\|_{\dot H^{-\alpha}}^{2q} \big]^{1/2q} \lesssim_{q,d, \alpha} \|\theta_0 \|_{L^2_x} \big\|\widehat{Q} \big\|_{\infty}^{1/2} t^{(2\alpha -d)/4}. \]

It remains to show that the above estimate can be improved by inserting $\sup_{t\in [0,T]}$ in the expectation. This can be done in the same way as the end of proof of \cite[Lemma 3.1]{LXZ24}. Hence, we omit the details here.
\end{proof}

\begin{remark}\label{rem:quant_est_divfree}
    In the case where we do not rescale the solution and the initial data, we can show the following estimate
    \begin{equation}\label{eq:quant_bound}
        \E\Big[\sup_{t\in [0,T]} \|\theta_t-\bar\theta_t\|_{\dot H^{-\alpha}_x}^{q}\Big]^{1/q} \lesssim_{d,\alpha, q, T} \|\varphi\|_{2} \big\|\widehat{Q}\big\|_{\infty}^{1/2},
    \end{equation}  
    where \( \theta_t \) solves \eqref{eq:transport} with \( \varphi \) as the initial data. Indeed, we can follow the exact same computations, with the difference that we use the bound in \textbf{Remark \ref{rem:quant_est_unscaled}} instead of \textbf{Lemma \ref{lemm:Lp_moment_bound}}.  Notice that \eqref{eq:quant_bound} has the same form as \eqref{converg-theta-n}, the only difference coming from the rescaling of the correlation function of the noise. 
    
    As mentioned before, estimate \eqref{eq:quant_bound} is very similar to the ones in \cite[Theorem 1.5]{LXZ24}, with the major technical difference being that we do not assume that \( Q \) is divergence-free. Nevertheless, a similar bound to \eqref{eq:quant_bound} holds under \textbf{Assumption \ref{assump-covariance}} as well. This can be seen by following the same arguments as in the proof of \eqref{eq:quant_bound} but using \eqref{eq:inc_Lp_est} in place of \textbf{Lemma \ref{lemm:Lp_moment_bound}}. In fact, more generally, under \textbf{Assumption \ref{assump-covariance}} we have
    \begin{equation}\label{eq:gen_quant_est_div_free}
        \E \Big[ \sup_{t\in [0,T]} \|\theta_t - \bar \theta_t\|_{\dot H^{-\alpha}_x}^{q} \Big]^{1/q} \lesssim_{d, \alpha, \nu, q, T} \|\theta_0\|_{p} \big\|\widehat{Q} \big\|_{p/(2-p)}^{1/2}, 
    \end{equation}
    for \(p\in(1,2]\), \(q\geq1\) and $\alpha\in(\frac{d}{2},\frac{d}{2}+1)$. The proof \eqref{eq:gen_quant_est_div_free} is similar to the proof of \textbf{Proposition \ref{prop:quant_est}}, and since we are not going to use this bound, we skip the details, cf. \cite[Theorem 1.5]{LXZ24}.
\end{remark}

Now, note that $\cX_n$ has the expression
\begin{equation}\label{eq:fluctuations-cX-n}
\aligned 
  \cX_n(t) &= n^{d/2} \int_0^t P_{t-r}\nabla\cdot (\theta^n_r V^n(\dd r)) \\
  &= n^{d/2} \sum_{k} \int_0^t P_{t-r} \nabla\cdot (\theta^n_r \sigma^n_k ) \dd B^n_k(r),
\endaligned 
\end{equation}
where $\sigma^n_k(x)= \sigma_k(nx)$, $B^n_k(r)= n^{-1} B_k(n^2 r)$, $k\ge 1$. We remark that for any fixed $n\ge 1$, $\{B^n_k \}_{k\ge 1}$ are mutually independent. We turn to showing that $\cX_n$ converges in the weak sense.

\begin{lemma}\label{lem-continuity}
Let $\alpha>d/2$ and $\delta>0$ be such that $\alpha-\delta>d/2$, then for any $q\in2\N$ and $0\le s<t\le T$, we have
\[ 
  \E\|\cX_n(t) -\cX_n(s)\|_{H^{-\alpha}}^q \lesssim \|\varphi \|_2^q \big\|\widehat{Q} \big\|_\infty^{q/2} |t-s|^{\delta q/2}. 
\]  
\end{lemma}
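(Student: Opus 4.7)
The plan is to write the difference as $\cX_n(t) - \cX_n(s) = I_1 + I_2$ using the semigroup identity $P_{t-r} = P_{t-s} P_{s-r}$, where
\[ I_1 := n^{d/2} \sum_k \int_s^t P_{t-r}\nabla\cdot(\theta_n(r)\sigma_k^n)\,\dd B_k^n(r), \qquad I_2 := (P_{t-s} - I)\cX_n(s), \]
and estimate the two pieces separately in $L^q(\Omega; H^{-\alpha})$. Each will contribute a factor of $|t-s|^{\delta/2}$.

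For $I_2$, the elementary inequality $|{\rm e}^{-a} - 1| \le a^{\delta/2}$ valid for $a\ge 0$ and $\delta\in [0,2]$, applied in Fourier variables with $a=(\kappa+\nu)(t-s)|\xi|^2$, immediately yields the operator bound
\[ \|(P_{t-s} - I) f\|_{H^{-\alpha}} \lesssim |t-s|^{\delta/2}\, \|f\|_{H^{-\alpha+\delta}}. \]
Since $\alpha - \delta > d/2$ by assumption, Proposition \ref{prop:quant_est} (applied with $\eps = \alpha - \delta - d/2 > 0$, after noting that $\|\cdot\|_{H^{-\beta}} \lesssim \|\cdot\|_{\dot H^{-\beta}}$ for $\beta>0$) together with the scaling computation leading to \eqref{eq:bound_difference_n} gives the uniform-in-$n$ bound $\sup_{s\in[0,T]} \E\|\cX_n(s)\|_{H^{-\alpha+\delta}}^q \lesssim \|\varphi\|_2^q \|\widehat Q\|_\infty^{q/2}$, which handles the $I_2$ contribution.

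For $I_1$, the Burkholder-Davis-Gundy inequality in the Hilbert space $H^{-\alpha}$ (valid since $q\in 2\N$) reduces matters to estimating the quadratic variation. Following the Fourier computation of Proposition \ref{prop:quant_est}, Proposition \ref{prop:sum-fourier-sigma} collapses the sum over $k$ into the convolution $|\widehat{\theta}_n|^2\ast \widehat{Q^n}$, and Young's inequality together with $\|\widehat{Q^n}\|_\infty = n^{-d}\|\widehat Q\|_\infty$ cancels the $n^d$ prefactor, leaving an expression of the form
\[ n^d \sum_k \|P_{t-r}\nabla\cdot(\theta_n(r)\sigma_k^n)\|_{H^{-\alpha}}^2 \lesssim \|\widehat Q\|_\infty \|\theta_n(r)\|_2^2 \int_{\R^d} \langle\xi\rangle^{-2\alpha} |\xi|^2\, {\rm e}^{-2(\kappa+\nu)|\xi|^2(t-r)} \dd\xi. \]
To extract the H\"older exponent I would split $|\xi|^2 = |\xi|^{2\delta}\cdot |\xi|^{2(1-\delta)}$: the factor $\langle\xi\rangle^{-2\alpha}|\xi|^{2\delta}$ is integrable precisely because $\alpha - \delta > d/2$, while the elementary bound $|\xi|^{2(1-\delta)}{\rm e}^{-2(\kappa+\nu)|\xi|^2 u} \lesssim u^{-(1-\delta)}$ produces an integrable time singularity whose integral on $[s,t]$ contributes the required factor $(t-s)^\delta$. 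Minkowski's inequality in time, combined with the uniform moment bound $\sup_{n, r} \E\|\theta_n(r)\|_2^q \lesssim \|\varphi\|_2^q$ coming from Lemma \ref{lemm:Lp_moment_bound} together with the scaling $\theta_n(t,x)=n^d\theta(n^2 t, nx)$, then closes the estimate for $\E\|I_1\|_{H^{-\alpha}}^q$.

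The main obstacle is the delicate balancing in $I_1$: one must distribute the weight between the integrable Fourier factor and the heat-kernel smoothing so that the time exponent saturates at the critical value $\alpha-d/2$, which is exactly what the constraint $\alpha-\delta>d/2$ allows. The splitting $|\xi|^2=|\xi|^{2\delta}|\xi|^{2(1-\delta)}$ is the crucial device that unlocks any $\delta$ strictly below this critical threshold, and handling both $I_1$ and $I_2$ by the same $\delta$ is what gives the clean final exponent $\delta q/2$.
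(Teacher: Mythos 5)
Your proof is correct and follows essentially the same route as the paper: the decomposition is identical (your $I_2=(P_{t-s}-I)\cX_n(s)$ is exactly the paper's stochastic integral over $[0,s]$ with kernel $P_{t-r}-P_{s-r}$), and both arguments run on the Burkholder--Davis--Gundy inequality in $H^{-\alpha}$, the collapse of the sum over $k$ via Proposition \ref{prop:sum-fourier-sigma}, Young's inequality with $\|\widehat{Q^n}\|_\infty=n^{-d}\|\widehat Q\|_\infty$, and the uniform moment bound from Lemma \ref{lemm:Lp_moment_bound}, with your splitting $|\xi|^2=|\xi|^{2\delta}|\xi|^{2(1-\delta)}$ being the Fourier-side version of the paper's semigroup smoothing estimate \eqref{eq:Sobolev_smoothing}. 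The only cosmetic difference is that you handle the $[0,s]$ piece by applying the $(P_{t-s}-I)$ bound pathwise to $\cX_n(s)$ and quoting the a priori estimate \eqref{eq:bound_difference_n} (which needs the harmless extra norm comparison $\|\cdot\|_{H^{-\alpha+\delta}}\le\|\cdot\|_{\dot H^{-d/2-\eps}}$ with $0<\eps\le\min(\alpha-\delta-d/2,1/2)$ when $\alpha-\delta\ge d/2+1$), whereas the paper re-runs the martingale computation inside the integral; both close the estimate.
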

Before moving on to the proof, we recall the following two elementary inequalities. Let \(\alpha\in\R\), \(\rho\geq0\). Then 
\begin{equation}\label{eq:Sobolev_smoothing}
    \|P_tg\|_{H^{\alpha+\rho}}\lesssim t^{-\rho/2}\|g\|_{H^\alpha},
\end{equation}
and for \(\rho\in[0,2]\)
\begin{equation}\label{eq:Sobolev_smoothing_centered}
    \|(P_t-I)g\|_{H^{\alpha-\rho}}\lesssim t^{\rho/2}\|g\|_{H^\alpha},
\end{equation}
where \( I \) denotes the identity operator.
\begin{proof}
The computations below are similar to the proof of Proposition \ref{prop:quant_est}. By \eqref{eq:fluctuations-cX-n}, we have
\[\aligned
  \cX_n(t) -\cX_n(s) &= n^{d/2} \int_0^s \sum_k (P_{t-r}- P_{s-r}) (\nabla\cdot(\sigma^n_k \theta^n_r))\dd B^n_k(r) \\
  &\quad + n^{d/2} \int_s^t \sum_k P_{t-r}(\nabla\cdot(\sigma^n_k  \theta^n_r))\dd B^n_k(r) \\
  &=: I^n_1 + I^n_2.
\endaligned \]
By Burkholder's inequality in Hilbert space $H^{-\alpha}$, \eqref{eq:Sobolev_smoothing} and \eqref{eq:Sobolev_smoothing_centered} we have
\[\aligned
  \E \|I^n_1 \|_{H^{-\alpha}}^q &\lesssim_q n^{qd/2} \E\bigg(\int_0^s \sum_k \big\|(P_{t-s}-I ) P_{s-r}(\nabla\cdot(\sigma^n_k  \theta^n_r)) \big\|_{H^{-\alpha}}^2 \dd r\bigg)^{q/2} \\
  &\lesssim n^{qd/2} \E\bigg(\int_0^s \sum_k |t-s|^\delta \big\|P_{s-r}(\nabla\cdot(\sigma^n_k  \theta^n_r)) \big\|_{H^{-\alpha+\delta}}^2 \dd r\bigg)^{q/2} \\
  &\lesssim_\nu (n^{d} |t-s|^{\delta})^{q/2} \E\bigg(\int_0^s \sum_k \frac 1{|s-r|^{1-\eps}} \big\|\nabla\cdot(\sigma^n_k  \theta^n_r) \big\|_{H^{-\alpha+\delta-1+\eps}}^2 \dd r\bigg)^{q/2} \\
  &\lesssim (n^{d} |t-s|^{\delta})^{q/2} \E\bigg(\int_0^s \frac 1{|s-r|^{1-\eps}} \sum_k\big\|\sigma^n_k \theta^n_r \big\|_{H^{-\alpha+ \delta+ \eps}}^2 \dd r\bigg)^{q/2},
\endaligned \]
where $\eps$ is small enough, and in the third step we have used the semigroup property. We have
\[\aligned
  \sum_k\big\|\sigma^n_k \theta^n_r \big\|_{H^{-\alpha+ \delta+ \eps}}^2 
  &= \sum_k \int_{\R^d} \<\xi \>^{-2(\alpha-\delta -\eps)} \big|\cF(\theta^n_r\sigma^n_k) (\xi)\big|^2 \dd\xi \\
  &=\frac1{(2\pi)^d} \int_{\R^d} \<\xi \>^{-2(\alpha-\delta -\eps)} \sum_k \big|\big( \widehat{\theta^n}(r)\ast \widehat{\sigma^n_k} \big)(\xi) \big|^2 \dd \xi.
\endaligned\]
Similarly to the proof of \eqref{proof-stoch-convol-2}, one has
\[\aligned
   \sum_k \big|\big( \widehat{\theta^n}(r)\ast \widehat{\sigma^n_k} \big)(\xi) \big|^2 
   &= \sum_k \iint_{\R^d\times \R^d} \widehat{\theta}_n(r,\xi-\eta) \overline{\widehat{\theta}_n(r,\xi-\zeta)} \widehat{\sigma^n_k}(\eta) \cdot \overline{\widehat{\sigma^n_k}(\zeta)} \dd\eta \dd\zeta \\
   &= \int_{\R^d} \big|\widehat{\theta}_n(r,\xi-\eta)\big|^2 {\rm Tr}(\widehat{Q^n}(\eta)) \dd\eta .
\endaligned \]
Therefore, 
\[\aligned
  \sum_k\big\|\sigma^n_k \theta^n_r \big\|_{H^{-\alpha+ \delta+ \eps}}^2 
  &= \frac1{(2\pi)^d} \int_{\R^d} \<\xi \>^{-2(\alpha-\delta -\eps)} \Big( \big|\widehat{\theta^n}(r) \big|^2 \ast {\rm Tr}\big(\widehat{Q^n} \big) \Big)(\xi) \dd\xi.
\endaligned\]
By Young's inequality, for any $\xi\in \R^d$, 
\[
  \Big( \big|\widehat{\theta^n}(r) \big|^2 \ast {\rm Tr}\big(\widehat{Q^n} \big) \Big)(\xi) 
  \le \Big\| \big|\widehat{\theta^n}(r) \big|^2 \Big\|_{1}  \big\|{\rm Tr}\big(\widehat{Q^n} \big) \big\|_{\infty} 
  \le \big\| \widehat{\theta^n}(r) \big\|_{2}^2 \big\| \widehat{Q^n} \big\|_{\infty}.  
\]
Recall that $\widehat{Q^n}= n^{-d} \widehat{Q}(n^{-1}\cdot)$, thus $\big\| \widehat{Q^n} \big\|_{\infty}= n^{-d} \big\| \widehat{Q} \big\|_{\infty}$; as a result,
\[
  \Big( \big|\widehat{\theta^n}(r) \big|^2 \ast {\rm Tr}\big(\widehat{Q^n} \big) \Big)(\xi) \le \big\| \widehat{\theta^n}(r) \big\|_{2}^2\, n^{-d} \big\| \widehat{Q} \big\|_{\infty} = n^{-d} \big\| \widehat{Q} \big\|_{\infty} \| \theta^n_r \|_{2}^2. 
\]
Substituting this estimate into the above equality, we arrive at
\[\aligned
  \sum_k\big\|\sigma^n_k \theta^n_r \big\|_{H^{-\alpha+ \delta+ \eps}}^2 
  &\le n^{-d} \big\| \widehat{Q} \big\|_{\infty} \| \theta^n_r \|_{2}^2 \int_{\R^d} \<\xi \>^{-2(\alpha-\delta -\eps)} \dd\xi \\
  &\lesssim n^{-d} \big\| \widehat{Q} \big\|_{\infty}  \| \theta^n_r \|_{2}^2,
\endaligned\]  
where we have used the fact that the integral is finite for $\alpha-\delta -\eps> d/2$; this is possible by taking $\eps$ small enough since $\alpha-\delta> d/2$. To sum up, we arrive at
\[\aligned
  \E \|I^n_1 \|_{H^{-\alpha}}^q &\lesssim_q (n^{d} |t-s|^{\delta})^{q/2} \E\bigg(\int_0^s \frac 1{|s-r|^{1-\eps}} n^{-d} \big\| \widehat{Q} \big\|_{\infty} \| \theta^n_r \|_{2}^2 \dd r\bigg)^{q/2} \\
  &\lesssim_q (n^{d} |t-s|^{\delta})^{q/2} \bigg(\int_0^s \frac 1{|s-r|^{1-\eps}} n^{-d} \big\| \widehat{Q} \big\|_{\infty} \E\big[ \| \theta^n_r \|_{2}^{q} \big]^{2/q} \dd r\bigg)^{q/2} \\
  &\lesssim_{\eps, T} \big\| \widehat{Q} \big\|_{\infty}^{q/2} \|\varphi \|_{2}^q |t-s|^{\delta q/2},
\endaligned \]
where we have used Minkowski's inequality and \textbf{Lemma \ref{lemm:Lp_moment_bound}}.

Next, we estimate $I^n_2$: again by Burkholder's inequality and \eqref{eq:Sobolev_smoothing},
  \[\aligned
  \E \|I^n_2 \|_{H^{-\alpha}}^q &\lesssim_q n^{dq/2} \E\bigg(\int_s^t  \sum_k \frac 1{|t-r|^{1-\delta}} \big\| \nabla\cdot(\sigma^n_k  \theta^n_r)\big\|_{H^{-\alpha-1+\delta}}^2 \dd r\bigg)^{q/2} \\
  &\lesssim n^{dq/2} \E\bigg(\int_s^t \sum_k \frac 1{|t-r|^{1-\delta}} \big\| \sigma^n_k \theta^n_r \big\|_{H^{-\alpha+\delta}}^2 \dd r\bigg)^{q/2} .
  \endaligned \]
Repeating the above calculations, we have
  \[\aligned
  \sum_k \big\| \sigma^n_k \theta^n_r \big\|_{H^{-\alpha+\delta}}^2 
  &\lesssim \int_{\R^d} \<\xi \>^{-2(\alpha-\delta)} \Big( \big|\widehat{\theta^n}(r) \big|^2 \ast {\rm Tr}\big(\widehat{Q^n} \big) \Big)(\xi) \dd\xi \\
  &\lesssim_{\alpha,\delta} n^{-d} \big\| \widehat{Q} \big\|_{\infty} \| \theta^n_r \|_{2}^2;
  \endaligned \]
as a result,
  \[\aligned
  \E \|I^n_2 \|_{H^{-\alpha}}^q &\lesssim \big\| \widehat{Q} \big\|_{\infty}^{q/2} \|\varphi \|_{2}^q\bigg(\int_s^t \frac 1{|t-r|^{1-\delta}} \E\big[\| \theta^n_r \|_{2}^{q} \big]^{2/q} \dd r\bigg)^{q/2} \\
  &\lesssim_\delta \big\| \widehat{Q} \big\|_{\infty}^{q/2} \|\varphi \|_{2}^q |t-s|^{\delta q/2}.
  \endaligned \]  
Combining the above two estimates, we finish the proof.
\end{proof}

\begin{proposition}\label{prop-tightness-X-n}
For any $\alpha>d/2$ and $\gamma\in (0,1/2)$, the laws of $\{\cX_n\}$ are tight in $C^\gamma([0,T]; H^{-\alpha}_{\rm loc})$.
\end{proposition}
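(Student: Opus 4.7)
The plan is to combine the two available moment bounds---the uniform sup-in-time bound in a stronger spatial Sobolev norm from \eqref{eq:bound_difference_n}, and the Kolmogorov-type time-increment bound of \textbf{Lemma \ref{lem-continuity}}---with a standard two-norm compactness criterion in negative Sobolev spaces. Fix the pair $(\alpha, \gamma)$ from the statement and pick $\alpha' = d/2 + \eps_0 \in (d/2, \alpha)$. From the pointwise Fourier inequality $\langle \xi \rangle^{-2\alpha'} \le |\xi|^{-2\alpha'}$ we have $\|\cdot\|_{H^{-\alpha'}_x} \le \|\cdot\|_{\dot H^{-\alpha'}_x}$, so \eqref{eq:bound_difference_n} directly yields
\[
  \sup_{n\in \N} \E\Big[\sup_{t\in [0,T]} \|\cX_n(t)\|_{H^{-\alpha'}_x}^q \Big] < \infty, \qquad q \ge 1.
\]
For the temporal regularity, I would fix $\gamma' \in (\gamma, 1/2)$ and choose an auxiliary Sobolev index $\alpha_* \ge \alpha$ together with $\delta$ satisfying $\alpha_* - \delta > d/2$ and $\delta/2 > \gamma'$; \textbf{Lemma \ref{lem-continuity}} combined with the Kolmogorov continuity theorem (applied for $q$ large) then supplies a continuous version of $\cX_n$ with $\sup_n \E\big[\|\cX_n\|_{C^{\gamma'}_t H^{-\alpha_*}}^q\big] < \infty$. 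If $\alpha_* > \alpha$, the Sobolev interpolation
\[
  \|u\|_{H^{-\alpha}} \lesssim \|u\|_{H^{-\alpha'}}^{1-\theta} \|u\|_{H^{-\alpha_*}}^\theta, \qquad \theta = \frac{\alpha-\alpha'}{\alpha_* - \alpha'},
\]
transfers the Hölder control to $H^{-\alpha}$ with exponent $\theta\gamma'$, which can be arranged strictly greater than $\gamma$ by taking $\eps_0$ small and $\alpha_*$ close enough to $\alpha$.

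With the above moment bounds, tightness follows ball-by-ball via a routine Arzel\`a--Ascoli argument. For each $R>0$, Markov's inequality concentrates the laws of $\cX_n|_{B_R}$ on bounded subsets of $L^\infty([0,T]; H^{-\alpha'}(B_R)) \cap C^{\gamma''}([0,T]; H^{-\alpha}(B_R))$ for some $\gamma'' > \gamma$, up to events of small probability. The Rellich compact embedding $H^{-\alpha'}(B_R) \hookrightarrow H^{-\alpha}(B_R)$ together with equicontinuity from the Hölder bound yields relative compactness of such sets in $C([0,T]; H^{-\alpha}(B_R))$, and Hölder interpolation between uniform convergence and the uniform $\gamma''$-Hölder bound upgrades this to relative compactness in $C^\gamma([0,T]; H^{-\alpha}(B_R))$. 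Since the topology of $H^{-\alpha}_{\rm loc}$ is the projective limit (in the seminorms $\|\cdot\|_{H^{-\alpha}(B_R)}$), a diagonal extraction across an exhaustion $B_R \uparrow \R^d$ then produces tightness of $\{\cX_n\}$ in $C^\gamma([0,T]; H^{-\alpha}_{\rm loc})$.

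The main technical point is the interpolation step: in direct application \textbf{Lemma \ref{lem-continuity}} constrains the achievable time-Hölder exponent in $H^{-\alpha}$ to be at most $(\alpha-d/2)/2$, so in the borderline regime $\gamma \ge (\alpha-d/2)/2$ one must go through the auxiliary index $\alpha_*>\alpha$ and interpolate against the sup bound from \eqref{eq:bound_difference_n} to recover the desired Hölder exponent in $H^{-\alpha}$. Apart from this routine Banach-space interpolation, the reduction to Arzel\`a--Ascoli on each ball, and thence to $H^{-\alpha}_{\rm loc}$, is entirely standard.
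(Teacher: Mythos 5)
Your overall route is the same as the paper's: the increment bound of Lemma \ref{lem-continuity} for large $q$, Kolmogorov's continuity theorem to produce a uniform moment bound on $\|\cX_n\|_{C^{\gamma'}_t H^{-\alpha_*}_x}$, and then a compactness criterion in $C^\gamma([0,T];H^{-\alpha}_{\rm loc})$. The only structural difference is that the paper outsources the last step to \cite[Corollary A.5]{GalLuo-weak}, whereas you unwind it into Rellich compactness on balls, Arzel\`a--Ascoli, H\"older interpolation of the time variable, and a diagonal extraction over an exhaustion; that part of your write-up is correct and is essentially what the cited corollary packages.

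The problem is the step you yourself single out as the main technical point: the interpolation designed to reach an arbitrary $\gamma\in(0,1/2)$ at a given $\alpha>d/2$ does not achieve what you claim. Write $A=\alpha-d/2$ and $B=\alpha_*-d/2$. Lemma \ref{lem-continuity} forces $\delta<B$, hence $\gamma'<\delta/2<B/2$, while your interpolation weight is $\theta=(\alpha-\alpha')/(\alpha_*-\alpha')=(A-\eps_0)/(B-\eps_0)$. Since $B\ge A$ one checks $(A-\eps_0)B\le A(B-\eps_0)$, so
\begin{equation*}
  \theta\gamma' \;<\; \frac{A-\eps_0}{B-\eps_0}\cdot\frac{B}{2} \;\le\; \frac{A}{2} \;=\; \frac{\alpha-d/2}{2},
\end{equation*}
for every admissible choice of $\eps_0$, $\alpha_*$, $\delta$, $\gamma'$. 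In other words, the interpolation against the sup bound \eqref{eq:bound_difference_n} buys nothing over applying Lemma \ref{lem-continuity} directly at the index $\alpha$ (which already gives any exponent below $(\alpha-d/2)/2$), and in the borderline regime $\gamma\ge(\alpha-d/2)/2$ your argument does not close. (There is also an internal inconsistency: you propose taking ``$\alpha_*$ close enough to $\alpha$'', but the constraint $\delta/2>\gamma'$ with $\alpha_*-\delta>d/2$ forces $\alpha_*>d/2+2\gamma'$, which is incompatible with $\alpha_*\approx\alpha$ precisely in the regime where the patch is needed.) This cap is not an artifact of bookkeeping: it reflects the parabolic time--space trade-off of the stochastic convolution, so no rearrangement of these two estimates can beat it. The honest conclusion of your argument (and, in fact, of the paper's own proof, whose appeal to ``the arbitrariness of $\alpha$ and $\eps$'' establishes tightness only when $\alpha>d/2+2\gamma$) is tightness for $\gamma<(\alpha-d/2)/2$, equivalently after enlarging the spatial index; this weaker range is all that is used in the remainder of the paper, so you should either restrict the statement accordingly or drop the interpolation step rather than claim it covers the full range.
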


\begin{proof}
Taking $q$ big enough in the estimate of Lemma \ref{lem-continuity}, we conclude from Kolmogorov's modification theorem that $\{\cX_n(t) \}_{0\le t\le T}$ has a version which is $\P$-a.s. $\gamma$-H\"older continuous in $H^{-\alpha}$; we still denote this version by $\cX_n$.

Moreover, we have
  \[ \sup_n \E \|\cX_n \|_{C^\gamma_t H^{-\alpha}_x}^q \lesssim C<\infty,
  \]
thus for any $\eps>0$, we deduce from \cite[Corollary A.5]{GalLuo-weak} that the laws of $\{\cX_n \}_n$ are tight in $C^\gamma([0,T]; H^{-\alpha-\eps}_{\rm loc})$. By the arbitrariness of $\alpha>d/2$ and $\eps>0$, we conclude the desired assertion. 
\end{proof}

Next, we turn to characterizing the law of all limiting points. Observe that $\cX_n$ satisfies the equation
  \[\dd \cX_n(t) = (\kappa+ \nu)\Delta \cX_n(t) \dd t - n^{d/2}\nabla \cdot( V^n(\dd t) \theta^n_t); \]
we denote the  martingale part by
  \[M_n(t) = n^{d/2} \int_0^t \nabla\cdot(\theta^n(s)V^n(\dd s)) = n^{d/2} \sum_k \int_0^t \nabla\cdot(\sigma^n_k \theta^n(s)) \dd B^n_k(s). \]
Similarly to Lemma \ref{lem-continuity}, for $\beta> 1+d/2$ and small $\delta>0$,  we can show that, for any $q\ge 1$,
  \[\E\|M_n(t)- M_n(s)\|_{H^{-\beta}}^q \lesssim \|\varphi \|_2^q \big\|\widehat Q \big\|_\infty^{q/2} |t-s|^{\delta q/2}; \]
as a result, we have the following analog of Proposition \ref{prop-tightness-X-n}.

\begin{proposition}\label{prop-tightness-M-n}
The laws of martingales $\{M_n\}_n$ are tight in $C^\gamma([0,T]; H^{-\beta}_{\rm loc})$ for any $\beta>1+d/2$ and $\gamma\in (0,1/2)$. 
\end{proposition}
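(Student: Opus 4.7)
The plan is to follow the template of \textbf{Lemma \ref{lem-continuity}} and \textbf{Proposition \ref{prop-tightness-X-n}}, which becomes simpler here because $M_n$ contains no heat-semigroup factor, so there is no need to split the time integral into two pieces or to trade regularity for time-smoothing. The target is the moment bound
\[
\E \|M_n(t)-M_n(s)\|_{H^{-\beta}}^q \lesssim \|\varphi\|_2^q\, \big\|\widehat Q \big\|_\infty^{q/2}\, |t-s|^{q/2},
\]
for all $q\in 2\N$ and $\beta>1+d/2$; plugging this into Kolmogorov's modification theorem and the compact-embedding argument already used for $\cX_n$ will then yield tightness in $C^\gamma([0,T]; H^{-\beta}_{\rm loc})$ for every $\gamma<1/2$.

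For the moment bound I would start by writing the increment as the stochastic integral
\[
M_n(t)-M_n(s) = n^{d/2} \sum_k \int_s^t \nabla\cdot(\sigma_k^n \theta_n(r))\, \dd B_k^n(r),
\]
and applying the Burkholder--Davis--Gundy inequality in the Hilbert space $H^{-\beta}$ to reduce the problem to controlling
\[
n^{d}\int_s^t \sum_k \|\nabla\cdot(\sigma_k^n \theta_n(r))\|_{H^{-\beta}}^2\, \dd r.
\]
Using $\|\nabla\cdot f\|_{H^{-\beta}} \le \|f\|_{H^{-\beta+1}}$, passing to Fourier variables, and applying \textbf{Proposition \ref{prop:sum-fourier-sigma}} exactly as in the proofs of \eqref{proof-stoch-convol-2} and \textbf{Lemma \ref{lem-continuity}}, I get
\[
\sum_k \|\sigma_k^n \theta_n(r)\|_{H^{-\beta+1}}^2 = \frac{1}{(2\pi)^d}\int_{\R^d} \langle\xi\rangle^{-2(\beta-1)} \Big(|\widehat{\theta}_n(r)|^2 \ast \trace(\widehat{Q^n})\Big)(\xi)\, \dd\xi.
\]
A single Young's-inequality step gives $\big(|\widehat\theta_n(r)|^2 \ast \trace(\widehat{Q^n})\big)(\xi) \le n^{-d}\big\|\widehat Q\big\|_\infty \|\theta_n(r)\|_2^2$, using the scaling $\big\|\widehat{Q^n}\big\|_\infty = n^{-d}\big\|\widehat Q\big\|_\infty$ from \eqref{Fourier-Q-n}. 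The remaining spatial integral $\int \langle\xi\rangle^{-2(\beta-1)}\dd\xi$ is finite precisely under the assumption $\beta > 1+d/2$, which is where this constraint enters.

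Collecting the pieces, the $n^{-d}$ cancels the prefactor $n^{d}$ from BDG, and one obtains
\[
\E\|M_n(t)-M_n(s)\|_{H^{-\beta}}^q \lesssim \big\|\widehat Q\big\|_\infty^{q/2}\, \Big(\int_s^t \E[\|\theta_n(r)\|_2^q]^{2/q}\,\dd r\Big)^{q/2},
\]
after Minkowski's inequality. A direct application of \textbf{Lemma \ref{lemm:Lp_moment_bound}} (using that $\theta_n$ is the rescaling of a solution with bounded $L^2$-initial data $\varphi$) then gives the desired $|t-s|^{q/2}$ estimate. Taking $q$ large enough in Kolmogorov's modification theorem produces a version of $M_n$ that is a.s.\ $\gamma$-H\"older in $H^{-\beta}$ for any $\gamma<1/2$, with uniform moment bound $\sup_n \E\|M_n\|_{C^\gamma_t H^{-\beta}_x}^q <\infty$. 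As in the proof of \textbf{Proposition \ref{prop-tightness-X-n}}, \cite[Corollary A.5]{GalLuo-weak} then yields tightness in $C^\gamma([0,T]; H^{-\beta-\eps}_{\rm loc})$, and the arbitrariness of $\eps>0$ together with $\beta>1+d/2$ gives the stated result.

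There is no real obstacle to this argument; the only point to monitor is that the Fourier weight $\langle\xi\rangle^{-2(\beta-1)}$ is integrable, which dictates the exponent $\beta>1+d/2$ — exactly one order of regularity worse than the bound $\alpha>d/2$ for $\cX_n$, as expected since the martingale part $M_n$ lacks the parabolic smoothing that the stochastic convolution in \eqref{eq:fluctuations-cX-n} provides.
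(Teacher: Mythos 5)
Your argument is correct and is essentially the paper's own proof, which simply asserts the increment bound ``similarly to Lemma \ref{lem-continuity}'' and then repeats the Kolmogorov/compact-embedding step of Proposition \ref{prop-tightness-X-n}. The only difference is cosmetic: since $M_n$ has no semigroup factor, you rightly obtain the full exponent $|t-s|^{q/2}$ instead of the paper's $|t-s|^{\delta q/2}$, which changes nothing in the conclusion.
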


We fix $\alpha>d/2$, $\gamma\in (0,1/2)$ and $\beta>1+d/2$ and we consider the quantity $\Xi_n= \big(\cX_n, M_n, \theta^n, \{B_k\}_k,\Psi\big)$
which takes values in the product space
  \[\mathcal S= C^\gamma([0,T]; H^{-\alpha}_{\rm loc}) \times C^\gamma([0,T]; H^{-\beta}_{\rm loc}) \times C([0,T]; H^{-\alpha}) \times C([0,T];\R)^{\N} \times L^2_{loc}([0,T]\times \R^d). \]
Combining \eqref{converg-theta-n} with Propositions \ref{prop-tightness-X-n} and \ref{prop-tightness-M-n}, we conclude that the laws $\{\mu_n \}_n$ of the family $\{\Xi_n \}_n$ are tight on $\mathcal S$. By Prokhorov's theorem, there is a subsequence, still denoted by $\{\mu_n \}_n$ for simplicity, converging weakly, in the topology of $\mathcal S$, to some limit probability measure $\mu$. Then Skorohod's representation theorem (see e.g. \cite[Chapter 3, Theorem 1.8]{Kurtz86}) implies that there exist a new (complete) probability space $\big(\tilde\Omega, \tilde\F, \tilde\P \big)$ and stochastic processes $\tilde \Xi_n= \big(\tilde \cX_n, \tilde M_n, \tilde \theta^n, \{\tilde B^n_k \}_k, \tilde\Psi_n \big)$, and a limit process $\tilde \Xi= \big(\tilde \cX, \tilde M, \tilde \theta, \{\tilde B_k \}_k, \tilde\Psi \big)$ such that 
\begin{itemize}
\item[(a)] $\tilde \Xi= \big(\tilde \cX, \tilde M, \tilde \theta, \{\tilde B_{k} \}_k, \tilde\Psi \big) \stackrel{\mathcal L}{\sim} \mu$ and $\tilde \Xi_n= \big(\tilde \cX_n, \tilde M_n, \tilde \theta^n, \{\tilde B^n_k \}_k, \tilde\Psi_n \big) \stackrel{\mathcal L}{\sim} \mu_n$  for any $n\ge1$;
\item[(b)] $\tilde\P$-a.s., $\tilde \Xi_n$ converges in the topology of $\mathcal S$ to $\tilde \Xi$ as $n\to \infty$.
\end{itemize}
For each $n$, we define the filtration $(\tilde\F^n_t)_t$ as follows: for each $t$, $\tilde\F^{0,n}_t$ is the $\sigma$-algebra generated by $\sigma\big(\tilde \theta^n(s), \{\tilde B_k^n(s)\}_k: s\le t \big)$ and by the $\tilde\P$-null sets, and we take $\tilde\F^n_t:=\cap_{t'>t} \tilde\F^{0,n}_{t'}$. We define the filtrations $(\tilde\F^0_t)_t$ and $(\tilde\F_t)_t$ for the limiting processes analogously using $\tilde \theta(s), \{\tilde B_k\}_k$. It is a standard fact (see e.g. the proof of \cite[Proposition 2.5, point (1)]{Bas2011}) that $(\tilde\F^n_t)_t$ are complete and right-continuous, $\tilde\theta^n$ are $(\tilde\F^n_t)_t$-progressively measurable and $\{\tilde B^n_k\}_k$ are independent $(\tilde\F^n_t)_t$-Brownian motions; similarly for the limiting process and filtration.

Item (a) above implies that $\tilde\theta= \bar\theta$ solves the deterministic heat equation \eqref{eq:heat}; moreover, for any $n\ge 1$, the following stochastic equation holds in the distributional sense:
\[\aligned
  \dd \tilde \cX_n(t)&= (\kappa+\nu) \Delta\tilde \cX_n(t) \dd t - \dd \tilde M_n(t) \\
  &= (\kappa+\nu) \Delta\tilde \cX_n(t) \dd t - n^{d/2} \sum_k \nabla\cdot (\sigma^n_k \tilde \theta^n(s)) \dd \tilde B^n_k(t) .
\endaligned \]
In particular, for any $\phi\in C_c^\infty(\R^d)$, $\tilde\P$-a.s. for all $t\in [0,T]$, we have
\[
  \<\tilde \cX_n(t), \phi\> = (\kappa+\nu) \int_0^t \<\tilde \cX_n(s), \Delta\phi\> \dd s - \<\tilde M_n(t), \phi\>; 
\]
by item (b) above, letting $n\to \infty$ yields
\[
  \<\tilde \cX(t), \phi\> = (\kappa+\nu) \int_0^t \<\tilde \cX(s), \Delta\phi\> \dd s - \<\tilde M(t), \phi\>. 
\]  

It remains to identify the limit object $\{\tilde M(t) \}_{t\in [0,T]}$. In particular, we need to show that \(\{\tilde M(t) \}_{t\in [0,T]}\) is a Gaussian martingale, with the correct quadratic variation. We have 

\begin{lemma}\label{lem:mart_proof}
    The process $\{\tilde M_t\}_{t\in[0,T]}$ is an $H^{-\beta}$-valued martingale w.r.t. the filtration \(\tilde \F_t\).
\end{lemma}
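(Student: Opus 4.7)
The plan is the standard Skorokhod identification: each prelimit martingale $M_n$ remains a martingale after the Skorokhod construction, and one passes to the limit using almost sure convergence together with uniform integrability supplied by the a priori moment bounds.

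First I would check that $M_n(t) = n^{d/2} \sum_k \int_0^t \nabla\cdot(\sigma^n_k \theta_n(s))\dd B^n_k(s)$ is a bona fide $H^{-\beta}$-valued martingale with respect to its natural filtration $\F^n_t = \sigma(\theta_n(r), \{B^n_k(r)\}_k : r\le t)$. The square-integrability of the integrand in $H^{-\beta}$ follows from the bound
\[
   \sum_k \|\nabla\cdot(\sigma^n_k \theta_n(r))\|_{H^{-\beta}}^2 \lesssim n^{-d} \|\widehat Q\|_\infty \|\theta_n(r)\|_2^2
\]
derived in the proof of Lemma \ref{lem-continuity}, combined with the $L^2_x$ moment control of Lemma \ref{lemm:Lp_moment_bound}. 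Since the joint law of $(\tilde M_n, \tilde\theta_n, \{\tilde B^n_k\}_k)$ equals that of $(M_n, \theta_n, \{B^n_k\}_k)$ by item (a), the process $\tilde M_n$ is an $H^{-\beta}$-valued martingale with respect to $\tilde\F^n_t := \sigma(\tilde\theta_n(r), \{\tilde B^n_k(r)\}_k : r \le t)$.

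Next I would fix $0\le s<t\le T$, $\phi\in C_c^\infty(\R^d)$, finitely many indices $k_1,\dots,k_m$, and a bounded continuous test functional $F:C([0,s];H^{-\alpha})\times C([0,s];\R^m)\to \R$. The martingale identity gives
\[
  \tilde\E\!\left[\bigl\langle \tilde M_n(t)-\tilde M_n(s),\phi\bigr\rangle\, F\bigl(\tilde\theta_n|_{[0,s]}, (\tilde B^n_{k_i})_i|_{[0,s]}\bigr)\right]=0.
\]
By item (b) the integrand converges $\tilde\P$-a.s.~to $\langle \tilde M(t)-\tilde M(s),\phi\rangle F(\tilde\theta|_{[0,s]}, (\tilde B_{k_i})_i|_{[0,s]})$. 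Boundedness of $F$ together with the uniform bound $\sup_n \tilde\E\|\tilde M_n\|_{C_t H^{-\beta}}^q<\infty$ (for some $q>1$, taken from the estimate stated right before Proposition \ref{prop-tightness-M-n}) provides uniform integrability, so the identity passes to the limit.

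Finally, a standard monotone class argument extends the resulting identity from cylindrical continuous functionals $F$ to arbitrary bounded $\tilde\F_s$-measurable random variables; since $\phi\in C_c^\infty(\R^d)$ was arbitrary and $C_c^\infty$ is dense in $H^\beta$, this yields the $H^{-\beta}$-valued martingale property $\tilde\E[\tilde M(t)\mid \tilde\F_s]=\tilde M(s)$. The main technical point I expect to verify carefully is the last monotone class step: one must check that cylindrical functionals depending on finitely many coordinates of $\{\tilde B_k\}_k$ together with the path $\tilde\theta|_{[0,s]}$ generate the full $\sigma$-algebra $\tilde\F_s$; this relies on the countability of the Brownian family and separability of the path spaces, and is the only point where the possibly infinite-dimensional nature of the filtration could cause trouble.
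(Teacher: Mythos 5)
Your proposal is correct and follows essentially the same route as the paper: transfer the martingale property of $\tilde M_n$ (equal in law to $M_n$) to the Skorokhod space, test against bounded continuous functionals of the past of $(\tilde\theta_n,\{\tilde B^n_k\}_k)$, pass to the limit using a.s.\ convergence and the uniform moment bounds, and conclude by a generating-class argument. The paper states this more tersely (testing directly against functionals on $C([0,s],H^{-\zeta})\times C([0,s],\R^{\N})$ rather than finitely many coordinates plus a monotone class step), but the substance is identical.
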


\begin{proof}
We know that $\tilde M_n$ is an $H^{-\beta}$-valued continuous martingale w.r.t. $\{\tilde \F^n_t \}_t$; thus for any $0\le s<t\le T$, $\phi\in C_c^\infty (\R^d)$, and any bounded continuous functional $G: C([0,s], H^{-\zeta})\times C([0,s], \R^{\N}) \to \R$, we have
\[
  \tilde \E\big[ \big\<\tilde M_n(t) -\tilde M_n(s), \phi \big\>\, G\big(\tilde \theta^n (\cdot), \{\tilde B^n_k(\cdot) \}_k \big) \big] = 0. 
\] 
Letting $n\to\infty$ we obtain
\[
  \tilde \E\big[ \big\<\tilde M(t) -\tilde M(s), \phi \big\>\, G\big(\tilde \theta_\cdot, \{\tilde B_k(\cdot) \}_k \big) \big] = 0. 
\]
By the arbitrariness of $0\le s< t\le T$, $\phi\in C_c^\infty (\R^d)$ and the functional $G$, we deduce that $\tilde M$ is an $H^{-\beta}$-valued martingale w.r.t. $\{\tilde \F^0_t \}_t$ and hence also w.r.t. $\{\tilde \F _t \}_t$.
\end{proof}

To show that $\tilde M$ is a Gaussian process, we need to identify its quadratic variation; in particular, we need to calculate the limit of the quadratic variation of $\langle \tilde M_n(t),\phi\rangle$, where $\phi\in C_c^\infty(\R^d)$. Note that by item (a) above, for any $\phi\in C_c^\infty(\R^d)$, we have
\[
    \tilde M_n(t) = n^{d/2} \sum_k \int_0^t \nabla\cdot(\sigma^n_k \tilde \theta^n_s) \dd\tilde B^n_k(s),
\]
where we have written $\tilde\theta^n(s)$ as $\tilde\theta^n_s$ to save space. We have

\begin{lemma}\label{lem:quad_var_lim}
    As $n\rightarrow\infty$, we have that
    \[
        [\langle \tilde M_n,\phi\rangle](t) \rightarrow\int_0^t \!\! \int_{\R^{d}} \big( q_{r}\ast \varphi(x) \big)^2 \nabla \phi(x)^\ast V_{\rm eff}^2 \nabla \phi(x) \dd x \dd r ,
    \]
    in probability. In particular, the quadratic variation of $\langle \tilde M_n(t),\phi\rangle$ converges to the quadratic variation of 
    \[
        \int_0^t\langle\nabla\phi,V_{\rm eff}\, \bar\theta_s\, \xi(\dd s) \rangle,
    \]
    where $\xi:=(\xi_1,\dots,\xi_d)$, with $\xi_i$ being the standard space time white noise on $\R^d$.
\end{lemma}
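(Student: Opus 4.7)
The plan is to compute $[\langle \tilde M_n,\phi\rangle](t)$ explicitly, extract its leading-order behaviour using the pointwise representation \eqref{eq:DG_result_compressible}, and then pass to the limit via the stationarity of $\Psi$. Integration by parts in space and the mutual orthogonality of the $\tilde B_k^n$'s give
\[
[\langle \tilde M_n,\phi\rangle](t) = n^d \int_0^t \sum_k \langle \nabla\phi,\, \sigma_k^n \tilde\theta_s^n\rangle^2\,\dd s,
\]
and Lemma \ref{lem-Q-series} collapses the $k$-sum into $\iint \nabla\phi(x)^T Q(n(x-y))\nabla\phi(y)\,\tilde\theta_s^n(x)\tilde\theta_s^n(y)\,\dd x\,\dd y$. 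The change of variables $z=n(x-y)$ then absorbs the $n^d$ prefactor, yielding
\[
[\langle \tilde M_n,\phi\rangle](t) = \int_0^t \iint_{\R^d\times\R^d} \nabla\phi(x)^T Q(z)\nabla\phi(x-z/n)\,\tilde\theta_s^n(x)\tilde\theta_s^n(x-z/n)\,\dd z\,\dd x\,\dd s.
\]

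Next I would use \eqref{eq:DG_result_compressible} to replace $\tilde\theta_s^n(x)$ by $(q_s\ast\varphi)(x)\Psi(n^2 s, nx)$, which is valid in $L^2(\tilde\P)$ uniformly in $x$ for $s\ge\eps$; the $s<\eps$ contribution is handled separately using the $L^2$-moment bound of Lemma \ref{lemm:Lp_moment_bound} and sent to zero at the end. Since $Q$ is compactly supported and $\nabla\phi,\,q_s\ast\varphi$ are smooth, one may also replace $\nabla\phi(x-z/n)$ and $(q_s\ast\varphi)(x-z/n)$ by their values at $x$ up to an $O(1/n)$ error. Taking expectations of the resulting main term and using the spatial stationarity of $\Psi$ gives $\E[\Psi(n^2 s, nx)\Psi(n^2 s, nx-z)] = \E[\Psi(0,0)\Psi(0,-z)]$, so the $z$-integral collapses to the effective covariance $V_{\rm eff}^2$ from \eqref{eq:eff_var} after the substitution $z\mapsto-z$ and the scalar identity $\nabla\phi^T Q(-z)\nabla\phi = \nabla\phi^T Q(z)\nabla\phi$ (which follows from $Q(-z)=Q(z)^T$). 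This identifies the $L^1(\tilde\P)$ limit of $[\langle \tilde M_n,\phi\rangle](t)$ with the claimed expression.

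To upgrade to convergence in probability, it suffices to show that the variance of $[\langle \tilde M_n,\phi\rangle](t)$ vanishes. After the substitution $y=nx$, this reduces to the $L^2(\tilde\P)$ ergodic-type statement that, for any compactly supported $g$,
\[
n^{-d}\int_{\R^d} g(y/n)\,\Psi(n^2 s, y)\Psi(n^2 s, y-z)\,\dd y \longrightarrow \E[\Psi(0,0)\Psi(0,-z)]\int_{\R^d} g(x)\,\dd x,
\]
which in turn follows from the decay of the connected four-point correlation of the stationary field $\Psi$ constructed in \cite{Dunlap-Gu}. Once this is in place, the identification of the limit as the quadratic variation of $\int_0^t \langle \nabla\phi,\,V_{\rm eff}\bar\theta_s\,\xi(\dd s)\rangle$ is immediate from It\^o's isometry, since $\bar\theta_s = q_s\ast\varphi$ and $\xi$ is a standard vector-valued spacetime white noise. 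The main obstacle is precisely this quantitative four-point decorrelation of $\Psi$; the remaining manipulations are bookkeeping combined with the a priori bounds of Section \ref{Sec:Cor_Fun_estimates}.
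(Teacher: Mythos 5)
Your computation of the quadratic variation, the splitting off of the $[0,\eps]$ contribution, the substitution of $\tilde\theta^n_s$ by $(q_s\ast\varphi)\,\Psi(n^2s,n\cdot)$ via \eqref{eq:DG_result_compressible}, and the identification of the mean of the main term with $\int_\eps^t\int \big(q_r\ast\varphi\big)^2\,\nabla\phi^T V_{\rm eff}^2\nabla\phi$ all match the paper's argument. The gap is in the upgrade to convergence in probability. You reduce it to the single-time spatial ergodic statement that $n^{-d}\int g(y/n)\Psi(n^2s,y)\Psi(n^2s,y-z)\,\dd y$ concentrates around $w(z)\int g$, and you assert this ``follows from the decay of the connected four-point correlation of the stationary field $\Psi$.'' That decorrelation property is neither proved in your proposal nor available from \cite{Dunlap-Gu}: the cited reference provides the two-point function $w$ and second-moment bounds, but no quantitative spatial mixing of $\Psi(0,\cdot)$ at the four-point level. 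Since this is exactly the step that makes the lemma nontrivial (and you acknowledge it as ``the main obstacle''), the proof is incomplete as written.

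It is worth noting that the paper closes this gap by a different mechanism that avoids spatial mixing altogether. It computes $\E[\cT_2^2]$ as a double time integral over $(r,s)\in[\eps,t]^2$ and shows (Lemma \ref{converg-correlation-funct}) that the rescaled four-point function $\tilde w^{(4)}_n(r,s,x_{1:4})$ factorizes into $w(x_1)w(x_3)$ for a.e.\ pair $r\neq s$. The factorization is obtained from \emph{temporal} decorrelation: one approximates the later-time pair by the field $\theta^{[n^2(r-s)]}$ built only from the noise after time $n^2(r-s)$, which is exactly independent of the earlier-time pair because the driving noise is white in time; the approximation error is controlled by the higher-moment bounds coming from the heat-kernel estimates of Proposition \ref{prop:corr_bound}. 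If you want to complete your argument, you should either prove the spatial four-point decorrelation you invoke (which seems harder and is not in the literature you cite), or switch to the paper's two-time second-moment computation, where the required independence is automatic from the structure of the noise.
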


\begin{proof}
We calculate
    \[\aligned 
  [\<\tilde M_n, \phi\>](t) &= n^d \sum_k \int_0^t \big\< \tilde \theta^n_s, \sigma^n_k \cdot\nabla \phi \big\>^2 \dd s \\
  &= n^d \sum_k \int_0^t\!\! \iint (\tilde \theta^n_s \nabla \phi)(x)^\ast \sigma^n_k(x)\otimes \sigma^n_k(y) (\tilde \theta^n_s \nabla \phi)(y) \dd x \dd y \dd s \\
  &= n^d \int_0^t\!\! \iint (\tilde \theta^n_s \nabla \phi)(x)^\ast Q^n (x-y) (\tilde \theta^n_s \nabla \phi)(y) \dd x \dd y \dd s ,
\endaligned \]
where the superscript $\ast$ means transposition of vectors and $Q^n(z)= Q(nz),\, z\in \R^d$. Writing $Q_n(z)=n^dQ(nz)= n^d Q^n(z)$ yields the expression 
\begin{equation}\label{eq-quadratic-var}
    [\langle\tilde M_n, \phi\rangle](t)=\int_0^t\!\! \int_{\R^{2d}} \tilde \theta^n_s(x)\tilde \theta^n_s (y) \nabla \phi(x)^\ast Q_n (x-y) \nabla \phi(y) \dd x \dd y \dd s.
\end{equation}
We split the time integral over the intervals $[0,\eps]$ and $[\eps,t]$, and call the corresponding terms $T_1$ and $T_2$ respectively. 

First, we show that $\tilde\E[|T_1|]=O(\eps)$. Indeed we have
\[
    \tilde\E[|T_1|]\leq\int_0^\eps\!\! \int_{\R^{2d}} \E\big[ |\tilde \theta^n_s(x)\tilde \theta^n_s (y)| \big] |\nabla \phi(x)^\ast Q_n (x-y) \nabla \phi(y)| \dd x \dd y \dd s.
\]
Now, we use item (a) and \textbf{Lemma \ref{lemm:resc_corr_bound}} to get the bound 
\[
   \tilde\E[|T_1|]\leq\int_0^\eps\!\! \int_{\R^{2d}} q_{cs+1}(x) q_{cs+1}(y) |\nabla \phi(x)^\ast Q_n (x-y) \nabla \phi(y)| \dd x \dd y \dd s=O(\eps),
\]
where we used the fact that the  integral 
\[
    \int_{\R^{2d}} |\nabla \phi(x)^\ast Q_n (x-y) \nabla \phi(y)| \dd x \dd y \leq \|Q_n \|_{L^1} \|\nabla\phi \|_{L^2} ^2 = \|Q \|_{L^1} \|\nabla\phi \|_{L^2} ^2
\]
is finite, as $n\rightarrow\infty$.

Since \(\eps\) is arbitrarily small,  we only need to handle the term $T_2$. Recall that we denote by \(\Psi(t,x)\) the space-time stationary solution to \eqref{eq:transport_Ito}, constructed in \cite{Dunlap-Gu}. We observe that, using the estimate in Lemma \ref{lemm:Lp_moment_bound},
\[
    \tilde\E \bigg[ \bigg| T_2-\int_\eps^t\!\! \int_{\R^{2d}} \tilde \theta^n_s(x) q_{s}\ast\varphi(y)\tilde\Psi_n(n^2s,ny) \nabla \phi(x)^\ast Q_n (x-y) \nabla \phi(y) \dd x \dd y \dd s \bigg| \bigg]\rightarrow0,
\]
as $n\rightarrow\infty$. Here, we made use of \eqref{eq:DG_result_compressible}. Let us denote by $\tilde T_2$ the second term in the above expression. It is easy to see that
\begin{align*}
    \tilde\E \bigg[\bigg| \tilde T_2-\int_\eps^t\!\! \int_{\R^{2d}}  q_{s}\ast\varphi(x)\tilde\Psi_n(n^2s,nx) q_{s}\ast\varphi(y)\tilde\Psi_n(n^2s,ny) \\ 
    \nabla \phi(x)^\ast Q_n (x-y) \nabla \phi(y) \dd x \dd y \dd s\bigg|\bigg] \rightarrow 0.
\end{align*}
This shows that we need to consider
\[
    \cT_2=\int_\eps^t\!\! \int_{\R^{2d}} \tilde\Psi_n^n(s,x)\tilde\Psi_n^n(s,y) q_{s}\ast\varphi(x) q_{s}\ast\varphi(y) \nabla \phi(x)^\ast Q_n (x-y) \nabla \phi(y) \dd x \dd y \dd s,
\]
where $\tilde\Psi_n^n(s,x):=\tilde\Psi_n(n^2s,nx)$. To calculate the limit, we observe that 
\[
    \tilde\E[\cT_2]=\int_\eps^t\!\! \int_{\R^{2d}} w_n(x-y) q_{s}\ast\varphi(x) q_{s}\ast\varphi(y) \nabla \phi(x)^\ast Q_n (x-y) \nabla \phi(y) \dd x \dd y \dd s,
\]
where $w_n$ is the spatial covariance function of $\tilde\Psi_n^n$ (that is, of $\Psi(n^2t,nx)$). From \cite[Corollary 3.2, Proposition 2.3]{Dunlap-Gu} we can deduce that 
\[
    \tilde\E[\cT_2]\rightarrow \int_\eps^t\!\! \int_{\R^{d}} |q_{s}\ast\varphi(x)\, V_{\rm eff} \nabla \phi(x)|^2\dd x \dd s,
\]
where $V_{\rm eff}$ is defined in \eqref{eq:eff_var}. Observe that this shows that $\tilde\E[T_2]$ converges to the same limit. 

We want to show that
\[
    \tilde\E\bigg[\bigg|\cT_2- \int_\eps^t\!\! \int_{\R^{d}} |q_{s}\ast\varphi(x) \, V_{\rm eff} \nabla \phi(x)|^2\dd x \dd s\bigg|^2 \bigg] \rightarrow0.
\]
To do this, it suffices to show that
\begin{equation}\label{T-2-square}
    \tilde\E \big[\cT_2^2 \big]\to \bigg( \int_\eps^t\!\! \int_{\R^{d}} |q_{s}\ast\varphi(x)\, V_{\rm eff} \nabla \phi(x)|^2\dd x \dd s \bigg)^2\quad \mbox{as } n\to \infty.
\end{equation}
We can calculate the second moment as follows:
\begin{align*}
    \tilde\E \big[\cT_2^2 \big] &=\int_{[\eps,t]^2} \! \int_{\R^{4d}} w^{(4)}_n(r,s, x_{1:4}) (q_{r}\ast\varphi)^{\otimes 2}(x_{1:2}) (q_{s}\ast\varphi)^{\otimes 2}(x_{3:4}) \\ 
    & \times \nabla \phi(x_1)^\ast  Q_n (x_1-x_2) \nabla \phi(x_2) \nabla\phi(x_3)^\ast Q_n (x_3-x_4) \nabla \phi(x_4) \dd x_{1:4} \dd r \dd s,
\end{align*}
where 
  \[\aligned
  w^{(4)}_n(r,s, x_{1:4}) &= \E\big[\tilde\Psi_n^n(r,x_1) \tilde\Psi_n^n(r,x_2) \tilde\Psi_n^n(s,x_3) \tilde\Psi_n^n (s,x_4) \big] . \endaligned\]
Making change of variables $x_{1:4} \to (x_2+x_1/n, x_2, x_4+ x_3/n, x_4)$, we have
\begin{align}\label{eq:aux_term}
    \tilde\E \big[\cT_2^2 \big] &=\int_{[\eps,t]^2} \! \int_{\R^{4d}} \tilde  w^{(4)}_n(r,s, x_{1:4}) q_{r}\ast \varphi(x_{2}+x_1/n) q_{r}\ast\varphi(x_{2}) q_{s}\ast \varphi(x_{4}+x_3/n) q_{s}\ast\varphi(x_{4}) \nonumber\\
    &\times \nabla \phi(x_2+x_1/n)^\ast Q(x_1)  \nabla \phi(x_2)\nabla\phi(x_4+x_3/n)^\ast Q (x_3) \nabla \phi(x_4) \dd x_{1:4} \dd r \dd s,
\end{align}  
where now
  \[\tilde  w^{(4)}_n(r,s, x_{1:4}) = \E\big[\Psi(n^2 r, nx_2+x_1) \Psi(n^2 r, nx_2) \Psi(n^2 s, nx_4+x_3) \Psi(n^2 s, nx_4) \big]. \]
 By Lemma \ref{converg-correlation-funct}, stated and proved below, and the dominated convergence theorem, taking limit $n\to \infty$ yields
  \[\aligned
  \tilde\E \big[\cT_2^2 \big] &\to \int_{[\eps,t]^2} \! \int_{\R^{4d}} w(x_1) w(x_3) \big( q_{r}\ast \varphi(x_{2}) \big)^2 \big( q_{s}\ast \varphi(x_{4}) \big)^2 \\ 
  &\quad\times \nabla \phi(x_2)^\ast Q(x_1) \nabla \phi(x_2) \, \nabla\phi(x_4)^\ast Q (x_3) \nabla \phi(x_4) \dd x_{1:4} \dd r \dd s \\
  &= \bigg[\int_{[\eps, t]}\! \int_{\R^{2d}} \big( q_{r}\ast \varphi(x_{2}) \big)^2 \nabla \phi(x_2)^\ast (wQ)(x_1) \nabla \phi(x_2) \dd x_{1:2} \dd r \bigg]^2 \\
  &= \bigg[\int_{[\eps, t]} \! \int_{\R^{d}} \big( q_{r}\ast \varphi(x_{2}) \big)^2 \nabla \phi(x_2)^\ast V_{\rm eff}^2 \nabla \phi(x_2) \dd x_{2} \dd r \bigg]^2,
  \endaligned \]
which proves \eqref{T-2-square}. This concludes the proof.
\end{proof}

Observe that \textbf{Lemma \ref{lem:quad_var_lim}} implies that the limit of the martingale $\langle \tilde M_n(\cdot),\phi\rangle$ is a Gaussian process in $t$. Furthermore, from \textbf{Theorem $8.2$} in \cite{SPDEbook}, we can find a space time white noise $\xi:=(\xi_1,\dots,\xi_d)$, defined over the same probability space, such that
\[
    \langle\tilde M_n,\phi\rangle(t)=\int_0^t\langle\nabla\phi,V_{\rm eff}\, \bar\theta_s\, \xi(\dd s) \rangle,
\]
a.s. for all $\phi\in C^\infty_c(\R^d)$. Therefore, the limiting point $\tilde\cX$ satisfies the equation 
\[
    \cX_t(\phi)=(k+\nu)\int_0^t\cX_s(\Delta\phi)ds+\int_0^t\langle\nabla\phi, V_{\rm eff}\, \bar\theta_s\, \xi(\dd s) \rangle,
\]
a.s., for all $\phi\in C_c^\infty(\R^d)$. This characterizes the law of $\cX$ in $C^\gamma([0,T],H^{-\alpha-\epsilon}_{loc})$, which is given by the law of the weak solution to the additive heat equation \eqref{eq:limit_eq}. This proves \textbf{Theorem \ref{thm:Main_thm}}, under \textbf{Assumption \ref{Assump_compressible}}.\par

We conclude this section with the statement and proof of \textbf{Lemma \ref{converg-correlation-funct}}, referenced above.

\begin{lemma}\label{converg-correlation-funct}
The function $\tilde  w^{(4)}_n(r,s, x_{1:4})$ is uniformly bounded in $n\ge 1$, $r,s\in [\eps, t]$ and $x_{1:4}\in \R^{4d}$. Moreover, for any $\eps<r\neq s <t$, it holds
  \[\tilde  w^{(4)}_n(r,s, x_{1:4}) \to w(x_1) w(x_3) \quad \mbox{as } n\to \infty. \]
\end{lemma}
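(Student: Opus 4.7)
The plan is to treat the two claims separately. For the uniform bound, I would apply the Cauchy--Schwarz inequality twice to the four-fold expectation and then exploit the space-time stationarity of $\Psi$ to obtain
\[ |\tilde w^{(4)}_n(r,s,x_{1:4})| \le \prod_{j=1}^4 \E[\Psi(0,0)^4]^{1/4} = \E[\Psi(0,0)^4], \]
which is finite and independent of $(n,r,s,x_{1:4})$. Finiteness of the fourth moment of $\Psi(0,0)$ can be established by the same scheme used in \textbf{Lemma \ref{lemm:Lp_moment_bound}}: the stationary four-point correlation function of $\Psi$ satisfies \eqref{eq:p_point_corr_PDE} with $p=4$ and is spatially constant, and by \textbf{Proposition \ref{prop:corr_bound}} it is controlled by the heat-kernel bound applied to bounded initial data (obtained by approximation of the constant function $1$, using \eqref{eq:DG_result_compressible} to identify $\Psi$ as the limit of $\theta_n$).

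For the convergence statement, I would use the space-time stationarity of $\Psi$ to shift arguments by $(-n^2 r, -n x_2)$, yielding
\[ \tilde w^{(4)}_n(r,s,x_{1:4}) = \E\big[\Psi(0,x_1)\Psi(0,0)\, \Psi(T_n, z_n+x_3)\Psi(T_n, z_n)\big], \]
where $T_n := n^2(s-r)$ and $z_n := n(x_4-x_2)$. Since $r\neq s$, we have $T_n \to \infty$. The goal is to show that the pair of values at time $0$ and the pair at time $T_n$ become asymptotically independent, so that the four-point expectation factors. Combined with the spatial stationarity identity
\[ \E[\Psi(T_n, z_n+x_3)\Psi(T_n, z_n)] = \E[\Psi(0,x_3)\Psi(0,0)] = w(x_3), \]
and the corresponding identity for the pair at time $0$, this yields $\tilde w^{(4)}_n(r,s,x_{1:4}) \to w(x_1) w(x_3)$.

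The main obstacle is establishing this time-decorrelation of $\Psi$. The natural route is via the Markov structure of \eqref{eq:transport_Ito}: conditional on $\Psi(0,\cdot)$, the field $\Psi(T_n,\cdot)$ is driven by the white noise on $(0,T_n]$, which is independent of the noise up to time $0$. The ergodicity/mixing of the Markov semigroup associated with the stationary solution, implicit in the construction in \cite{Dunlap-Gu} (it is precisely what underlies the uniqueness of the stationary $\Psi$ with $\E[\Psi]=1$), then yields the asymptotic factorization as $T_n\to\infty$. Spatial translation invariance makes the spacing $x_3$ the only relevant parameter at the late time, so the divergence of $z_n$ is harmless and no uniformity in $z_n$ is required beyond what stationarity already provides. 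Together with the uniform bound from the first paragraph, this furnishes the dominated-convergence input needed in \eqref{eq:aux_term}.
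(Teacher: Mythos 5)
Your first part (the uniform bound) is essentially the paper's argument: H\"older reduces everything to $\E[\Psi(0,0)^4]<\infty$, and this is obtained exactly as in the paper by running the correlation-function machinery of Propositions \ref{prop:Cor_PDE_derivation}--\ref{prop:corr_bound} on the solution $\theta^{[M]}$ started from the constant initial datum $1$ at time $-M$, whose $p$-point function is bounded by $\int q_{c(t+M)}^{\otimes p}=1$, and then passing to the limit $M\to\infty$. (Minor point: the relevant identification of $\Psi$ as a limit is \cite[Corollary 3.2]{Dunlap-Gu}, i.e.\ the $L^2$ convergence of $\theta^{[M]}(t,x)$ to $\Psi(t,x)$, not \eqref{eq:DG_result_compressible}.)

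The convergence part, however, has a genuine gap precisely at the point you flag as ``the main obstacle.'' You reduce, correctly, to showing asymptotic independence of the pair of values of $\Psi$ at time $0$ from the pair at time $T_n\to\infty$, but you then justify this by appealing to ``ergodicity/mixing of the Markov semigroup \ldots implicit in the construction'' and to the uniqueness of the stationary solution. Uniqueness of a stationary law does not by itself give mixing, let alone factorization of four-point functions, and no such mixing statement is proved in \cite{Dunlap-Gu} in the form you need. The paper supplies the concrete mechanism you are missing: replace the later-time pair $\Psi(0,\cdot)\Psi(0,\cdot)$ (after shifting time so that the other pair sits at the negative time $n^2(r-s)$) by $\theta^{[M]}(0,\cdot)\theta^{[M]}(0,\cdot)$ with $M=n^2(s-r)$, i.e.\ the solution restarted from the constant datum $1$ at time $n^2(r-s)$. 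This object is \emph{exactly} independent of $\Psi(n^2(r-s),\cdot)$, since the latter is measurable with respect to the noise before time $n^2(r-s)$ and the former with respect to the noise after; the cross term then vanishes identically because $\E[\Psi(\cdot,nx_2+x_1)\Psi(\cdot,nx_2)]=w(x_1)$. The replacement error is controlled in $L^2$ of the \emph{product}, which requires upgrading Dunlap--Gu's $L^2$ convergence $\theta^{[M]}\to\Psi$ to $L^4$ convergence; this upgrade is exactly where the uniform higher-moment bounds from your first part re-enter. You have all the ingredients on the table, but the decorrelation step as written is an assertion, not a proof, and the specific quantitative input (the independent coupling via $\theta^{[M]}$ plus $L^4$ convergence) needs to be made explicit.
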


\begin{remark}\label{rmk:sol_neg_time}
    In the following proof, we need to work with certain solutions to \eqref{eq:transport_Ito} defined at negative times. For this, we introduce suitable filtrations. We extend the i.i.d. Brownian motions $B_k$ to i.i.d. two-sided Brownian motions, still denoted by $B_k$ (letting $B_k(t)=B^\prime_k(-t)$ for $t<0$, where $\{B^\prime_k\}_k$ is a family of i.i.d. Brownian motions independent on $\{B_k\}_k$). For $-\infty<s\le t<\infty$, we call $\cF_{s,t}$ the $\sigma$-algebra generated by $\{B_k(r)-B_k(s)\}_{k\in \mathbb N, r\in [s,t]}$ and the $\P$-null sets; we also call $\cG_t$ the sigma-algebra generated by $\{\cF_{s,t}\}_{s\le t}$.
    Clearly $\cG_s$ and $\cF_{s,t}$ are independent for every $s<t$.

    Following \cite{Dunlap-Gu}, we consider the solution $\theta^{[M]}$ to the transport-diffusion equation \eqref{eq:transport_Ito} starting at $t=-M$ with initial data $\theta^{[M]}(-M,\cdot) \equiv 1$, more precisely $\theta^{[M]}$ is defined by
    \[ \theta^{[M]} = \E\Big[\big(\det DX^{[M]}_t \big(X^{[M],-1}_t(x) \big) \big)^{-1}\, \big|\, \sigma(\cF_{-M,r},r\ge -M) \Big] \]
    where $X^{[M]}_t$ is the stochastic flow solving
    \[ dX^{[M]}_t(x) = \sum_k \sigma_k \big(X^{[M]}_t(x) \big) \circ \dd B_k(t) +\sqrt{2\kappa}\, dW(t),\quad X^{[M]}_{-M}(x)=x, \]
    with $W$ an independent $d$-dimensional Brownian motion. Then $\theta^{[M]}_t$ is $(\cF_{-M,t})$-measurable. Moreover, by \eqref{converg-second-moment} below, the stationary field $\Psi(t,\cdot)$ is $\cG_t$-measurable.
\end{remark}

\begin{proof}[Proof of Lemma \ref{converg-correlation-funct}]
Let $\theta^{[M]}(t,x)$ be the solution to \eqref{eq:transport_Ito} starting at $t=-M$ with initial data $\theta^{[M]}(-M,\cdot) \equiv 1$, as in Remark \ref{rmk:sol_neg_time}. Then it holds (cf. \cite[Corollary 3.2]{Dunlap-Gu})
  \begin{equation} \label{converg-second-moment}
  \lim_{M\to \infty} \E\big|\theta^{[M]}(t,x) - \Psi(t,x) \big|^2 =0. 
  \end{equation}
Using Proposition \ref{prop:corr_bound}, one can prove uniform estimates on higher order moments of $\theta^{[M]}(t,x)$. Indeed, for any $p\geq 2$ with $p\in \mathbb N$, defining the correlation function
  \[
  \cS_p(t+M,x_{1:p})= \E\big[ \theta^{[M]}(t, x_1)\ldots \theta^{[M]}(t, x_p) \big],  \quad t\ge -M,
  \]
then similarly to the proof of Lemma \ref{lemm:Lp_moment_bound}, using the fact $\theta^{[M]}(-M,\cdot) \equiv 1$ and Proposition \ref{prop:corr_bound}, we have
  \[\aligned
  |\cS_p(t+M,x_{1:p})| &= \bigg| \int_{\R^{dp}} G_p(t+M, x_{1:p}, y_{1:p}) \dd y_{1:p} \bigg| \\
  &\lesssim \int_{\R^{dp}} q_{c(t+M)}^{\otimes p}((x-y)_{1:p}) \dd y_{1:p} = 1.
  \endaligned \]
In particular, for any $p$ even and $x_1= \ldots = x_p= x\in \R^d$, we obtain the moment estimates
  \[ \E\big[ \big|\theta^{[M]}(t,x)\big|^p \big] \lesssim 1 \quad \mbox{uniformly in } t\ge 0, x\in \R^d. \]
Therefore, we deduce from \eqref{converg-second-moment} that
  \begin{equation} \label{convergence-high-order-moment}
  \lim_{M\to \infty} \E\big|\theta^{[M]}(t,x) - \Psi(t,x) \big|^p =0. 
  \end{equation}
Combining this limit with the above uniform bound, we can obtain the first assertion on $\tilde  w^{(4)}_n(r,s, x_{1:4})$.

Next, assuming $\eps<r<s <t$, we have, by temporal stationarity, 
  \[\aligned
  \tilde w^{(4)}_n(r,s, x_{1:4}) &= \E\big[\Psi(n^2(r-s), nx_2+ x_1) \Psi(n^2(r-s), nx_2)\Psi(0, nx_4+x_3) \Psi(0, nx_4) \big];
  \endaligned\]
moreover, by spatial stationarity, it holds
  \[ \aligned
  & \E\big[\Psi(n^2(r-s), nx_2+ x_1) \Psi(n^2(r-s), nx_2) \big] = w(x_1), \\
  & \E[\Psi(0, nx_4+x_3) \Psi(0, nx_4)] = w(x_3) .
  \endaligned \]
We have
  \[\aligned
  &\tilde w^{(4)}_n(r,s, x_{1:4}) - w(x_1) w(x_3) \\ 
  &= \E\big[ \big(\Psi(n^2(r-s), nx_2+ x_1) \Psi(n^2(r-s), nx_2) -w(x_1) \big) \\
  &\qquad \times \big(\Psi(0, nx_4+x_3) \Psi(0, nx_4) -w(x_3) \big) \big]\\
  &= \E\big[ \big(\Psi(n^2(r-s), nx_2+ x_1) \Psi(n^2(r-s), nx_2) - w(x_1) \big) \\
  &\qquad \times \big(\Psi(0, nx_4+x_3) \Psi(0, nx_4) - \theta^{[n^2(r-s)]}(0, nx_4+ x_3) \theta^{[n^2(r-s)]}(0, nx_4) \big) \big] \\ 
  &+ \E\big[ \big(\Psi(n^2(r-s), nx_2+ x_1) \Psi(n^2(r-s), nx_2) - w(x_1) \big) \\
  &\qquad \times \big(\theta^{[n^2(r-s)]}(0, nx_4+ x_3) \theta^{[n^2(r-s)]}(0, nx_4) -w(x_3) \big) \big] .
  \endaligned \]
Note that by construction, the stationary field $\Psi(n^2(r-s), x)$ depends on information of the driving field $V(t,\cdot)$ for $t<n^2(r-s) <0$ (precisely, it is $\cG_{n^2(r-s)}$-measurable), while $\theta^{[n^2(r-s)]}(0, x)$ depends on the information of $V(t,x)$  for $t\geq n^2(r-s)$ (precisely, it is $\cF_{n^2(r-s),0}$-measurable); thus, they are independent. As a result, the second expectation is equal to
  \[\aligned 
  &\E\big[ \Psi(n^2(r-s), nx_2+ x_1) \Psi(n^2(r-s), nx_2) - w(x_1) \big] \\
  &\quad \times \E\big[ \theta^{[n^2(r-s)]}(0, nx_4+ x_3) \theta^{[n^2(r-s)]}(0, nx_4) -w(x_3) \big] =0.
  \endaligned \]
Concerning the first expectation, we have, by spatial stationarity,
  \[\aligned
  & \E\big| \Psi(0, nx_4+x_3) - \theta^{[n^2(r-s)]}(0, nx_4+ x_3) \big|^4 \\ 
  &= \E\big| \Psi(0, 0) - \theta^{[n^2(r-s)]}(0, 0) \big|^4 \to 0 \quad \mbox{as } n\to \infty  , 
  \endaligned \]
where the last step is due to \eqref{convergence-high-order-moment}; similarly, as $n\to \infty$,
  \[\aligned
  & \E\big| \Psi(0, nx_4) - \theta^{[n^2(r-s)]}(0, nx_4) \big|^4  = \E\big| \Psi(0, 0) - \theta^{[n^2(r-s)]}(0, 0) \big|^4 \to 0 .
  \endaligned \]  
Therefore, by H\"older's inequality, one can show that the first expectation on the right-hand side of $\tilde w^{(4)}_n(r,s, x_{1:4}) - w(x_1) w(x_3)$ also vanishes. 

Finally, the case $\varepsilon <s<r<t$ can be treated in the same way; as a consequence, we deduce that, for all $r\neq s$, 
  \[\tilde w^{(4)}_n(r,s, x_{1:4}) - w(x_1) w(x_3) \to 0 \]
as $n\to \infty$. 
\end{proof}

\subsection{Proof of \textbf{Theorem \ref{thm:Main_thm}}: The incompressible case}\label{Sec:Proof_inc_compr} 
Here we prove \textbf{Theorem \ref{thm:Main_thm}}, under \textbf{Assumption \ref{assump-covariance}}; in this case, $V_{\rm eff}^2= g(0)\Pi$, where the function $g$ is given in \eqref{covar-Fourier} and $\Pi$ is the Helmholtz-Leray projection.  

We follow the same steps as before. As mentioned in \textbf{Remark \ref{rem:quant_est_divfree}}, \textbf{Proposition \ref{prop:quant_est}} holds in this case as well. As such, the first steps in the previous section can be followed verbatim. In particular, \textbf{Proposition \ref{prop-tightness-X-n}} and \textbf{Proposition \ref{prop-tightness-M-n}} hold under \textbf{Assumption \ref{assump-covariance}} as well. As such, it remains to  control the martingales
\[
    \tilde M_n(t) = n^{d/2} \sum_k \int_0^t \nabla\cdot(\sigma^n_k \tilde \theta^n_s) \dd\tilde B^n_k(s)=n^{d/2} \sum_k \int_0^t (\sigma^n_k  \cdot\nabla\tilde \theta^n_s) \dd\tilde B^n_k(s),
\]
where we have already used Skorohod's representation theorem, and the fact that \(\sigma^n_k(x)= \sigma_k(nx)\) is divergence-free. We still have \eqref{eq-quadratic-var}, which can be written in a more compact form:
\begin{equation}\label{eq:aux_eq}
    [\<\tilde M_n, \phi\> ]_t= \int_0^t \big\<\tilde \theta^n_s \nabla \phi, Q_n\ast (\tilde \theta^n_s \nabla \phi) \big\> \dd s,
\end{equation}
where $Q_n(x)= n^d Q(nx)$, $ x\in \R^d$. We wish to show that 
\begin{equation}\label{eq:wish_conc}
    [\<\tilde M_n, \phi\> ]_t\rightarrow\int_0^t\!\! \int_{\R^{d}} (\theta_s \nabla \phi)(x)^\ast V_{\rm eff}^2 (\theta_s \nabla \phi)(x) \dd x \dd s,
\end{equation}
in probability, where now, \(V_{\rm eff}^2= g(0)\Pi \). 

We split the right-hand side of \eqref{eq:aux_eq} as follows:
\begin{align}\label{eq:splitting_quadratic_var}
  \aligned 
  [\<\tilde M_n, \phi\> ]_t 
  &= \int_0^t \big\<(\tilde \theta^n_s -\bar \theta_s) \nabla \phi, Q_n\ast (\tilde \theta^n_s \nabla \phi) \big\> \dd s 
  + \int_0^t \big\< \bar \theta_s \nabla \phi, Q_n\ast (\tilde \theta^n_s \nabla \phi) \big\> \dd s \\
  &= \int_0^t \big\<(\tilde \theta^n_s -\bar \theta_s) \nabla \phi, Q_n\ast ((\tilde \theta^n_s -\bar \theta_s) \nabla \phi) \big\> \dd s \\ 
  &\quad + \int_0^t \big\<(\tilde \theta^n_s -\bar \theta_s) \nabla \phi, Q_n\ast (\bar \theta_s \nabla \phi) \big\> \dd s \\
  &\quad  + \int_0^t \big\< \bar \theta_s \nabla \phi, Q_n\ast ((\tilde \theta^n_s- \bar \theta_s) \nabla \phi) \big\> \dd s \\
  &\quad + \int_0^t \big\< \bar \theta_s \nabla \phi, Q_n\ast ( \bar \theta_s \nabla \phi) \big\> \dd s .
  \endaligned
  \end{align}
We denote the last four terms by $J_i,\, i=1,2,3,4$. First, one has
  \[ J_4= \int_0^t \big\< \bar\theta_s \nabla \phi, Q_n\ast ( \bar\theta_s \nabla \phi) - V_{\rm eff}^2( \bar\theta_s \nabla \phi) \big\> \dd s + \int_0^t \big\< \bar \theta_s \nabla \phi, V_{\rm eff}^2( \bar\theta_s \nabla \phi) \big\> \dd s. \]
The first part of $J_4$ can be estimated as
  \[\aligned 
  &\int_0^t \big\| \bar\theta_s \nabla \phi \big\|_2 \big\| Q_n\ast ( \bar\theta_s \nabla \phi) - V_{\rm eff}^2( \bar\theta_s \nabla \phi) \big\|_2 \dd s \\ &\le \|\varphi \|_2 \|\nabla\phi \|_\infty \int_0^t \big\| Q_n\ast ( \bar\theta_s \nabla \phi) - V_{\rm eff}^2( \bar\theta_s \nabla \phi) \big\|_2 \dd s .
  \endaligned \]
By the Plancherel identity, 
  \[\aligned
  \big\| Q_n\ast ( \bar\theta_s \nabla \phi) - V_{\rm eff}^2( \bar\theta_s \nabla \phi) \big\|_2^2 &= \int_{\R^d} \big|\widehat{Q}_n \F(\bar\theta_s \nabla \phi) - g(0) P_\xi \F(\bar\theta_s \nabla \phi)\big|^2\dd\xi\\
  &= \int_{\R^d} \big|\big(g(n^{-1}\xi) -g(0) \big) P_\xi \F(\bar\theta_s \nabla \phi)\big|^2\dd\xi ,
  \endaligned \]
where we have used $\widehat{Q}_n(\xi)= \widehat{Q}(n^{-1}\xi)= g(n^{-1}\xi) P_\xi$ by \eqref{covar-Fourier}. Recall that $g\in C_b(\R^d)$; for any $s\in [0,t]$ and $\xi\ne 0$, it is clear that the integrand vanishes as $n\to\infty$, thus by the dominated convergence theorem, one has
  \[\lim_{n\to\infty} \big\| Q_n\ast ( \bar\theta_s \nabla \phi) - V_{\rm eff}^2( \bar\theta_s \nabla \phi) \big\|_2=0. \]
Therefore, the first term of $J_4$ vanishes as $n\to \infty$.

Next, for $J_2$, in the same way we have
  \[\aligned J_2 &= \int_0^t \big\<(\tilde \theta^n_s -\bar\theta_s) \nabla \phi, Q_n\ast (\bar\theta_s \nabla \phi)- V_{\rm eff}^2( \bar\theta_s \nabla \phi) \big\> \dd s \\ 
  &\quad + \int_0^t \big\<(\tilde \theta^n_s -\bar\theta_s) \nabla \phi, V_{\rm eff}^2( \bar\theta_s \nabla \phi) \big\> \dd s \endaligned\]
which will be denoted as $J_{2,1}$ and $J_{2,2}$. We can regard $ \nabla\phi\cdot V_{\rm eff}^2( \bar\theta_s \nabla \phi)$ as a test function ($\bar\theta_s$ is smooth since it solves the heat equation \eqref{eq:heat}), thus by item (b) above, it is clear that $J_{2,2}$ tends to 0 as $n\to \infty$. Concerning $J_{2,1}$, we have
  \[\aligned 
  |J_{2,1}| &\le \int_0^t \big\|(\tilde \theta^n_s - \bar \theta_s) \nabla \phi \big\|_2  \big\| Q_n\ast (\bar\theta_s \nabla \phi)- V_{\rm eff}^2( \bar\theta_s \nabla \phi) \big\|_2  \dd s \\
  &\le \|\nabla\phi \|_\infty \int_0^t \big( \|\tilde \theta^n_s \|_2  +\| \bar\theta_s\|_2 \big)  \big\| Q_n\ast (\bar\theta_s \nabla \phi)- V_{\rm eff}^2( \bar\theta_s \nabla \phi) \big\|_2  \dd s \\
  &\le 2 \|\nabla\phi \|_\infty \| \varphi \|_2 \int_0^t \big\| Q_n\ast (\bar\theta_s \nabla \phi)- V_{\rm eff}^2( \bar\theta_s \nabla \phi) \big\|_2  \dd s
  \endaligned \]
which, similarly to the treatment of the first term in $J_4$, also vanishes as $n\to \infty$. In the same way, we can show that $J_3 \to 0$. 

Finally, to handle the term $J_1$, we first prove the following lemma.

\begin{lemma}\label{lem-L-2-convergence}
Let $\kappa>0$, $\theta^n$ be solution to 
  \[\dd \theta^n+ \circ \dd W^n \cdot \nabla \theta^n = \kappa \Delta \theta^n \dd t, \quad \theta^n_0= \theta_0\]
and $\bar \theta$ the solution to 
  \[\partial_t \bar \theta = (\kappa+\nu) \Delta \bar \theta, \quad \bar \theta_0= \theta_0.\]
Then one has
  \[ \E \int_0^T \|\theta^n_t -\bar \theta_t \|_2^2 \dd t \lesssim_T \|\theta_0 \|_2^{2} \big\| \widehat{Q} \big\|_\infty^{1-\delta} n^{-d(1-\delta)} , \]
where $\delta= (d+2\eps)/(2+d+2\eps)$.
\end{lemma}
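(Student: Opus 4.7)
The plan is to interpolate between the negative-order Sobolev bound on $\theta^n-\bar\theta$ provided by \textbf{Proposition \ref{prop:quant_est}} (which remains valid in the present incompressible setting by \textbf{Remark \ref{rem:quant_est_divfree}}) and a positive-order $\dot H^1$-type control coming from the energy estimate \eqref{eq:energy_est}. Concretely, I would choose $\alpha = d/2+\eps \in (d/2,\, d/2+1)$ and apply \textbf{Proposition \ref{prop:quant_est}} with $q=2$, together with the scaling $\|\widehat{Q^n}\|_\infty = n^{-d}\|\widehat{Q}\|_\infty$, to get
\[
\E \Big[\sup_{t\in [0,T]}\|\theta^n_t-\bar\theta_t\|_{\dot H^{-d/2-\eps}}^{2}\Big] \lesssim_T \|\theta_0\|_2^2 \big\|\widehat{Q}\big\|_\infty n^{-d}.
\]
In parallel, \eqref{eq:energy_est} applied to $\theta^n$ and the classical $L^2$ energy identity for $\bar\theta$, combined with the triangle inequality, yield
\[
\E \int_0^T \|\nabla(\theta^n_t-\bar\theta_t)\|_2^2 \dd t \lesssim_\kappa \|\theta_0\|_2^2.
\]

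The heart of the argument is then the elementary Fourier-side interpolation
\[
\|f\|_{L^2} \le \|f\|_{\dot H^{-\alpha}}^{1-\delta}\, \|\nabla f\|_{L^2}^{\delta},\qquad \delta = \frac{\alpha}{\alpha+1} = \frac{d+2\eps}{2+d+2\eps},
\]
proved in one line from Plancherel and H\"older with exponents $\tfrac1{1-\delta},\,\tfrac1\delta$; note that this value of $\delta$ matches exactly the one in the statement. Applying the inequality with $f = \theta^n_t-\bar\theta_t$, squaring, integrating in $t$, and using H\"older in time with the same conjugate exponents yields
\[
\int_0^T \|\theta^n_t-\bar\theta_t\|_2^2 \dd t \le T^{1-\delta} \sup_{t\in [0,T]}\|\theta^n_t-\bar\theta_t\|_{\dot H^{-\alpha}}^{2(1-\delta)} \bigg(\int_0^T \|\nabla(\theta^n_t-\bar\theta_t)\|_2^2 \dd t \bigg)^{\delta}.
\]

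Taking $\E$ and applying H\"older in $\omega$ one more time with the same conjugate exponents reduces the right-hand side to the product of the two a priori estimates above, raised to the powers $1-\delta$ and $\delta$ respectively. The exponents of $\|\theta_0\|_2$ combine as $2(1-\delta)+2\delta = 2$, while the factor $\|\widehat{Q}\|_\infty n^{-d}$ enters only through the negative-Sobolev estimate and hence with exponent $1-\delta$, producing exactly the bound asserted. I do not expect a genuine obstacle here: both a priori ingredients are already in hand and the argument is pure interpolation plus H\"older. The only delicate point is the bookkeeping, namely verifying that $\alpha = d/2+\eps$ is admissible in \textbf{Proposition \ref{prop:quant_est}} (which requires $\alpha\in(d/2,d/2+1)$, so any $\eps\in(0,1)$ works) and that the arithmetic of the conjugate exponents lands on the stated value of $\delta$.
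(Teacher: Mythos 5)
Your proposal is correct and follows essentially the same route as the paper: the negative-Sobolev bound from Proposition \ref{prop:quant_est} with the scaling $\|\widehat{Q^n}\|_\infty = n^{-d}\|\widehat Q\|_\infty$, the $\dot H^1$ control from the energy estimates for $\theta^n$ and $\bar\theta$, the interpolation $\|f\|_2 \le \|f\|_{\dot H^{-\alpha}}^{1-\delta}\|f\|_{\dot H^1}^{\delta}$ with $\delta=\alpha/(1+\alpha)$, and H\"older to split the expectation. The only cosmetic difference is that you apply H\"older separately in time and in $\omega$ (pulling out the sup in time), whereas the paper applies it once on the product measure; both yield the stated bound.
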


\begin{proof}
By \textbf{Proposition \ref{prop:quant_est}} (and \textbf{Remark \ref{rem:quant_est_divfree}}), for some $\eps\in (0,1)$, we have
  \[ \E\|\theta^n_t -\bar \theta_t \|_{\dot H^{-d/2-\eps}}^2 \lesssim_T \|\theta_0\|_2^2 \big\| \widehat{Q} \big\|_\infty n^{-d} \]
for all $t\in [0,T]$. Note that $\theta^n$ and $\bar \theta$ satisfies the following energy estimates:
  \[\aligned
  \P \mbox{-a.s.}, \quad \|\theta^n_t \|_2^2 + 2\kappa \int_0^t \|\nabla \theta^n_s\|_2^2 \dd s & \le 2\|\theta_0 \|_2^2, \\
  \|\bar \theta_t \|_2^2 + 2(\kappa +\nu) \int_0^t \|\nabla \bar \theta_s\|_2^2 \dd s &= \|\theta_0 \|_2^2.
  \endaligned \]
The first estimate is a consequence of \eqref{eq:energy_est}, since \(\theta^n\) satisfies  \eqref{eq:transport_Ito} with a rescaled noise term. We can get the second energy estimate by integrating by parts. Combining these estimates yields
  \[ \P \mbox{-a.s.}, \quad \int_0^T \|\theta^n_t -\bar \theta_t\|_{\dot H^1}^2 \dd t \lesssim_{\kappa,T} \|\theta_0 \|_2^2. \]
By interpolation, $\|\theta^n_t -\bar \theta_t\|_2 \le \|\theta^n_t -\bar \theta_t\|_{\dot H^1}^\delta \|\theta^n_t -\bar \theta_t \|_{\dot H^{-d/2-\eps}}^{1-\delta} $, where $\delta= (d+2\eps)/(2+d+2\eps)$. As a result, by Cauchy's inequality,
  \[\aligned
  \E \int_0^T \|\theta^n_t -\bar \theta_t \|_2^2 \dd t & \le \E \int_0^T \|\theta^n_t -\bar \theta_t\|_{\dot H^1}^{2\delta} \|\theta^n_t -\bar \theta_t \|_{\dot H^{-d/2-\eps}}^{2(1-\delta)} \dd t \\
  &\le \bigg[\E \int_0^T \|\theta^n_t -\bar \theta_t\|_{\dot H^1}^{2} \dd t \bigg]^\delta \bigg[\E \int_0^T \|\theta^n_t -\bar \theta_t\|_{\dot H^{-d/2-\eps}}^{2} \dd t \bigg]^{1-\delta} \\
  &\lesssim_T \|\theta_0 \|_2^{2} \big\| \widehat{Q} \big\|_\infty^{1-\delta} n^{-d(1-\delta)} .
  \endaligned\]
\end{proof}

Now we can estimate $J_1$ as follows:
  \[ \aligned
  \tilde \E|J_1| &\le \tilde \E \int_0^T \big\|(\tilde \theta^n_s - \bar \theta_s)\nabla\phi \big\|_2 \big\|Q_n \ast ((\tilde \theta^n_s - \bar \theta_s)\nabla\phi )\big\|_2 \dd s; 
  \endaligned \]
by Plancherel's identity and \eqref{covar-Fourier},
  \[\aligned
  \big\|Q_n \ast ((\tilde \theta^n_s - \bar \theta_s)\nabla\phi )\big\|_2^2 &= \int_{\R^d} \big|\widehat{Q}_n(\xi) \F((\tilde \theta^n_s - \bar \theta_s)\nabla\phi )(\xi)\big|^2 \dd\xi \\
  &\le \|g\|_\infty \int_{\R^d} \big| \F((\tilde \theta^n_s - \bar \theta_s)\nabla\phi )(\xi)\big|^2 \dd\xi \\
  &= \|g\|_\infty \big\|(\tilde \theta^n_s - \bar \theta_s)\nabla\phi \big\|_2^2 .
  \endaligned\]
Plugging this estimate in the above inequality yields
  \[ \aligned
  \tilde \E|J_1| &\le \|g\|_\infty \tilde \E\! \int_0^T \! \big\|(\tilde \theta^n_s - \bar \theta_s)\nabla\phi \big\|_2^2 \dd s \le \|g\|_\infty \|\nabla\phi \|_\infty \tilde\E\! \int_0^T \! \big\|\tilde \theta^n_s - \bar \theta_s \big\|_2^2 \dd s .
  \endaligned \]
By Lemma \ref{lem-L-2-convergence} and the fact that $\tilde \theta^n$ has the same law as $\theta^n$, we conclude that the last quantity vanishes as $n\to \infty$.\par

This shows \eqref{eq:wish_conc}. We can continue in the same way as in the previous section (after the proof of \textbf{Lemma \ref{lem:quad_var_lim}}) and conclude the proof of \textbf{Theorem \ref{thm:Main_thm}} under \textbf{Assumption \ref{assump-covariance}} as well.

\bigskip

\noindent \textbf{Acknowledgements.} The first author acknowledges the partial support of the project \emph{Noise in fluid dynamics and related models}, funded by the Italian Ministry of University and Research through the project PRIN call 2022 - grant 20222YRYSP. The second author is grateful to the financial supports of the National Key R\&D Program of China (No. 2024YFA1012300) and the National Natural Science Foundation of China (Nos. 12090010, 12090014).  The third author acknowledges the partial support of the project \emph{Convergence and Stability of Reaction and Interaction Network Dynamics}, funded by the Italian Ministry of University and Research through the project PRIN call 2022 - grant 2022XRWY7W.

\bigskip

\end{document}